\newcommand{\bA}{{\mathbb{A}}}
\newcommand{\bC}{{\mathbb{C}}}
\newcommand{\bF}{{\mathbb{F}}}
\newcommand{\bH}{{\mathbb{H}}}
\newcommand{\bN}{{\mathbb{N}}}
\newcommand{\bP}{{\mathbb{P}}}
\newcommand{\bQ}{{\mathbb{Q}}}
\newcommand{\bR}{{\mathbb{R}}}
\newcommand{\bZ}{{\mathbb{Z}}}
\newcommand{\Bw}{{\mathbf{w}}}
\newcommand{\Bx}{{\mathbf{x}}}
\newcommand{\By}{{\mathbf{y}}}
\newcommand{\Bz}{{\mathbf{z}}}
\newcommand{\BB}{\mathbf{B}}
  \newcommand{\B}{{\mathcal{B}}}
  \newcommand{\C}{{\mathcal{C}}}
  \newcommand{\D}{{\mathcal{D}}}
  \newcommand{\E}{{\mathcal{E}}}
  \newcommand{\F}{{\mathcal{F}}}
  \newcommand{\G}{{\mathcal{G}}}
  \newcommand{\M}{{\mathcal{M}}}
\renewcommand{\P}{{\mathcal{P}}}
  \newcommand{\Q}{{\mathcal{Q}}}
  \newcommand{\R}{{\mathcal{R}}}
\renewcommand{\S}{{\mathcal{S}}}
  \newcommand{\Y}{{\mathcal{Y}}}
\newcommand{\fm}{\mathfrak{m}}
\newcommand{\fd}{\mathfrak{d}}
\newcommand{\Gal}{\operatorname{Gal}}
\newcommand{\sing}{\operatorname{sing}}
\newcommand{\ff}{\mathfrak{f}}
\newcommand{\fg}{\mathfrak{g}}
\newcommand{\ep}{\varepsilon}
\newcommand{\bbeta}{\boldsymbol \beta}
\newcommand{\balpha}{\boldsymbol \alpha}
\newcommand{\ol}{\overline}
\renewcommand{\phi}{\varphi}
\newcommand{\upchi}{{\raise.35ex\hbox{$\chi$}}}
\newtheorem{theorem}{Theorem}[section]
\newtheorem{proposition}[theorem]{Proposition}
\newtheorem{lemma}[theorem]{Lemma}
\theoremstyle{definition}
\newtheorem{definition}[theorem]{Definition}
\newtheorem{remark}[theorem]{Remark}
\numberwithin{equation}{section}
\begin{document}

\title{Power-free values of binary forms and the global determinant method}
\author{Stanley Yao Xiao}
\address{University of Waterloo, Dept.~of Pure Mathematics, Waterloo, ON, N2L 3G1, Canada}
\email{y28xiao@uwaterloo.ca}
\subjclass[2010]{Primary 11N32, Secondary 11D45}%
\keywords{determinant method, powerfree values, binary forms}%
\date{\today}


\begin{abstract} We give an improved estimate for the density of $k$-free values of integral binary forms with no fixed $k$-th power divisor. Further, we give the corresponding improvement to a theorem of Stewart and Top on the number of power-free values in an interval that may be assumed by a binary form. The approach we use involves a generalization of the global determinant method of Salberger.
\end{abstract}

\maketitle

\newpage

\section{Introduction}
\label{S1}

Let $F(x,y)$ be a binary form with integer coefficients, non-zero discriminant, and degree $D \geq 3$, such that the largest degree of an irreducible factor $f$ of $F$ over $\bQ$ is $d$. We say that an integer $n$ is \emph{$k$-free} if, for all primes $p$ dividing $n$, we have $p^k \nmid n$. In general, when $k \geq 2$, we expect that for a positive proportion of integer tuples $(x,y)$, that $F(x,y)$ is $k$-free; unless there is a reason for it not to be $k$-free. \\ \\
For any set $\S$, we denote by $\# \S$ the cardinality of $\S$. Write 
\begin{equation} \label{E0} \displaystyle \rho_F(m) = \# \{(i,j) \in \{0, \cdots, m-1\}^2 : F(i,j) \equiv 0 \pmod{m}\} \end{equation}
and 
\begin{equation}\label{E1}C_{F,k} = \prod_p \left(1 - \frac{\rho_F(p^k)}{p^{2k}}\right).\end{equation}
As we will show in Section \ref{S8}, and was shown by Filaseta in \cite{<Fil>}, the quantity $\rho_F(p^k) \ll p^{2k-2}$, whence the product in (\ref{E1}) converges absolutely since $k \geq 2$. Further, write 
\[N_{F,k}(B) =  \#\{ (x,y) \in \bZ^2 \cap [1,B]^2 :  F(x,y) \text{ is } k\text{-}\text{free}\}.\]
Suppose that there is no prime $p$ for which $p^k$ divides $F(x,y)$ for all $(x,y) \in \bZ^2$. In 1992, Greaves \cite{<Gre>} showed that as $(x,y)$ takes on values in $[1,B]^2 \cap \bZ^2$, the binary form $F(x,y)$ as above takes on, asymptotically as $B$ tends to $\infty$, $C_{F,k} B^2$ $k$-free values whenever $k \geq (d - 1)/2$. Filaseta improved this for irreducible binary forms (in which case $D = d$ in the above notation) to $k \geq (2\sqrt{2} - 1)d/4$ in \cite{<Fil>}. Hooley, in 2009, showed in \cite{<Hoo3>} that it suffices to take $k \geq (d-2)/2$. This improvement is significant for small degrees. In particular, it shows that suitable forms of degree 8 take on infinitely many cube-free values, a result unavailable until Hooley's paper. In 2011, Browning \cite{<B2>} was able to apply the so-called determinant method to obtain that irreducible binary forms satisfying the necessary non-degenerate conditions are $k$-free as soon as $\displaystyle k > 7d/16$. The determinant method was pioneered by Bombieri and Pila in \cite{<BP>} and greatly extended by Heath-Brown in \cite{<HB1>} and again by Salberger in \cite{<S1>} and \cite{<S2>}. The key to Browning's improvement is the so-called global determinant method introduced by Salberger in \cite{<S2>}. \\ \\
Granville showed, subject to the $abc$-conjecture, that appropriate binary forms $F(x,y)$ take on infinitely many square-free values in \cite{<Gran>}. Poonen showed in \cite{<Poo>} that general, not necessarily homogeneous, binary polynomials $F(x,y)$ with integer coefficients take on infinitely many square-free values assuming the $abc$-conjecture. However, one notes that Poonen's result does not lead to an asymptotic formula in general. \\ \\
For a real number $t$, let $\lceil t \rceil$ denote the least integer $u$ such that $t \leq u$. We obtain the following theorem: \\
\begin{theorem}\label{MT1} Let $F(x,y)$ be a binary form with non-zero discriminant of degree $D \geq 2$ with integer coefficients. Let $k \geq 2$ be an integer. Suppose that for each prime $p$, there exists a pair of integers $(x_0, y_0)$ such that $p^k$ does not divide $F(x_0, y_0)$. Let $d$ denote the largest degree of a factor $f$ of $F$ over $\bQ$. Then whenever 
\begin{equation} \label{main PF condition} k > \min \left \{\frac{7 d}{18}, \left \lceil \frac{d}{2} \right \rceil - 2 \right \},\end{equation}
we have
\begin{equation} \label{1E3} N_{F,k}(B) = C_{F,k}B^2 + O\left(\frac{B^2}{\log^\delta B}\right), \end{equation}
where $\delta = 0.7043$ if $k = 2, d = 6$ and $\delta = 1$ otherwise.
\end{theorem} 
For example, we have that $F(x,y)$ takes on infinitely many $6$-free values for $d \leq 15$. \\ \\
The value of $\delta$ in Theorem \ref{MT1} for the case $k = 2, d = 6$ is due to Helfgott \cite{<Hel>}. He obtained a better error term in (\ref{1E3}) for the cases $k = 2$ and $d= 3, 4, 5$ as well; see page 2 of \cite{<Hel>}. The condition $k > 7d/18$ in (\ref{main PF condition}) arises from the application of the global determinant method, and represents the main contribution of this paper. The condition $k > \lceil d/2 \rceil - 2$ is equivalent to the condition $d \leq 2k + 1$, which is exactly the condition required for Greaves' theorem in \cite{<Gre>}. This result is superior for small degrees. \\ \\
Mazur and Gouv\^{e}a showed in \cite{<GM>} that the problem of counting square-free values of binary forms can be applied to construct elliptic curves $E$ that possess many quadratic twists with large rank. They adapted methods introduced by Hooley in \cite{<Hoo>} to the context of binary forms. They remarked in \cite{<GM>} that the sieve method developed by G. Greaves in \cite{<Gre>} is more efficient at counting square-free values of binary forms and can be used to strengthen their result. Stewart and Top, in \cite{<ST>}, were able to achieve this. In particular, they proved as Theorem 1 in \cite{<ST>} that  for $F(x,y)$ a binary form with integral coefficients of degree $D \geq 3$ and non-zero discriminant, there exists a positive constant $C$ for which $F$ assumes at least $C B^{2/D}$ $k$-free values in the interval $[-B, B]$, provided that $k \geq (d-1)/2$ or if $k = 2, d \leq 6$. The condition $k \geq (d-1)/2$ or if $k = 2, d \leq 6$ corresponds precisely to the theorem of Greaves in \cite{<Gre>}. The argument used to prove Theorem 1 \cite{<ST>} is mostly independent of the arguments used in Greaves \cite{<Gre>}, whence we can improve Theorem 1 in \cite{<ST>} by providing a better estimate for $k$-free values of binary forms.  Analogous to \cite{<ST>}, we define the counting function $R_{F,k}(B)$ as follows:
$$\displaystyle R_{F,k}(B) = \# \{t \in \bZ: |t| \leq B, \exists (x,y) \in \bZ^2 \text{ such that } F(x,y) = t, t \text{ is } k\text{-free} \}.$$
We then have the following result:
\begin{theorem} \label{MT2} Let $k \geq 2$. Let $F(x,y)$ be a binary form of degree $D \geq 3$ with integer coefficients and non-zero discriminant, with no fixed $k$-th power prime divisor. Let $d$ be the largest degree of an irreducible factor of $F$ over $\bQ$ and suppose that 
$$ \displaystyle k > \min \left\{\frac{7d}{18}, \left \lceil \frac{d}{2} \right \rceil - 2 \right \}.$$ 
Then there exist positive real numbers $C_1$ and $C_2$, which depend on $F$ and $k$, such that if $B > C_1$, then
$$\displaystyle R_{F,k}(B) > C_2 B^{2/D}.$$
\end{theorem}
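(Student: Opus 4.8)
The plan is to run the argument of Theorem~1 of \cite{ST} with Theorem~\ref{MT1} playing the role of the theorem of Greaves \cite{Gre}; that theorem enters \cite{ST} only to guarantee a positive proportion of $k$-free values of $F$ in a square box, and Theorem~\ref{MT1} supplies this on a strictly wider range of $k$. First I would fix $c = c(F) > 0$ small enough that $|F(x,y)| \le B$ for all $(x,y) \in [1,B_{0}]^{2}$, where $B_{0} := cB^{1/D}$; homogeneity of $F$ makes this automatic as soon as $c^{D}$ times the sum of the absolute values of the coefficients of $F$ is at most $1$. The hypotheses of Theorem~\ref{MT1} hold for $F$ and $k$, and $C_{F,k} > 0$: each factor $1 - \rho_{F}(p)/p^{2k}$ is positive since $\rho_{F}(p) \le p^{2} < p^{2k}$ for $k \ge 2$, and $\rho_{F}(p) \ll_{F} p$ for all but finitely many $p$, so the product converges. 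Applying Theorem~\ref{MT1} with $B$ replaced by $B_{0}$ gives, for $B$ exceeding a constant depending on $F$ and $k$,
\[
N_{F,k}(B_{0}) = C_{F,k}B_{0}^{2} + O\!\left(\frac{B_{0}^{2}}{\log^{\delta}B_{0}}\right) \ \ge\ \tfrac{1}{2}\,C_{F,k}\,c^{2}\,B^{2/D}.
\]

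Next I would pass from counting pairs to counting values by the Cauchy--Schwarz inequality. Put $r(t) = \#\{(x,y)\in\bZ^{2}\cap[1,B_{0}]^{2}: F(x,y) = t\}$. Each $(x,y)\in[1,B_{0}]^{2}$ with $F(x,y)$ $k$-free produces a $k$-free integer $t = F(x,y)$ with $0 < |t| \le B$ represented by $F$, so $R_{F,k}(B) \ge \#\{t\ k\text{-free}: r(t)\ge1\}$ while $N_{F,k}(B_{0}) = \sum_{t\ k\text{-free}}r(t)$. Therefore
\[
N_{F,k}(B_{0})^{2} \ \le\ \#\{t\ k\text{-free}: r(t)\ge1\}\cdot\sum_{t}r(t)^{2}\ \le\ R_{F,k}(B)\cdot\#\{(P_{1},P_{2})\in\bZ^{4}\cap[1,B_{0}]^{4}: F(P_{1})=F(P_{2})\},
\]
and Theorem~\ref{MT2} follows, with $C_{2} = C_{2}(F,k)$, once we prove
\begin{equation}
\#\{(x_{1},y_{1},x_{2},y_{2})\in\bZ^{4}\cap[1,B_{0}]^{4}: F(x_{1},y_{1})=F(x_{2},y_{2})\}\ \ll_{F}\ B_{0}^{2}\ \asymp\ B^{2/D}. \tag{$\ast$}
\end{equation}

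To prove $(\ast)$ I would analyse the coincidence surface $W_{F} = \{g = 0\}\subset\bP^{3}$, where $g(x_{1},y_{1},x_{2},y_{2}) = F(x_{1},y_{1}) - F(x_{2},y_{2})$ is a form of degree $D$. The gradient $\nabla g$ vanishes at a point exactly when $\nabla F$ vanishes at each of $(x_{1},y_{1})$ and $(x_{2},y_{2})$; since the discriminant of $F$ is nonzero, $F_{x}$ and $F_{y}$ have only the origin as common zero, so $W_{F}$ is smooth. As $D\ge3$, a smooth surface of degree $D$ in $\bP^{3}$ contains only $O_{D}(1)$ lines, among them the rational diagonal $\{[a:b:a:b]\}$. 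By the determinant method of \cite{HB1,S2}, the rational points of $W_{F}$ of height at most $h$ off these lines number $\ll_{D,\ep}h^{2-\eta}$ for some $\eta = \eta(D) > 0$, while each of the $O_{D}(1)$ rational lines is a copy of $\bP^{1}$ and contributes $\asymp h^{2}$ points of height $\le h$; hence $\#\{P\in W_{F}(\bQ): H(P)\le h\} \ll_{D} h^{2}$. Finally $(\ast)$ follows by pulling back to the affine cone $\{g=0\}\subset\bA^{4}$: every integer solution lying in $[1,B_{0}]^{4}$ has the form $e\cdot\mathbf{p}$ with $\mathbf{p}$ a positive primitive integer vector representing some $P\in W_{F}(\bQ)$ with $H(P)\le B_{0}$, and $1\le e\le B_{0}/H(P)$, so the left side of $(\ast)$ is $\le\sum_{P\in W_{F}(\bQ),\,H(P)\le B_{0}}B_{0}/H(P)$; decomposing dyadically in $H(P)$ and using $\#\{H(P)\le h\}\ll_{D}h^{2}$ collapses this sum to $\ll_{D}B_{0}^{2}$.

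The only genuinely new ingredient is Theorem~\ref{MT1}; given it, Theorem~\ref{MT2} is the reduction above, and I expect the single delicate point to be $(\ast)$. Its proof turns on the smoothness of the coincidence surface --- the place where the non-vanishing of the discriminant of $F$ is used --- combined with the determinant method of \cite{HB1,S2} and the classical fact that a smooth surface of degree $\ge3$ in $\bP^{3}$ carries only finitely many lines. The hypothesis $D\ge3$ is essential at exactly this step: when $D = 2$ the surface $W_{F}$ is a doubly ruled quadric with infinitely many lines and $(\ast)$ fails, $\sum_{t}r(t)^{2}$ then acquiring an extra logarithmic factor. Beyond establishing $(\ast)$ the argument is routine bookkeeping, and it is in essence that of \cite{ST}, now powered by the larger admissible range of $k$ afforded by Theorem~\ref{MT1}.
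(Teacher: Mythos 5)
Your proof is correct in its broad strokes, but it takes a genuinely different route from the paper's. The paper runs the argument of Stewart and Top essentially verbatim, replacing Greaves's theorem with Theorem~\ref{MT1}: it introduces the arithmetic function $\mathfrak{s}(h)$ measuring the small-prime part of $h$, shows that most pairs $(a,b)$ give $|f(a,b)|$ large and $\mathfrak{s}(f(a,b))$ small, and then uses Stewart's Corollary~1 in \cite{Stew} --- an effective bound, in terms of $d$ and $\omega(g)$ for a controlled divisor $g$ of $h$, on the number of solutions of the Thue equation $f(a,b)=h$ --- to bound the multiplicity with which each value is taken. You instead use Cauchy--Schwarz to reduce the theorem to the single upper bound $\sum_{t}r(t)^{2}\ll B_{0}^{2}$, which you prove by applying the determinant method to the coincidence surface $W_{F}\colon F(x_{1},y_{1})=F(x_{2},y_{2})$ in $\bP^{3}$. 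Your key observations are sound: $W_{F}$ is smooth precisely because $\operatorname{disc}(F)\neq0$ (Euler's relation shows $F_{x},F_{y}$ share only the trivial zero), a smooth hypersurface in $\bP^{3}$ is geometrically integral, and a smooth surface of degree $D\geq3$ contains only $O_{D}(1)$ lines, so the off-line points of height $\le h$ number $\ll_{D,\ep}h^{2-\eta}$ by Heath-Brown's theory and the dyadic sum collapses to $\ll_{D}B_{0}^{2}$. This buys you a proof more in the spirit of the rest of the paper --- everything runs through the determinant method, rather than through the deep Diophantine approximation underlying Stewart's bound --- and it handles reducible $F$ uniformly (the smoothness of $W_{F}$ only needs $\operatorname{disc}(F)\neq0$, not irreducibility), whereas the paper is somewhat terse about the reducible case. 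What the paper's route buys, by contrast, is independence from the fine quantitative state of the art for rational points on surfaces: your step $(\ast)$ rests on an off-line bound $\ll h^{2-\eta}$ for every smooth surface of degree $\geq3$ in $\bP^{3}$, including cubics, so if you formalize your proof you would want to pin down the exact statement of Heath-Brown (or Salberger) that you are invoking and verify it covers $D=3$; the Stewart--Top route sidesteps this.
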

There is an analogous question for polynomials of a single variable. Suppose that $g(x)$ is a polynomial with integer coefficients and degree $d$ which is irreducible over $\bQ$ and has no fixed $k$-th power prime divisor. Then we expect that $g(x)$ should take on infinitely many $k$-free values for $k \geq 2$. Indeed, this was established conditionally assuming the $abc$-conjecture by Granville \cite{<Gran>}; see also \cite{<MP>}. For larger values of $k$, the investigation goes back to Ricci in 1933 \cite{<Ric>}, who established that $g$ takes on infinitely many $k$-free values for $k \geq d$. Erd\H{o}s \cite{<E>}, in 1956, showed that $k \geq d-1$ suffices. However, Erd\H{o}s only gave a lower bound and not an asymptotic formula. Hooley was able to obtain the exact asymptotic formula in terms of local densities in 1967 \cite{<Hoo>}. This point will be elaborated below. \\ \\
For each positive integer $m$, define $\rho_g(m)$ to be the cardinality of the set $\{i \in \{0, \cdots, m-1\} : g(i) \equiv 0 \pmod{m}\}$. Put \begin{equation}\label{E2} \displaystyle c_{g,k} = \prod_p \left(1 - \frac{\rho_g(p^k)}{p^k} \right), \end{equation}
which is well defined (that is, the product converges) when $k \geq 2$. It is non-zero precisely when $g$ does not have a fixed $k$-th power prime divisor. Write
$$N_{g,k}(B) = \# \{1 \leq x \leq B : g(x) \text{ is } k\text{-free}\}. $$
Then, one should expect that
\begin{equation} \label{E3}\displaystyle N_{g,k}(B) \sim c_{g,k}B. \end{equation}
Indeed, this was the result obtained by Hooley, under the assumption that $k \geq d-1$. A similar asymptotic formula was obtained by all subsequent authors. Nair obtained (\ref{E3}) under the assumption $k \geq \left(\sqrt{2} - \frac{1}{2} \right)d$ in 1976 \cite{<Nai1>}. Heath-Brown obtained (\ref{E3}) under the assumption that $k \geq (3d+2)/4$ in 2006 \cite{<HB2>}, where he used the determinant method. Browning improved Heath-Brown's result to $k \geq (3d+1)/4$ in \cite{<B2>}. We will give another proof of Browning's result in Section \ref{S10} as an illustration of our method.\\ \\
It should be noted that Heath-Brown obtained (\ref{E3}) for irreducible polynomials of the shape $f(x) = x^d + c, c \in \bZ$ assuming $k \geq (5d+3)/9$ in \cite{<HB4>}. His arguments are also inspired by weighted projective spaces, defined below, but are materially different from the arguments presented in the present paper. It would be interesting to see whether Theorem \ref{MT1} can be improved for diagonal forms of the shape $F(x,y) = Ax^d + By^d$. \\ \\
In order to prove Theorem \ref{MT1} and Theorem \ref{MT2}, we generalize the $p$-adic determinant method of Heath-Brown, as extended by Salberger, to the case of \emph{weighted} projective spaces. Broberg had taken this perspective to study rational points on curves in the weighted projective plane in \cite{<Bro0>}. Recall that a projective space $\bP_\bF^{r+1}$ over a field $\bF$ is defined as the set of equivalence classes of $\bF^{r+2} \setminus \{\textbf{0}\}$ under the equivalence relation defined by 
\[\Bx = (x_0, \cdots, x_{r+1}) \sim \By = (y_0, \cdots, y_{r+1}) \]
if and only if there exists $\lambda \in \bF \setminus \{0\}$ such that
\[(x_0, \cdots, x_{r+1}) = (\lambda y_0, \cdots, \lambda y_{r+1}). \] 
Let $\mathbf{w} = (w_0, \cdots, w_{r+1})$ be a vector of positive integers, which we will call the \emph{weight vector}. The coordinates of the weight vector are called \emph{weights}. With a given weight vector $\mathbf{w}$, we can define the \emph{weighted projective space} $\bP_\bF(w_0, \cdots, w_{r+1})$ to be the set of equivalence classes of $\bF^{r+2} \setminus \{\textbf{0}\}$ under the equivalence relation 
\[\Bx \sim \By\]
if and only if there exists $\lambda \in \ol{\bF} \setminus \{0\}$, where $\ol{\bF}$ denotes an algebraic closure of $\bF$, such that
\[(x_0, \cdots, x_{r+1}) = (\lambda^{w_0} x_0, \cdots, \lambda^{w_{r+1}} x_{r+1}).\] 
Our Theorem \ref{MT3} generalizes Salberger's Theorem 2.2 in \cite{<S2>} and Heath-Brown's Theorem 14 in \cite{<HB1>}. The generalization of Heath-Brown and Salberger's determinant methods will form the technical heart of this paper.\\ \\
We then apply the determinant method mentioned above which applies to the weighted projective space setting to the weighted projective surface $X$ defined by the following equation:
\begin{equation} \label{BFE}  f(x,y) = vz^k,
\end{equation}
which is a surface in $\bP_\bQ(1,1,d-2k,2)$. Here $f$ is an irreducible factor of degree $d$ of the binary form $F$ given in Theorem \ref{MT1}. Applying the determinant method in this way allows us to deal with a dimension two subvariety $X$ inside the weighted projective space $\bP_\bQ(1,1,d-2k,2)$. This leads to a stronger result than we would obtain by dealing with a dimension three subvariety inside $\bA^4$ or working with a surface in $\bA^3$ by a priori fixing one variable, which was Browning's approach. We emphasize that viewing (\ref{BFE}) as a surface in weighted projective space is critical to our improvement.  \\ \\
We now make a remark regarding the choice of weights $(1,1,d-2k, 2)$. It seems a priori that the better weight choice is $(1,1,d-k, 1)$, which is similar to Heath-Brown's approach in \cite{<HB4>}. However the weight vector $(1,1,d-k,1)$ does not take into account the progress made by Greaves and will in fact produce results inferior to Greaves in \cite{<Gre>}. Nevertheless, in our proof of Theorem \ref{BThm} we will use $(1,1,d-k,1)$, precisely because Greaves' result does not apply in this context. \\ \\
Moreover, we remark that our approach does not seem to generalize in an obvious way to subsequent work by Browning, Heath-Brown, and Salberger dealing with arbitrary projective varieties in \cite{<BHS>}, because we do not know how to deal with projections of arbitrary weighted projective varieties onto a hypersurface in a weighted projective space of lower dimension. \\ \\
The outline of our paper is as follows. In Section \ref{S2}, we follow closely Salberger's argument in \cite{<S1>} to examine the Hilbert functions of weighted projective hypersurfaces. This allows us to extend some results found in \cite{<CLO>}. Our main result on the determinant method is Theorem \ref{MT3}, which is stated in Section \ref{S3}. The second part of Theorem \ref{MT3} is analogous to Salberger's Theorem 2.2 in \cite{<S2>}, and the first part is analogous to Heath-Brown's Theorem 14 in \cite{<HB1>}. We prove Theorem \ref{MT3} in Sections \ref{S5} and \ref{S6}. In Sections \ref{S8} and \ref{S9}, we follow the strategies of Heath-Brown and Salberger to apply the results in Sections \ref{S3} to \ref{S6} to prove Theorem \ref{MT1}. In Section \ref{S10}, we give another proof of Browning's theorem on $k$-free values of polynomials in \cite{<B2>} as an illustration of our approach. Finally, in Section \ref{S11}, we give a proof of Theorem \ref{MT2} which is a consequence of Theorem \ref{MT1} and the argument given in \cite{<ST>}.

\subsection*{Acknowledgements} The author thanks Professor P.~Salberger for providing the author with a copy of his preprint \cite{<S2>}. The author thanks his Doctoral Advisor Professor C.~L.~ Stewart for introducing him to this problem, many years of encouragement, and for his patient and thorough readings and corrections which improved the quality of this paper immeasurably. This work would not be possible if not for his efforts. Both anonymous referees provided very useful comments which significantly improved the quality of this paper. The author also thanks the University of Waterloo and the Government of Ontario for providing financial support while this work was being completed.

\section{Hilbert functions on weighted projective varieties}
\label{S2} 

In this section, we work out some basic notions of Hilbert functions and weighted homogeneous ideals needed for the rest of the paper. Salberger relied on the analogous results in the projective case for his results in \cite{<S1>}.\\ \\
Let $K$ be a fixed field of characteristic zero. We write $\boldsymbol \alpha = (\alpha_0, \cdots, \alpha_{r+1})$ to denote a sequence of non-negative integers, and for $\textbf{x} = (x_0, \cdots, x_{r+1})$ we write
\[\textbf{x}^{\boldsymbol \alpha} = x_0^{\alpha_0} \cdots x_{r+1}^{\alpha_{r+1}}.\]
Let $\mathbf{w} = (w_0, \cdots, w_{r+1})$ be a weight vector and let $u$ be a non-negative integer. For a monomial $\Bx^{\boldsymbol \alpha} = x_0^{\alpha_0} \cdots x_{r+1}^{\alpha_{r+1}}$, define the \emph{weighted degree} of $\Bx^{\boldsymbol \alpha}$ with respect to $\Bw$ to be
\[\boldsymbol \alpha \cdot \mathbf{w} = \alpha_0 w_0 + \cdots + \alpha_{r+1} w_{r+1}.\]
We say a polynomial $F \in K[x_0, \cdots, x_{r+1}]$ is \emph{weighted homogeneous} (with respect to $\mathbf{w}$) of weighted degree $u$ if for each monomial $\Bx^{\balpha}$ that appears in $F$ with a non-zero coefficient, the weighted degree of $\Bx^{\boldsymbol \alpha}$ is equal to $u$. This allows us to define the degree of a hypersurface $X$ in $\bP(\Bw)$, but not necessarily the degree of a subvariety of codimension greater than one. This will not be an issue since in our main application, we will embed such subvarieties explicitly into a lower dimensional weighted projective space, in which they will have codimension equal to one and so the definition for the hypersurface case applies. In other situations, we will rely on a pullback to a straight projective space where the notion of degree is well understood. \\

Define the set $K[x_0, \cdots, x_{r+1}]_{\textbf{w},u}$ to be the collection of weighted homogeneous polynomials with weight vector $\textbf{w}$ whose weighted degree is equal to $u$. We say that $I \subset K[x_0, \cdots, x_{r+1}]$ is a \emph{weighted homogeneous ideal} (with respect to \textbf{w}) if $I$ is generated by a set of weighted homogeneous polynomials with respect to the weight vector \textbf{w}. If $I \subset K[x_0, \cdots, x_{r+1}]_{\textbf{w}}$ is a weighted homogeneous ideal with weight vector $\textbf{w}$, then the set $I_u$ given by
$$\displaystyle I_u = I \cap K[x_0, \cdots, x_{r+1}]_{\textbf{w},u}$$
is a $K$-subspace of $K[x_0, \cdots, x_{r+1}]_{\textbf{w},u}$. Like in the projective case, we can define the Hilbert function of $I$ to be
$$ \displaystyle \mathcal{H}_I(u) = \dim_K(K[x_0, \cdots, x_{r+1}]_{\textbf{w},u} / I_u).$$ 
We can define a graded order $<$ on $K[x_0, \cdots, x_{r+1}]$ by the following: for $\boldsymbol \alpha = (\alpha_0, \cdots, \alpha_{r+1})$, $\boldsymbol \beta = (\beta_0, \cdots, \beta_{r+1}) \in \bZ_{\geq 0}^{r+2}$ we have $\boldsymbol \alpha > \boldsymbol \beta$ if $w_0 \alpha_0 + \cdots + w_{r} \alpha_{r} + w_{r+1} \alpha_{r+1} > w_0 \beta_0 + \cdots + w_{r} \beta_{r} + w_{r+1} \beta_{r+1}$. If there is a tie, i.e. $w_0 \alpha_0 + \cdots + w_{r+1} \alpha_{r+1} = w_0 \beta_0 + \cdots + w_{r+1} \beta_{r+1}$, then we take $\boldsymbol \alpha > \boldsymbol \beta$ if $\alpha_{r+1} - \beta_{r+1} > 0.$ If the weighted sums are equal and $\alpha_{r+1} = \beta_{r+1}$, then we compare $\alpha_{r}$ and $\beta_{r}$. This continues until we break the tie, so this ordering is a total order. Under this ordering, we can define the leading term of a given polynomial. 
\begin{definition} \label{2D1} Suppose 
\[F(x_0, \cdots, x_{r+1}) = \sum_{\textbf{w} \cdot \boldsymbol \beta = u} c_{\boldsymbol \beta} \textbf{x}^{\boldsymbol \beta} \in K[x_0, \cdots, x_{r+1}]\] 
is a weighted homogeneous polynomial with respect to the weight vector $\textbf{w}$ of weighted degree $u$. Suppose $\textbf{x}^{\boldsymbol \alpha}$ is a monomial which appears in $F$ with non-zero coefficient and which is maximal with respect to the total order $<$. Then, we say that $\textbf{x}^{\boldsymbol \alpha}$ is the \emph{leading monomial} of $F$. If we include the coefficient $c_{\boldsymbol \alpha}$ of $\textbf{x}^{\boldsymbol \alpha}$, then $c_{\balpha} \textbf{x}^{\boldsymbol \alpha}$ is the \emph{leading term} of $F$ which we write as $\operatorname{LT}(F)$. 
\end{definition}

Write $\langle \operatorname{LT}(I) \rangle$ to denote the ideal generated by the leading terms of polynomials in $I$. Our first result is the following:

\begin{proposition} \label{2P1} Let $I \subset K[x_0, \cdots, x_{r+1}]_{\Bw}$ be a weighted homogeneous ideal. Then $I$ has the same Hilbert function as $\langle \operatorname{LT}(I) \rangle$. \end{proposition}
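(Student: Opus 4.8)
The plan is to mimic the classical proof in the projective/affine setting (as in Cox--Little--O'Shea, which the paper explicitly says it is extending) and show that for each weighted degree $u$ the graded pieces $I_u$ and $\langle \operatorname{LT}(I)\rangle_u$ have the same codimension inside $K[x_0,\dots,x_{r+1}]_{\mathbf{w},u}$. Since each $K[x_0,\dots,x_{r+1}]_{\mathbf{w},u}$ is a finite-dimensional $K$-vector space with the finite basis of monomials of weighted degree $u$, and since the total order $<$ restricts to a genuine total order on this finite set of monomials, everything reduces to finite-dimensional linear algebra once we establish the right ``standard monomial'' statement.

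First I would prove the key lemma: a monomial $\mathbf{x}^{\boldsymbol\alpha}$ lies in $\langle \operatorname{LT}(I)\rangle$ if and only if $\mathbf{x}^{\boldsymbol\alpha} = \operatorname{LM}(g)$ (up to scalar, the leading monomial) for some $g \in I$. The nontrivial direction is that if $\mathbf{x}^{\boldsymbol\alpha}\in\langle\operatorname{LT}(I)\rangle$ then it is the leading monomial of some element of $I$: write $\mathbf{x}^{\boldsymbol\alpha}$ as a combination $\sum_i h_i \operatorname{LT}(f_i)$ with $f_i\in I$, expand, and observe that because $<$ is a graded (weighted-degree-respecting) order it is \emph{multiplicative} in the sense that $\operatorname{LM}(h_i f_i) = \operatorname{LM}(h_i)\operatorname{LM}(f_i)$ — this uses that the weighted degree is additive on products and that the tie-breaking lexicographic rule is compatible with multiplication. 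Then a standard cancellation argument (replace terms of highest order that cancel, induct downward through the finitely many monomials of the relevant weighted degree) produces an element of $I$ whose leading monomial is exactly $\mathbf{x}^{\boldsymbol\alpha}$. This is the same mechanism as Lemma/Theorem in CLO Chapter 9 on Hilbert functions, adapted to the weighting.

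Next, for a fixed $u$, let $\Lambda_u$ be the set of monomials of weighted degree $u$ that do \emph{not} lie in $\langle\operatorname{LT}(I)\rangle$ (the ``standard monomials''). I would show: (i) the images of $\Lambda_u$ in $K[x_0,\dots,x_{r+1}]_{\mathbf{w},u}/\langle\operatorname{LT}(I)\rangle_u$ form a basis, which is immediate since $\langle\operatorname{LT}(I)\rangle$ is a monomial ideal, so $\langle\operatorname{LT}(I)\rangle_u$ is spanned by the monomials of weighted degree $u$ lying in it; and (ii) the images of $\Lambda_u$ in $K[x_0,\dots,x_{r+1}]_{\mathbf{w},u}/I_u$ are linearly independent — if some nonzero combination $g=\sum_{\boldsymbol\alpha\in\Lambda_u} c_{\boldsymbol\alpha}\mathbf{x}^{\boldsymbol\alpha}$ lay in $I$, then $\operatorname{LM}(g)\in\Lambda_u$ would, by the key lemma, also lie in $\langle\operatorname{LT}(I)\rangle$, contradicting the definition of $\Lambda_u$ — and that they span, since given any $h\in K[x_0,\dots,x_{r+1}]_{\mathbf{w},u}$ one can repeatedly subtract off suitable elements of $I$ (using that every monomial in $\langle\operatorname{LT}(I)\rangle$ is a leading monomial of some element of $I$) to reduce $h$ modulo $I$ to a $K$-linear combination of standard monomials; termination again follows because there are only finitely many monomials of weighted degree $u$ and each reduction strictly lowers the leading monomial. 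Hence $\dim_K(K[x_0,\dots,x_{r+1}]_{\mathbf{w},u}/I_u) = \#\Lambda_u = \dim_K(K[x_0,\dots,x_{r+1}]_{\mathbf{w},u}/\langle\operatorname{LT}(I)\rangle_u)$, which is exactly $\mathcal{H}_I(u)=\mathcal{H}_{\langle\operatorname{LT}(I)\rangle}(u)$ for all $u$.

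The main obstacle I anticipate is purely bookkeeping: verifying that the total order $<$ defined in the excerpt — weighted degree first, then reverse-lexicographic tie-breaking in $\alpha_{r+1},\alpha_r,\dots$ — is genuinely \emph{multiplicative}, i.e. $\mathbf{x}^{\boldsymbol\alpha} < \mathbf{x}^{\boldsymbol\beta}$ implies $\mathbf{x}^{\boldsymbol\gamma}\mathbf{x}^{\boldsymbol\alpha} < \mathbf{x}^{\boldsymbol\gamma}\mathbf{x}^{\boldsymbol\beta}$, so that $\operatorname{LT}(fg)=\operatorname{LT}(f)\operatorname{LT}(g)$. The weighted-degree component is clearly additive and hence monotone under multiplication; the subtlety is only that within a fixed weighted degree the lexicographic comparison must also be translation-invariant, which it is because adding $\boldsymbol\gamma$ shifts the coordinates being compared by the same amount. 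Everything else (finiteness of each graded piece, the descending-chain/termination arguments) is automatic because we are always working inside a single finite-dimensional piece $K[x_0,\dots,x_{r+1}]_{\mathbf{w},u}$, so unlike the ungraded Gr\"obner basis theory there is no need for Dickson's lemma or Noetherianity arguments — the weighting causes no real trouble here beyond notation.
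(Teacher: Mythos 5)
Your proposal is correct and takes essentially the same approach as the paper, which simply defers to Proposition 9 in Chapter 9 of Cox--Little--O'Shea; you have reproduced that standard-monomial argument in detail, adapted to the weighted grading. Your explicit verification that the weighted graded order $<$ is multiplicative (so that $\operatorname{LM}(fg)=\operatorname{LM}(f)\operatorname{LM}(g)$, which underlies both the key lemma and the reduction/termination steps) is exactly the adaptation the paper leaves implicit.
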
 

\begin{proof} The argument is identical to Proposition 9 in Chapter 9 of \cite{<CLO>}.
\end{proof}

\begin{remark} The choice of the ordering $<$ does not matter in Proposition \ref{2P1}. Indeed, we will choose slightly different orderings when convenient. \end{remark}
We have
\[ \mathcal{H}_I(u) = \mathcal{H}_{\text{LT}(I)}(u).\]
With this characterization, we can define for each $i \in \{0, 1, \cdots, r+1\}$
\begin{equation} \label{2E1} \displaystyle \sigma_{I,i}(u) = \sum_{\substack{\boldsymbol \beta \cdot \Bw = u \\ \textbf{x}^{\boldsymbol \beta} \notin \text{LT}(I)} } \beta_i. \end{equation}
From the definition of the Hilbert function, there are $\mathcal{H}_I(u)$ many monomials that are not the leading monomial of any polynomial in $I_u$. Thus, it follows immediately that
\[ w_0 \sigma_{I,0}(u) + \cdots + w_{r} \sigma_{I,r}(u) + w_{r+1}\sigma_{I,r+1}(u) = u \mathcal{H}_I(u).\]

Now by Theorem 3.4.4 in \cite{<Dol>}, the Hilbert series of a hypersurface generated by a form $F$ of weighted degree $d$ with respect to the weight vector $\textbf{w}$ is given by
\begin{equation} \label{Hilbert series} \frac{(1 - x^d)}{(1 - x^{w_0}) \cdots (1 - x^{w_{r+1}})}.\end{equation}
From here on, we shall assume that our weight vector $\Bw$ has the property that the $\gcd$ of any $r+1$ of the weights is equal to $1$. This distinction will be automatic in the relevant weight vectors in our paper; see Theorem \ref{T1}. Thus, by examining the poles of the function above we conclude that there is only one pole of order $r+1$, we see that the $u$-th coefficient is of the form
\begin{equation} \label{2E2} \displaystyle \mathcal{H}_I(u) = \frac{du^{r}}{r!w_0 \cdots w_{r+1}} + O_{\Bw,r}(d^{r+1} + d^2 u^{r-1}) = \frac{du^{r}}{r!w_0 \cdots w_{r+1}} + O_{\Bw,r}(d^{r+1} u^{r-1}) , \end{equation}
where the constant in front of the big-$O$ term depends only on $w_0, \cdots, w_{r+1}$ and $r$. \\ \\ 
The argument in the proof of our next result, Proposition \ref{2P2}, was inspired by a discussion on MathOverflow with Richard Stanley \cite{<Stan>}. In particular, the construction of the generating function used below was suggested by Stanley. 
\begin{proposition} \label{2P2} Let $K$ be a field of characteristic zero and $<$ be the graded monomial ordering as before. Suppose $F(x_0, \cdots, x_{r+1}) \in K[x_0, \cdots, x_{r+1}]$ has weighted degree $d$ with respect to $\Bw$ and leading monomial $\Bx^{\boldsymbol \alpha}$. Set $I = \langle F \rangle$. Define $\sigma_{I,m}(u)$ as in (\ref{2E1}). Then 
\[ \sigma_{I,m}(u) = a_{I,m} u\mathcal{H}_I(u)  + O_{\Bw, d, r}(u^{r} ),\]
where
\begin{equation} \label{2E4} a_{I,m} = \frac{d - w_m \alpha_m}{(r+1)w_m d}\end{equation}
for $m = 0, 1, \cdots, r+1$. 
\end{proposition}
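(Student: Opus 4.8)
The plan is to reduce the statement to a single monomial‑ideal computation and then read off the asymptotics from a generating function, along the lines of the construction suggested by Stanley and recorded before the statement.

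\textbf{Reduction to a monomial ideal.} First note that the graded order $<$ is a monomial order: the weighted degree is additive, so a strict inequality of weighted degrees is preserved under multiplying by a monomial, and a tie in weighted degree is likewise preserved while the tie‑breaking comparison of the coordinates $\alpha_{r+1},\alpha_r,\dots$ is unaffected by adding a fixed exponent vector; since the weights are positive, $<$ well‑orders the monomials. Hence $\operatorname{LT}(GH)=\operatorname{LT}(G)\operatorname{LT}(H)$, so the single polynomial $F$ is a Gr\"obner basis of $I=\langle F\rangle$ (as in Chapter 9 of \cite{CLO}), and $\langle\operatorname{LT}(I)\rangle=\langle\operatorname{LT}(F)\rangle=\langle\Bx^{\balpha}\rangle$. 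By Proposition \ref{2P1}, $\mathcal{H}_I(u)$ is the number of monomials $\Bx^{\bbeta}$ with $\bbeta\cdot\Bw=u$ and $\Bx^{\balpha}\nmid\Bx^{\bbeta}$, and $\sigma_{I,m}(u)$ is the sum of $\beta_m$ over exactly this set of monomials.

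\textbf{Generating functions.} Set $Q(t)=\prod_{i=0}^{r+1}(1-t^{w_i})^{-1}$ and $G_m(t)=\dfrac{t^{w_m}}{1-t^{w_m}}\,Q(t)$. Summing over all monomials and subtracting those divisible by $\Bx^{\balpha}$ — writing $\bbeta=\balpha+\boldsymbol{\gamma}$ for the latter, which contributes a factor $t^d$ and replaces $\beta_m$ by $\alpha_m+\gamma_m$ — gives
\[\sum_{u\ge0}\sigma_{I,m}(u)\,t^u=(1-t^d)\,G_m(t)-\alpha_m\,t^d\,Q(t),\]
\[\sum_{u\ge0}\mathcal{H}_I(u)\,t^u=(1-t^d)\,Q(t),\]
the second being the Hilbert series recorded above via \cite{Dol}. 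Writing $P(t)=(1-t^d)Q(t)$ we have $\sum_u u\mathcal{H}_I(u)t^u=tP'(t)$, so it suffices to study the coefficients of
\[R_m(t):=(1-t^d)\,G_m(t)-\alpha_m\,t^d\,Q(t)-a_{I,m}\,tP'(t).\]

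\textbf{Singularity analysis.} The functions $Q,G_m,P$ have poles only at roots of unity. Since $\Bw$ is well‑formed, at a root of unity $\zeta\ne1$ at most $r$ of the $w_i$ are divisible by $\ord(\zeta)$, so $Q$ has a pole of order $\le r$ and $G_m$ one of order $\le r+1$ there, while at $t=1$ their pole orders are $r+2$ and $r+3$. Expanding $t^d=(1-(1-t))^d$ and $1-t^{w_i}=(1-t)(1+t+\cdots+t^{w_i-1})$, I would compute the principal parts at $t=1$ of $(1-t^d)G_m(t)-\alpha_m t^dQ(t)$ and of $a_{I,m}tP'(t)$: both have a pole of order $r+2$, and equating the coefficients of $(1-t)^{-(r+2)}$ determines $a_{I,m}$ and yields exactly formula (\ref{2E4}). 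Consequently $R_m(t)$ has a pole of order $\le r+1$ at $t=1$ and of order $\le r+1$ at every other root of unity, so $[t^u]R_m(t)=O_r(u^r)$ with implied constant governed by the partial‑fraction numerators. Those numerators are polynomials in $d$ whose sizes one bounds using $\alpha_m\le d/w_m$ and the binomials $\binom{d}{j}$ from the expansion of $t^d$ (the $(1-t)^{-(r+1)}$‑coefficient at $t=1$ being $O_r(d^2)$); combining this with the trivial estimate $\sigma_{I,m}(u),\mathcal{H}_I(u)=O_r(u^{r+2})$ in the range $u<d$ (where $\Bx^{\balpha}$ divides no monomial of weighted degree $u$) gives the error term $O_r(d^{r-1}+d^2u^r)$.

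\textbf{The main obstacle.} The crux is the bookkeeping in the last step: showing that the top $(1-t)^{-(r+2)}$ part of $R_m$ at $t=1$ cancels exactly — this is precisely the computation that forces the value (\ref{2E4}) of $a_{I,m}$ — and then tracking how the remaining partial‑fraction data at $t=1$ and at the other roots of unity depend on $d$ and $u$ in order to land the stated error. The quasi‑periodic lower‑order part of the Hilbert quasi‑polynomial, together with the separate handling of small $u$, are the delicate points; everything else is routine manipulation of rational generating functions.
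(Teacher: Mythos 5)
Your approach is correct, and it is a genuinely different route from the paper's. The paper works with a \emph{two-variable} generating function $G_{j,m}(x,y)$ tracking both the weighted degree and the exponent $\beta_m$, isolates the sets $\{\bbeta : \beta_j < \alpha_j\}$ one index $j$ at a time via the quantities $T_m^j(u)$, and then reassembles $\sigma_{I,m}(u)$ by inclusion--exclusion over the subsets of $\{0,\dots,r+1\}$ on which the constraint $\beta_j<\alpha_j$ holds. You instead use complementary counting — total monomials of weighted degree $u$ minus those divisible by $\Bx^{\balpha}$, with the shift $\bbeta=\balpha+\boldsymbol\gamma$ — which collapses the whole computation to a single-variable rational function $R_m(t)=(1-t^d)G_m(t)-\alpha_m t^d Q(t)-a_{I,m}\,tP'(t)$ and a single pole cancellation at $t=1$. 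That is cleaner: it avoids both the extra variable and the inclusion--exclusion entirely. I verified your leading-pole computation: with $t=1-s$, $(1-t^d)G_m$ contributes $\tfrac{d}{w_m\prod w_i}s^{-(r+2)}$, $\alpha_m t^dQ$ contributes $\tfrac{\alpha_m}{\prod w_i}s^{-(r+2)}$, and $tP'$ contributes $\tfrac{(r+1)d}{\prod w_i}s^{-(r+2)}$, which forces exactly $a_{I,m}=\tfrac{d-w_m\alpha_m}{(r+1)w_md}$ as in (\ref{2E4}).

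The one place where the writeup is thin — and you flag it yourself — is the quantitative bookkeeping after the cancellation: bounding the $(1-t)^{-(r+1)}$ residue at $t=1$ by $O(d^2)$, controlling the residues at the other roots of unity, and handling the short range $u\lesssim d$. This is genuinely the same labor that the paper spends on (bounding $b_{-r-1}$ and the $\natural$-sum), so neither approach dodges it; and you should be aware that the paper's own proof actually lands on $O_r(d^r+d^2u^r)$ rather than the $O_r(d^{r-1}+d^2u^r)$ asserted in the proposition statement, so the $d$-power in the subleading term is a loose end in the source as well. Still, the structure of your argument — what has to cancel, and what is left over — is right, and filling in the residue estimates would complete a valid alternative proof.
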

\begin{proof} Suppose that $\textbf{x}^{\boldsymbol \beta}$ is a monomial of weighted degree $u$ with respect to the weight vector $\mathbf{w}$. Then $\textbf{x}^{\boldsymbol \beta} \in \langle \text{LT}(I) \rangle $ if and only if $\Bx^{\boldsymbol \alpha} | \textbf{x}^{\boldsymbol \beta}$. Hence, we need to count those monomials $\textbf{x}^{\boldsymbol \beta} = x_0^{\beta_0} \cdots x_{r+1}^{\beta_{r+1}}$ of weighted degree $u$ such that at least one of the exponents $\beta_i < \alpha_i$. Write $\sideset{}{^\ast}\sum$ to indicate a summation taken over those $\boldsymbol \beta = (\beta_0, \cdots, \beta_{r+1}) \in \bZ_{\geq 0}^{r+2}$ such that $w_0 \beta_0 + \cdots + w_{r+1} \beta_{r+1} = u$ and that $\beta_j < \alpha_j$ for some $0 \leq j \leq r+1$. Our goal, then, is to evaluate the sum
\[ \sigma_{I,m}(u) = \sideset{}{^\ast}\sum \beta_m\]
for each $0 \leq m \leq r+1$. To do this, let us define:
$$\displaystyle T_m^j(u) = \sum_{\substack{\boldsymbol \beta \cdot \Bw = u \\ \beta_j < \alpha_j}} \beta_m.$$
We want to emphasize that the evaluation of $T_m^j(u)$ will vary based on whether $j \ne m$ or $j = m$. In fact, the terms $T_m^m(u)$ will be negligible. We address the former situation. Define the function
$$\displaystyle G_{j,m}(x,y) = \frac{1 + y^{w_j} + \cdots + y^{w_j(\alpha_j - 1)}}{[\prod_{t \ne j,m} (1 - y^{w_t}) ](1-xy^{w_m})}$$
for $j \ne m$. We then take the derivative with respect to $x$ and evaluate at $x = 1$ to obtain
\begin{equation} \label{2E3} \frac{d}{dx} G_{j,m}(x,y) |_{x=1}  =  \frac{(1 + y^{w_j} + \cdots + y^{(\alpha_j - 1)w_j})y^{w_m}}{[\prod_{t \ne j,m} (1 - y^{w_t}) ](1-y^{w_m})^2}.\end{equation}
Note that $T_m^j(u)$ is equal to the coefficient of $y^u$ in the series expansion of (\ref{2E3}) around $0$. Since no $r+1$ of the weights have a common factor, it follows that for each root of unity $\zeta$, $\zeta$ is a root of at most $r+1$ factors in the denominator of (\ref{2E3}). Hence there is a single pole of order $r+2$ at $y = 1$.Since the highest order pole in (\ref{2E3}) is $r+2$, its Laurent series around $0$ is given by
\[c_{-r-2} y^{-r-2} + c_{-r-1} y^{-r-1} + \cdots\]
for complex coefficients $c_t \in \bC$. Using Cauchy's integral formula, we can calculate the coefficient $c_{-r-2}$:
\[ \frac{1}{2\pi i} \oint \frac{(1-z)^{r+1} (1 + z^{w_j} + \cdots + z^{w_j(\alpha_j - 1)})z^{w_j}}{(1-z^{w_m})^2\prod_{t \ne j,m} (1 - z^{w_t})} dz,\]
and get that
\[ c_{-r-2} = \frac{\alpha_j}{w_m^2 \prod_{t \ne j,m} w_t}.\]
Thus, $T_m^j(u)$ is asymptotically given by
$$\displaystyle \frac{\alpha_j}{w_m^2 \prod_{t \ne j,m} w_t} \frac{u^{r+1}}{(r+1)!}$$
for $j \ne m$, as $u \rightarrow \infty$. We now examine the contribution to $T_m^j(u)$ from other poles. From (\ref{2E3}), it follows that each pole is a root of unity. Recall that there are no other poles of order $r+2$. The contribution from a pole $\zeta$ of order $k$ less than $r+2$ is given by
\[ \frac{1}{2\pi i} \oint \frac{(\zeta-z)^{k-1} (1 + z^{w_j} + \cdots + z^{w_j(\alpha_j - 1)})z^{w_j}}{(1-z^{w_m})^2\prod_{t \ne j,m} (1 - z^{w_t})} dz.\]
The evaluation of this integral will depend on whether $\zeta$ is a $w_t$-th root of unity for $t \ne j$. To help us evaluate the integral, define
\[\ff_{\zeta,t}(z) = \begin{cases} \dfrac{1}{1 - z^{w_t}}, & \text{if } \zeta^{w_t} \ne 1, \\ \dfrac{\zeta - z}{1 - z^{w_t}}, & \text{if } \zeta^{w_t} = 1. \end{cases}\]
We now estimate $\ff_{\zeta, t}(\zeta)$ in both cases. Put $\zeta = e^{\frac{2\pi i l}{n}}$ with $\gcd(l,n) = 1$. Then
\begin{align*} 1 - \zeta^{w_t} & = 1 - \cos\left(\frac{2 \pi w_t l}{n} \right) - i \sin \left(\frac{2 \pi w_t l}{n} \right) \\
& = 2 \sin \left(\frac{\pi w_t l}{n}\right) \left(\sin \left(\frac{\pi w_t l}{n}\right) - i \cos \left(\frac{\pi w_t l}{n} \right) \right). \end{align*}
The term in the parentheses on the right has absolute value one, and we have
\[\left \lvert 2 \sin \left(\frac{\pi w_t l}{n} \right) \right \rvert \geq 2 \sin (\pi/n).\]
Moreover, $n \geq 2$, and on the interval $[0,\pi/2]$ $\sin(x)$ satisfies
\[\sin(x) \geq \frac{2x}{\pi},\]
whence
\[\left \lvert 2 \sin \left(\frac{\pi w_t l}{n} \right) \right \rvert \geq \frac{4}{n}.\]
Therefore, in this case, we have
\[\lvert \ff_{\zeta, t}(\zeta) \rvert \leq \frac{n}{4}.\]
In the second case, we put $\eta$ for a primitive $w_t$-th root of unity, and put $\zeta = \eta^l$ for some $1 \leq l \leq w_t - 1$. Then we make the observation that 
\[\prod_{\substack{1 \leq s \leq w_t \\ s \ne l}} (\zeta - \eta^s) = n \zeta^{n-1}.\]
Thus, in this case, we have $\lvert \ff_{\zeta, t}(\zeta) \rvert = n^{-1}$. \\ \\
Next, we deal with the numerator $\fg(z) = 1 + z^{w_j} + \cdots + z^{w_j(\alpha_j - 1)}$. We note that if $\fg(\zeta) \ne 0$, then we can simply bound from above by the triangle inequality to obtain the upper bound $\alpha_j$. Otherwise we make the observation that the contribution to the residue is equal to evaluating
\[\fg(z) (z - \zeta)^{-1}\]
at $z = \zeta$, which is equivalent to evaluating $\fg'(z) = w_j z^{w_j - 1} + \cdots + w_j(\alpha_j - 1) z^{w_j(\alpha_j - 1) - 1}$ at $z = \zeta$. The latter is readily seen to be bounded from above by $\displaystyle \frac{w_j \alpha_j (\alpha_j - 1)}{2}$. \\ \\
Combining these estimates, we see that order of magnitude of the residue does not exceed
\[\frac{w_j \alpha_j (\alpha_j - 1)}{2} n^{r + 2 - 2k}.\]
Therefore, the contribution to $T_m^j(u)$ from each pole of order $k$ is at most
\[\frac{w_j \alpha_j (\alpha_j - 1) n^{r + 2 - 2k}}{2} \frac{u^k}{k!}.\]
Note that $n$ is bounded above by the maximum of the $w_t$'s and bounded from below by the minimum of the $w_t$'s and $2$. Moreover, $\alpha_j$ is bounded from above by $d$. We have thus obtained an acceptable error term as stated in the proposition. \\ 

For the case $j = m$, we put
\[ G_{m,m}(x,y) = \frac{1 + xy^{w_m} + \cdots + (xy^{w_m})^{\alpha_m - 1}}{\prod_{t \ne m} (1 - y^{w_t})},\]
so that
\begin{equation} \label{LS2}\frac{d}{dx} G_{m,m}(x,y)|_{x=1} = \frac{y + 2y^2 + \cdots + (\alpha_m - 1)y^{\alpha_m - 1}}{\prod_{t \ne m} (1 - y^{w_t})}.\end{equation}
The pole at $y = 1$ is only of order $r+1$ as opposed to $r+2$. By examining the Laurent series of (\ref{LS2}) and evaluating the $-(r+1)$-th coefficient, we see that the contribution from the pole of order $(r+1)$ is equal to
\[ \frac{\alpha_m(\alpha_m - 1)}{\prod_{t \ne m} w_t} \frac{u^{r}}{r!}.\]
Observe that the coefficient is bounded from above by $d^2$. The lower order poles can be analyzed as before, so we omit this step. \\ \\
We now consider sums of the form 
$$\displaystyle \sideset{}{^\natural} \sum \beta_m$$
where the symbol $\sideset{}{^\natural} \sum$ indicates the sum is taken over those $\beta$ such that there exist at least two indices $i,j$ for which $\beta_i < \alpha_i$ and $\beta_j < \alpha_j$. Noting that $\alpha_j \leq d$ for $0 \leq j \leq r+1$ we see that the contribution from these sums is at most $C_3(\Bw,r) d^2 u^r$, where $C_3(\Bw,r)$ is a number which depends on $\Bw$ and $r$ only. The existence of such a $C_3(\Bw,r)$ follows from analyzing the order of poles as above and applying Cauchy's integral formula as above. Thus, by the inclusion exclusion principle, we see that for $0 \leq m \leq r+1$
\begin{align*} \displaystyle \sigma_{I,m}(u) & = \sum_{0 \leq j \leq r+1} T_m^j(u) + O_{\Bw,d,r} \left(u^r\right) \\
& = \frac{1}{w_m \prod_{t=0}^{r+1} w_t}\left(\frac{w_0 \alpha_0 u^{r+1}}{(r+1)!} + \cdots + \frac{w_{r+1} \alpha_{r+1} u^{r+1}}{(r+1)!} - \frac{w_m \alpha_m u^{r+1}}{(r+1)!} \right) + O_{\Bw,d,r}( u^{r}) \\
& = \frac{(d - w_m \alpha_m)u^{r+1}}{(r+1)! w_m \prod_{t=0}^{r+1} w_t } + O_{\Bw,d,r}(u^{r}).\end{align*}

Now, recall that $\displaystyle u\mathcal{H}_I(u) = \frac{du^{r+1}}{r!\prod_{t=0}^{r+1} w_t} + O_{\Bw,d,r} (u^{r}),$ and hence we have, for $0 \leq m \leq r+1$,
\[ \sigma_{I,m}(u) = \frac{d - w_m \alpha_m}{(r+1) w_m d} u \mathcal{H}_I(u) + O_{\Bw,d,r}\left(u^{r} \right).\]
This completes the proof of Proposition \ref{2P2}. \end{proof}

\section{The determinant method}
\label{S3}

In this section we lay out the necessary notation for our results and state our main technical theorem. From now on we will assume that the underlying field is $\bQ$, unless otherwise stated. For brevity we put $\bP(\Bw) = \bP_\bQ(w_0, \cdots, w_{r+1})$. We are not able to deal with general weighted projective spaces. Indeed, our arguments require at least two of the weights be equal to $1$. We shall assume that $w_0 = w_1 = 1$. This will be made apparent in the proof of Theorem \ref{T1}. \\ \\
Let $I$ be the weighted homogeneous ideal generated by a primitive weighted homogeneous form 
\[F(x_0, \cdots, x_{r+1}) \in \bZ[x_0, \cdots, x_{r+1}],\]
of weighted degree $d$, and let $X$ be the corresponding hypersurface defined by $F$. Let the \emph{height} of $F$, denoted by $\lVert F \rVert$, be the largest absolute value of the coefficients of $F$. Let $<$ be the monomial grading as in Section \ref{S2}, giving rise to the constants $a_{I,0}, \cdots, a_{I,r+1}$ as in (\ref{2E4}). Let $\BB = (B_0, \cdots, B_{r+1}) \in \bR^{r+2}$ be an $(r+2)$-tuple of real numbers of size at least $1$. Our goal is to count rational points $\Bx = (x_0, \cdots, x_{r+1})$ on the hypersurface $X$, defined over $\bP(\Bw)$, such that 
\[|x_i| \leq B_i, \text{ } 0 \leq i \leq r+1.\]
Let us write
\[ w = w_{2} \cdots w_{r+1}, \]
\begin{equation} \label{3E1} V = B_0 \cdots B_{r+1}, \end{equation}
and
\begin{equation} \label{3E2} W = \left(B_0^{a_{I,0}} \cdots B_{r+1}^{a_{I,r+1}}\right)^{\frac{r+1}{r} \left(\frac{w}{d}\right)^{1/r}}. \end{equation}
Further, we will only be concerned with those rational points $\Bx \in X$ with integral representation $(x_0, \cdots, x_{r+1})$ satisfying $\gcd(x_0, x_1) = 1$. Note that any such integral representative is necessarily primitive. Let us write $X(\bQ; B_0, \cdots, B_{r+1}) = X(\bQ; \BB)$ for the set of rational points on $X$ with an integral representative $(x_0, \cdots, x_{r+1})$ satisfying $|x_i| \leq B_i$ and $\gcd(x_0, x_1) = 1$. Sometimes we will wish to count a subset of $X(\bQ;\BB)$ satisfying a certain set of congruence conditions. For each prime $p$, let us write $X_p$ for the hypersurface defined by reducing $X$ modulo $p$, viewed as a variety over $\bF_p$. Let $\P = \{p_1, \cdots, p_t\}$ be a set of primes, and let $\mathfrak{P} = (P_1, \cdots, P_t)$, with $P_j \in X_{p_j}$. Then we write
\[X(\bQ;\BB; \mathfrak{P}) = \{\Bx \in X(\bQ;\BB) : \Bx \equiv P_j \pmod{p_j}, 1 \leq j \leq t\}.  \]
A hypersurface $X \subset \bP(\Bw)$ is \emph{geometrically integral} if it is reduced and irreducible over the algebraic closure of $\bQ$; see Hartshorne \cite{<Har>}, p. 82 and p. 93. 
\begin{theorem} \label{MT3} Let $\BB = (B_0, \cdots, B_{r+1}) \in \bR^{r+2}$ be a vector of positive numbers of size at least $1$ and let $\Bw = (1,1,w_2, \cdots, w_{r+1})$ be a vector of positive integers. Let $X$ be a hypersurface in $\bP(\Bw)$ which is irreducible over $\bQ$ and defined by a primitive weighted homogeneous form $F$ in $\bZ[x_0, \cdots, x_{r+1}]$ of weighted degree $d$ with respect to $\Bw$. Let $I = \langle F \rangle$ be the weighted homogeneous ideal generated by $F$. Let $\P$ be a finite set of primes and put 
\[\Q = \prod_{p \in \P} p. \]
For each prime $p$ in $\P$ let $P_p$ be a non-singular point in $X_p$ and put
\[\mathfrak{P} = \{P_p : p \in \P\}.\]
\begin{itemize}
\item[(a)] Let $\ep > 0$. If
\[WV^\ep \leq \Q \leq WV^{2\ep}\]
then there is a hypersurface $Y(\mathfrak{P})$ containing $X(\bQ; \BB, \mathfrak{P})$, not containing $X$ and defined by a primitive form $G \in \bZ[x_0, \cdots, x_{r+1}]$, whose weighted degree satisfies
\begin{equation} \label{degg} \deg G = O_{d,r,\Bw, \ep}(1),\end{equation}
and whose height satisfies 
\begin{equation} \label{heightg} \log \lVert G \rVert = O_{d,r, \Bw, \ep}\left(\log V \right).\end{equation}
\item[(b)]
If $X$ is geometrically integral, then there exists a hypersurface $Y(\mathfrak{P})$ containing $X(\bQ; \BB, \mathfrak{P})$, not containing $X$ and defined by a primitive form $G \in \bZ[x_0, \cdots, x_{r+1}]$, whose degree satisfies
\[ \deg G = O_{\Bw, d, r}\left((1 + \Q^{-1} W) \log V \Q \right).\]
\end{itemize}
\end{theorem}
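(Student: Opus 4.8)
The plan is to run the $p$-adic determinant method of Heath--Brown and Salberger, now inside $\bP(\Bw)$, with the Hilbert-function input supplied by Propositions~\ref{2P1} and~\ref{2P2}. Fix a positive integer $\delta$, to be chosen at the end; put $E=\mathcal{H}_I(\delta)$ and let $m_1,\dots,m_E$ be the monomials of weighted degree $\delta$ not lying in $\langle\operatorname{LT}(I)\rangle$, which by Proposition~\ref{2P1} form a $\bQ$-basis of $\bQ[x_0,\dots,x_{r+1}]_{\Bw,\delta}/I_\delta$. To any finite subset $\{\Bx_1,\dots,\Bx_N\}$ of $X(\bQ;\BB,\mathfrak{P})$, taken in its primitive integral representative with $\gcd(x_0,x_1)=1$, attach the $E\times N$ integer matrix $\bigl(m_i(\Bx_j)\bigr)$. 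If its rank is $<E$, then a nonzero vector in the left kernel produces, after clearing denominators, a primitive form $G=\sum_i c_im_i\in\bZ[x_0,\dots,x_{r+1}]$ of weighted degree $\delta$ vanishing on all of $X(\bQ;\BB,\mathfrak{P})$; since the $m_i$ are independent modulo $I_\delta$ one has $G\notin I$, so the hypersurface $Y(\mathfrak{P})=\{G=0\}$ does not contain $X$. It therefore suffices to pick $\delta$ so that no $E\times E$ minor $\det\bigl(m_i(\Bx_j)\bigr)_{1\le i,j\le E}$ can be nonzero; we may of course assume $X(\bQ;\BB,\mathfrak{P})\neq\varnothing$, since otherwise the claim is trivial.

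For the upper bound, expand such a minor over permutations and estimate $|m_i(\Bx_j)|\le\prod_\ell B_\ell^{\beta_{i,\ell}}$ termwise, where $\Bx^{\bbeta_i}=m_i$; summing the exponents and using the definition~\eqref{2E1} of $\sigma_{I,\ell}$ gives $\log|\det|\le\log E!+\sum_\ell\sigma_{I,\ell}(\delta)\log B_\ell$, and Proposition~\ref{2P2} with~\eqref{2E2} turns the main term into $\delta E\sum_\ell a_{I,\ell}\log B_\ell=\delta E\log\bigl(B_0^{a_{I,0}}\cdots B_{r+1}^{a_{I,r+1}}\bigr)$, with an error $O_{d,r}(\delta^r\log V+E\log E)$. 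For the lower bound we localise $p$-adically at each $P_p$, $p\in\P$. Because $w_0=w_1=1$, the loci $\{x_0\neq0\}$ and $\{x_1\neq0\}$ of $\bP(\Bw)$ are honest affine spaces $\bA^{r+1}$, and $\gcd(x_0,x_1)=1$ forces $P_p$, hence every $\Bx_j$ reducing to it, into one of them, say $\{x_0\neq0\}$; this is precisely why two weights must equal $1$. Since $F$ is primitive, $X$ is flat over $\bZ_p$, so the smoothness of $X_p$ at $P_p$ upgrades to smoothness over $\bZ_p$ and $\widehat{\mathcal{O}}_{X,P_p}\cong\bZ_p[[t_1,\dots,t_r]]$; dehomogenising, each $m_i/x_0^\delta$ becomes on this formal neighbourhood a series $\sum_{\mathbf{e}}c_{i,\mathbf{e}}t^{\mathbf{e}}$ with $c_{i,\mathbf{e}}\in\bZ_p$, so $m_i(\Bx_j)=x_0(\Bx_j)^\delta\sum_{\mathbf{e}}c_{i,\mathbf{e}}t(\Bx_j)^{\mathbf{e}}$ with $x_0(\Bx_j)\in\bZ_p^\times$ and $t_m(\Bx_j)\in p\bZ_p$. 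In the resulting Cauchy--Binet expansion of the minor the term indexed by an $E$-element set $S$ of multi-indices is divisible by $p^{\sum_{\mathbf{e}\in S}|\mathbf{e}|}$, and the least value of $\sum_{\mathbf{e}\in S}|\mathbf{e}|$ over such $S$ is $\Phi(\delta):=\sum_{\nu\ge1}\bigl(E-\binom{\nu+r-1}{r}\bigr)^{+}$; hence $p^{\Phi(\delta)}$, and so $\Q^{\Phi(\delta)}$, divides the minor. Evaluating $\Phi(\delta)$ via~\eqref{2E2} --- with $E\sim d\delta^r/(r!\,w)$, the relevant $\nu$ running up to $\sim\delta(d/w)^{1/r}$ --- gives $\Phi(\delta)=\tfrac{r}{r+1}(d/w)^{1/r}\delta E+O_{d,r}(\delta^r)$.

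Comparing the two bounds on a hypothetical nonzero minor, cancelling the factor $\delta E$, and recalling~\eqref{3E2}, one is left with $\tfrac{r}{r+1}(d/w)^{1/r}\log(\Q/W)\le O_{d,r}\bigl((\log V\Q+\log\delta)/\delta\bigr)$. When $\Q\ge2W$ the left-hand side is bounded below by a positive constant depending on $d$ and $r$, so the inequality fails once $\delta$ is a sufficiently large multiple of $1+\log V\Q$; tracking the $d$-dependence of the error terms shows this multiple may be taken of size $O_r(d^2)$, whence the matrix has rank $<E$, giving the hypersurface of part~(a) with the degree bound~\eqref{degg}. The height bound~\eqref{heightg} is then obtained by producing $G$ not from an arbitrary kernel vector but by Siegel's lemma applied to a well-chosen $(E-1)\times(E-1)$ subsystem of the vanishing conditions, exactly as in Heath--Brown's Theorem~14; this is a routine estimate. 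For part~(b), with $X$ geometrically integral, the same argument disposes of the range $\Q\ge2W$ (the constant now allowed to depend on $d$), and for $\Q<2W$ one appeals to Salberger's global device: enlarge $\P$ by auxiliary primes of good reduction until the product of moduli exceeds $2W$, decompose $X(\bQ;\BB,\mathfrak{P})$ according to residues modulo these new primes, run the preceding argument in each class, and multiply the resulting forms. Geometric integrality is used here through the Lang--Weil estimate for the number of smooth $\bF_p$-points, which bounds the number of residue classes and hence the degree of the product; arranging the bookkeeping so that this degree is $O_{d,r}\bigl((1+\Q^{-1}W)\log V\Q\bigr)$ is the substance of the global determinant method, and would follow Salberger's argument in~\cite{S2} with $\bP^n$ replaced by $\bP(\Bw)$.

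The technical heart, and the step I expect to be the main obstacle, is the $p$-adic localisation: one must verify that the classical divisibility estimate for $\det\bigl(m_i(\Bx_j)\bigr)$ goes through at a non-singular point of a \emph{weighted} projective hypersurface. What rescues it is that $w_0=w_1=1$ places $P_p$ in an honest affine chart and that primitivity of $F$ makes $\widehat{\mathcal{O}}_{X,P_p}$ regular of dimension $r$, so that $\Phi(\delta)$ is governed by the leading term $d\delta^r/(r!\,w)$ of $\mathcal{H}_I$ in~\eqref{2E2}. Throughout, one must keep the error terms of Propositions~\ref{2P1} and~\ref{2P2} --- in particular the $d^2u^r$ contributions --- genuinely of lower order than the main term $\delta E$, which is what forces $\delta$ to be a fixed power of $d$ times $1+\log V\Q$ and accounts for the shapes of~\eqref{degg} and~\eqref{heightg}.
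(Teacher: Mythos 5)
Your part (a) is, in substance, the paper's argument (Sections~\ref{S4}--\ref{S6}): the archimedean upper bound via $\sigma_{I,\ell}$ and Proposition~\ref{2P2}, and the $p$-adic lower bound via the local ring at a smooth $\bF_p$-point. You realise the latter concretely through $\widehat{\mathcal{O}}_{X,P_p}\cong\bZ_p[[t_1,\dots,t_r]]$ and Cauchy--Binet, which reproduces the exponent $A(s)$ of Proposition~\ref{4P2}; indeed your $\Phi(\delta)=\sum_{\nu\ge1}\bigl(E-\binom{\nu+r-1}{r}\bigr)^{+}$ is exactly $A(E)$ at a non-singular point. Invoking Siegel's lemma for the height, where the paper uses cofactor expansion of a maximal non-singular minor, is a harmless variant.

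Part (b) has a real gap. You propose to enlarge $\P$ by auxiliary good primes $q_1,\dots,q_m$ until $\Q q_1\cdots q_m\ge 2W$, partition $X(\bQ;\BB,\mathfrak{P})$ into residue classes modulo the new primes, apply part (a) in each class, and take the product of the resulting forms. Since $\dim X=r$, Lang--Weil gives on the order of $(q_1\cdots q_m)^r\approx(W/\Q)^r$ residue classes, and each form from part (a) has degree $\gg\log V$; so the product has degree of order $(W/\Q)^r\log V$, which for $r\ge2$ is far larger than the target $O_{d,r}\bigl((1+\Q^{-1}W)\log V\Q\bigr)$. This is exactly the exponential loss that the global determinant method is designed to avoid, and the paper's proof of (b) is structurally different: it keeps a \emph{single} $s\times s$ minor $\Delta$ and harvests $p$-adic divisibility \emph{simultaneously} from all primes $p\le s^{1/r}$ with $p\nmid\Q\pi_X$, using Lemma~\ref{6Lem1} (the weighted analogue of Salberger's Lemma 1.4), which applies across \emph{every} residue class of $X_p$ at once via H\"older's inequality. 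Geometric integrality then enters through Lemma~\ref{6Lem2} ($n_p^{1/r}=p+O(dp^{1/2})$) and through Lemma~\ref{5Lem3} (controlling $\pi_X$, which your sketch does not mention), so that the Mertens-type sum $\sideset{}{^\ast}\sum_{p\le s^{1/r}}\log p/p$ contributes an extra factor of order $r^{-1}\log s$ that cancels against $\log(W^r/\Q^r s)$ in the comparison. It is this mechanism, not a decomposition-and-product device, that permits the choice $s\approx_{d,r}\max\{\Q^{-r}W^r(\log V)^r,(\log V)^r\}$ and hence $\deg Y=u=O_{d,r}\bigl((1+\Q^{-1}W)\log V\Q\bigr)$. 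As written, your sketch of (b) neither produces that sum nor gives the stated degree bound.
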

The second part of Theorem \ref{MT3} is a generalization of Salberger's Theorem 2.2 in \cite{<S2>} to the case of weighted projective hypersurfaces, and the first part is a generalization of Salberger's Lemma 2.8 in \cite{<S2>}. Lemma 2.8 in \cite{<S2>} is itself an extension of Heath-Brown's Theorem 14 in \cite{<HB1>}. In fact, both theorems are recovered if we set $\Bw = (1,1, \cdots, 1)$. We note that, unlike earlier formulations when $\Q \geq WV^\ep$, the dependence of the logarithm of the height of $G$ on the degree $d$ and the dependence of the degree of $G$ on the degree $d$ of $F$ and the parameter $\ep$ is explicit with the remaining constant depending only on the dimension $r$ and the weight vector $\Bw$.  \\ \\
M.~ Walsh was able to obtain an improved version of Theorem 1.1 of \cite{<S2>} in \cite{<Wal>}. This corresponds to the case $\P = \emptyset$ in Theorem \ref{MT3}. His improvement was to show that one can obtain a saving of $\log (\lVert F \rVert +1) \lVert F \rVert^{-r^{-1} d^{-(r+1)/r}}$ on the estimate for the degree of the form $G$. \\ \\
Theorem \ref{MT3} is the main technical result of this paper. We will use it to carry out an inductive argument similar to Salberger's proof of Lemma 3.1 in his paper \cite{<S2>}. \\ \\
We will complete the proof of Theorem \ref{MT3} in the next three sections. 

\section{Large divisors of the determinant}
\label{S4} 

Our next theorem produces a prime power divisor of a determinant of the form $\det(M_j(\boldsymbol \xi_l))$, where $M_1, \cdots, M_s$ are monomials of the same weighted degree and where $\boldsymbol \xi_l \in \bZ^{r+2}$, $1 \leq l \leq s$ are all congruent to a point $P \in X_p$. The additional assumption that these tuples are congruent to some point $P \in X_p$ as opposed to the weaker assumption that they are merely congruent modulo $p$ gives the extra geometric information that allows us to produce a divisor which is larger. Indeed, if we assume only that $\boldsymbol \xi_l \equiv \boldsymbol \xi_j \pmod{p}$ for $1 \leq j, l \leq s$, then by taking differences of columns we can produce a factor of $p$ in each column, thereby allowing us to conclude that $p^{s-1} | \det(M_j(\boldsymbol \xi_l))$. However, our next theorem shows that for sufficiently large $s$, we can produce a larger power of $p$ which divides $\det (M_j(\boldsymbol \xi_l))$. We aim to establish the following:
\begin{theorem} \label{T1} Let $\Bw = (1,1,w_2, \cdots, w_{r+1})$ be a weight vector, $p$ be a prime, $X$ be a hypersurface of degree $d$ in $\bP(\Bw)$, and $P$ be an $\bF_p$ point of multiplicity $m_P$ on $X_p$. Suppose there are $s$ distinct  primitive $(r+2)$-tuples of integers on $X$
\[\boldsymbol\xi_1, \cdots, \boldsymbol\xi_s\] 
with reduction $P$, such that $\gcd(\xi_{0,l}, \xi_{1,l})= 1$ for $1 \leq l \leq s$. If $M_1, \cdots, M_s$ are monomials in $(x_0, \cdots, x_{r+1})$ of the same weighted degree, then there exists a positive number $\kappa(d,r)$, depending on $d$ and $r$, such that the determinant of the $s \times s$ matrix $(M_j(\boldsymbol \xi_l))$ is divisible by $p^N$, where
\[N > \left(\frac{r!}{m_P}\right)^{\frac{1}{r}} \cdot \frac{r}{r+1}\cdot s^{1 + \frac{1}{r}} - \kappa(d,r)s.\]
If $P$ is non-singular, so $m_P = 1$, then there exists a positive number $\kappa'(r)$, depending only on $r$, such that 
\[N > (r!)^{1/r} \frac{r}{r+1} s^{1 + \frac{1}{r}} - \kappa'( r)s.\]
\end{theorem}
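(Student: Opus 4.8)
The plan is to follow the $p$-adic determinant method of Heath-Brown and Salberger, adapted to the weighted setting, with the Hilbert-function input supplied by Proposition~\ref{2P2}. First I would fix a large integer $u$ (to be chosen of size roughly $s^{1/r}$ after the dust settles) and consider the $\bF_p$-vector space $K[x_0,\dots,x_{r+1}]_{\Bw,u}$ of weighted-homogeneous polynomials of weighted degree $u$, together with the subspace $I_u$ cut out by the ideal of $X_p$. One passes to the local ring of $X_p$ at $P$: after a weighted change of coordinates moving $P$ to a standard affine chart (using that $w_0=w_1=1$, so that one of the first two coordinates is a unit at $P$ and we may dehomogenize), the key numerical fact is that the image of the monomials $\Bx^{\boldsymbol\beta}$ of weighted degree $u$ not lying in $\langle\operatorname{LT}(I)\rangle$, expanded in a local parameter system at $P$, can be organized by order of vanishing at $P$. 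The point of multiplicity $m_P$ enters exactly here: the number of these monomials vanishing to order $<t$ at $P$ is at most $\sim \frac{m_P}{r!}t^r$, by the standard local estimate for the Hilbert function of the local ring $\O_{X_p,P}$ together with the global count $\H_I(u)\sim \frac{d u^r}{r!\,w_0\cdots w_{r+1}}$ from \eqref{2E2}. Consequently, ordering a basis of monomials by their order of vanishing at $P$ and writing $e_i$ for the order of the $i$-th basis element, one gets the crucial lower bound
\[
\sum_{i=1}^{\H_I(u)} e_i \;\geq\; \left(\frac{r!}{m_P}\right)^{1/r}\frac{r}{r+1}\,\H_I(u)^{1+1/r} - O_{d,r}\!\left(\H_I(u)\right),
\]
by comparing the sum $\sum e_i$ against $\int_0^{T} t\cdot \frac{m_P}{(r-1)!}t^{r-1}\,dt$ where $T$ is chosen so that $\frac{m_P}{r!}T^r = \H_I(u)$.

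Next I would exploit congruence: since all the $\boldsymbol\xi_l$ reduce to the same point $P$ on $X_p$ and each is primitive with $\gcd(\xi_{0,l},\xi_{1,l})=1$, the value $M_j(\boldsymbol\xi_l)$ of a monomial of weighted degree $u$, read in the local coordinate system at $P$, is divisible by $p^{e}$ whenever $M_j$ lies in the $e$-th filtration step of the local ring at $P$ — this is the weighted analogue of "differencing columns," carried out simultaneously to all orders at once rather than just to first order. More precisely, choosing the monomials $M_1,\dots,M_s$ and performing row/column operations over $\bZ_p$ to replace the matrix $(M_j(\boldsymbol\xi_l))$ by one whose $i$-th column is divisible by $p^{e_i}$, one concludes
\[
\ord_p\bigl(\det(M_j(\boldsymbol\xi_l))\bigr) \;\geq\; \sum_{i=1}^{s} e_i - O_{d,r}(s),
\]
where now the $e_i$ are the orders of vanishing at $P$ of the $s$ monomials actually occurring (we only need $s$ of them, and we take the $s$ with smallest orders, but the estimate on the partial sum is of the same shape). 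The $O_{d,r}(s)$ absorbs the bounded defect coming from the fact that the $\boldsymbol\xi_l$ need not literally be congruent modulo higher powers of $p$ — only the \emph{geometry} at $P$ is controlled — and from the finitely many "exceptional" coordinate denominators; making this error genuinely $O(s)$ with a constant $\kappa(d,r)$ is where the degree $d$ dependence in the statement comes from.

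Combining the two displays and choosing $u$ so that $\H_I(u)$ is comparable to $s$ (which is legitimate because $\H_I(u)$ grows like $u^r$, so for each $s$ there is a valid $u$ with $\H_I(u)\geq s$ and $\H_I(u)=s+O_{d,r}(s^{1-1/r})$) gives
\[
N \;=\; \ord_p(\det) \;>\; \left(\frac{r!}{m_P}\right)^{1/r}\frac{r}{r+1}\,s^{1+1/r} - \kappa(d,r)\,s,
\]
which is the first assertion; the non-singular case $m_P=1$ is immediate, with $\kappa'(r)$ replacing $\kappa(d,r)$ because for a smooth point the local Hilbert function is \emph{exactly} $\binom{t+r-1}{r-1}$ independently of $d$, so the error term no longer needs to reference $d$. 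The main obstacle I expect is the second paragraph: carefully setting up the local coordinate system at $P$ in a well-formed weighted projective space so that "order of vanishing at $P$" genuinely translates into a power of $p$ dividing $M_j(\boldsymbol\xi_l)$ — the weights cause the naive $\fm_P$-adic filtration and the grading to interact nontrivially, and one must check that the primitivity conditions $\gcd(\xi_{0,l},\xi_{1,l})=1$ are exactly what is needed to ensure the local unit (one of the first two coordinates) is a $p$-adic unit at $\boldsymbol\xi_l$, so that dehomogenizing loses no $p$-power. The Hilbert-function estimates, by contrast, are routine once Proposition~\ref{2P2} and \eqref{2E2} are in hand.
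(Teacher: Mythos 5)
Your plan is essentially the paper's argument: dehomogenize using $\gcd(\xi_{0,l},\xi_{1,l})=1$ together with $w_0=w_1=1$, pass to the local ring of $X_p$ at $P$, use the local Hilbert function (Proposition~\ref{4P1}) to estimate the cumulative order of vanishing, and convert this, via the evaluation homomorphisms $R\to\bZ_p$ at the $\boldsymbol\xi_l$, into a $p$-power dividing the determinant. Two small clarifications relative to what the paper actually does: the global Hilbert function $\mathcal{H}_I(u)$ plays no role here — only the local quantity $g_{X,P}$ at $P$ matters, since the divisibility exponent $A(s)$ of Proposition~\ref{4P2} depends only on $s$ and $m_P$ and not on a chosen degree $u$ — and the ``order-of-vanishing translates to $p$-power'' step that you flag as the main obstacle is handled cleanly by the abstract commutative-algebra statement of Proposition~\ref{4P2} (Salberger's Lemma~2.4), which yields $p^{A(s)}\mid\det$ with \emph{no} error term at that stage; the $-\kappa(d,r)s$ loss enters only afterwards, when $A(s)$ is compared with the closed form $(r!/m_P)^{1/r}\tfrac{r}{r+1}s^{1+1/r}$.
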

We will prove Theorem \ref{T1} by means of the next two propositions; corresponding to Lemmas 2.3 and 2.4 respectively in \cite{<S1>}. We note here that for the proof of Theorem \ref{T1} we require that two of the weights be $1$. This is the only part of the paper where we need to make such an assumption. \\

We remark that this restriction can be removed if we a priori pick monomials whose weighted degrees are a multiple of the least common multiple of all of the weights, and indeed this opens up the possibility to extend the determinant method to all weighted projective spaces. However the extra technical details take us too far afield in the present paper. We would like to return to this issue in the future. 

\begin{proposition} \label{4P1} Let $\Bw = (1,1,\cdots, w_{r+1})$ be a weight vector, $X$ a hypersurface of weighted degree $d$ in $\bP(\Bw)$, $p$ a prime and $P$ an $\bF_p$-point of multiplicity $m_P$ on $X_p$. Write $A$ for the local ring of regular functions at $P$ and $\fm$ for the maximal ideal of $A$. For each positive integer $t$ put $g_{X,P}(t) = \dim_{A/\fm} \fm^t / \fm^{t+1}$. Then, we have 
\[g_{X,P}(t) = \frac{m_P t^{r-1}}{(r-1)!} + O_{ d,r}( t^{r-2}).\]
If $m_P = 1$, then we obtain the more refined assertion that
\[g_{X,P}(t) = \frac{t^{r-1}}{(r-1)!} + O_{r}(t^{r-2}).\]
\end{proposition}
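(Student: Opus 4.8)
The plan is to reduce the statement to a purely affine, local computation, and then to identify the associated graded ring of $A$ with the coordinate ring of the tangent cone of $X$ at $P$; for a hypersurface this cone is cut out by a single homogeneous form whose degree is precisely the multiplicity $m_P$, and the estimate then follows by expanding a difference of two binomial coefficients.

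First I would pass to an affine chart. Since $w_0 = 1$ (and likewise $w_1 = 1$), the locus $\{x_0 \ne 0\} \subset \bP(\Bw)$ is genuinely isomorphic to $\bA^{r+1}$, with no quotient singularity, via $[x_0:\cdots:x_{r+1}] \mapsto (x_1 x_0^{-w_1}, \ldots, x_{r+1} x_0^{-w_{r+1}})$, and the same holds for $\{x_1 \ne 0\}$. Because $g_{X,P}(t)$ depends only on the local ring $A = \O_{X,P}$, and every point $P$ to which Theorem~\ref{T1} is applied admits a primitive representative with first two coordinates coprime, hence lies in one of these two charts, I may assume $P$ lies in $\{x_0 \ne 0\}$. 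Dehomogenising $F$ then yields a polynomial $f \in k[y_1, \ldots, y_{r+1}]$ with $k = A/\fm$, and $A$ is the localisation of $k[y_1,\ldots,y_{r+1}]/(f)$ at the maximal ideal of the image of $P$, a local ring of dimension $r$; replacing $k$ by its algebraic closure changes none of the dimensions below, so I may as well take $k$ algebraically closed.

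Next I would identify $\operatorname{gr}_\fm(A) = \bigoplus_{t \ge 0}\fm^t/\fm^{t+1}$. After translating $P$ to the origin, let $f^*$ denote the initial form of $f$, its nonzero homogeneous component of least degree; that degree equals $\ord_P(f) = m_P$. The canonical surjection $k[y_1,\ldots,y_{r+1}] = \operatorname{gr}(k[y]) \twoheadrightarrow \operatorname{gr}_\fm(A)$ has kernel the ideal of initial forms of all elements of $(f)$, and since the polynomial ring is a domain we have $\ord(hf) = \ord(h)+\ord(f)$ for all $h$, so that kernel is exactly the principal ideal $(f^*)$. Consequently
\[ g_{X,P}(t) = \dim_k \bigl( k[y_1,\ldots,y_{r+1}]/(f^*) \bigr)_t = \binom{t+r}{r} - \binom{t - m_P + r}{r}, \]
the last term read as $0$ when $t < m_P$. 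Expanding the two binomials as polynomials in $t$ (leading coefficient $1/r!$, remaining coefficients depending only on $r$), the difference has $t^{r-1}$-coefficient $m_P/(r-1)!$ and all lower-order coefficients bounded in terms of $m_P$ and $r$; since $m_P \le d$ for a point on a hypersurface of degree $d$, this gives $\frac{m_P t^{r-1}}{(r-1)!} + O_{d,r}(t^{r-2})$. When $m_P = 1$ the expression collapses to $\binom{t+r}{r} - \binom{t+r-1}{r} = \binom{t+r-1}{r-1}$, whose expansion has leading term $\frac{t^{r-1}}{(r-1)!}$ with all lower coefficients depending only on $r$, yielding the sharper $O_r(t^{r-2})$ with no dependence on $d$.

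I do not anticipate a serious obstacle: this is the weighted analogue of Lemma~2.3 in \cite{S1}, and the weighted structure disappears as soon as one is in the chart $x_0 = 1$. The only steps that warrant care are the identification of the initial ideal of the principal ideal $(f)$ (where one uses that the associated graded ring of a regular local ring is a domain) and the bookkeeping that keeps the error term dependent only on $d$ and $r$, respectively only on $r$ in the smooth case.
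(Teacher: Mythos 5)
Your proof is correct, and it takes a genuinely different and more explicit route than the paper's.

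The paper argues abstractly: it views the projectivized tangent cone $W_P$ as a subscheme of $\bP^r$, invokes Broberg's lemma to bound the number of generators and the degrees in a Gr\"obner basis of its homogeneous ideal in terms of $d$ and $r$, and concludes there are only finitely many candidate Hilbert functions; the asymptotic then comes from the shape of the Hilbert series $Q(x)/(1-x)^r$ with $Q(1) = m_P$. You instead make the key observation the paper never states explicitly for general $m_P$: since $\operatorname{gr}$ of the ambient regular local ring is the polynomial ring $k[y_1,\ldots,y_{r+1}]$, a domain, the initial ideal of a principal ideal $(f)$ is the principal ideal $(f^*)$, so the tangent cone at \emph{every} point of a hypersurface is itself a hypersurface of degree $m_P$. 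This gives the exact closed form $g_{X,P}(t) = \binom{t+r}{r} - \binom{t+r-m_P}{r}$, from which both cases of the proposition drop out of a single binomial expansion. What your route buys is an explicit formula and uniform treatment of the singular and non-singular cases, avoiding the Gr\"obner-degree bookkeeping entirely; what the paper's route buys is that it would apply to tangent cones that are not hypersurfaces (irrelevant here, but the paper seems not to have noticed the simplification). One small caveat you handle correctly yourself: your chart-reduction requires $P$ to lie in $\{x_0 \ne 0\} \cup \{x_1 \ne 0\}$, which the bare statement of the proposition does not assume, but which is guaranteed in every application (via the condition $\gcd(\xi_0,\xi_1)=1$); for a point inside the singular locus of $\bP(\Bw)$, the ambient local ring would no longer be regular and the identification $\operatorname{in}((f))=(f^*)$ would need revisiting. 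Since the paper's own proof also only appeals to the smooth-ambient picture, your explicit flagging of this restriction is actually an improvement in clarity.
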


\begin{proof} Write $\B = \displaystyle \bigoplus_{t \geq 0} (\fm^t / \fm^{t+1})$. By definition, the projectivized tangent cone at $P$ is defined to be the $\operatorname{Proj}(\B)$, see Exercise III-29 in \cite{<EH>}. Since $A/\fm \cong \bF_p$ is a field, it follows that $g_{X,P}(t)$ is precisely the Hilbert function of the projectivized tangent cone at $P$, say $W_P$. Note that $W_P$ is a subvariety of the Zariski tangent space of $X$ at $P$, which is isomorphic to $\bP_{\bF_p}^r$. Hence, we can consider the homogeneous ideal of $W_P$, which is generated by $C_4(d, r)$ many forms; see III.3 of \cite{<Mum>}. Note that this bound depends only on $d$ and $r$. Following Lemma 1 of \cite{Bro}, we may choose a Groebner basis of forms of degree $C_5( d, r)$ for the homogeneous ideal of $W_P$. By Proposition \ref{2P1}, the Hilbert function does not change if we replace this ideal with the ideal generated by its leading terms. Hence, there are only finitely many candidates for Hilbert functions of $W_P$ for points $P$ of multiplicity $m_P = O_{\Bw, d}(1)$. More precisely, the number of candidates is bounded by the number of monomials in $r-1$ variables of degree at most $C_5(d, r)$. Thus, there are at most $C_6(d, r)$ such functions.  \\ \\
Let us now fix a particular 
\[g_{X,P}(t) = \frac{m_P t^{r-1}}{(r-1)!} + O_{P,r}(t^{r-2}).\]
To obtain the estimate for the coefficient in front of the big-$O$ term, one notes that there exists a polynomial $Q(x)$ with integer coefficients with $Q(1) \ne 0$ such that the Hilbert series of the projectivized tangent cone is given by
\[\frac{Q(x)}{(1 - x)^{r}}, \]
see Chapter 9 of \cite{<CLO>}. From here we see from Proposition \ref{2P2} that the error term is at most an absolute constant times $m_P^{r-1}$. Since $m_P = O_{\Bw, d}(1)$, the claim follows. \\ \\
If $m_P = 1$, then it is known (see III.3 in \cite{<Mum>}) that the ideal of the tangent cone at $P$ is generated by a single polynomial of degree $1$. Hence, we can replace $C_4(d, r), C_5(d, r)$, and $C_6(d, r)$ with numbers that depend at most on $r$. \end{proof}
We shall denote by $\bZ_p$ the ring of $p$-adic integers. Let $R$ be a commutative noetherian local ring containing $\bZ_p$ as a subring, $\R = R/pR$, and $\mathfrak{m}$ be the maximal ideal of $\R$. We then have the following proposition:
\begin{proposition} \label{4P2} Let $(n_l(\R))_{l = 1}^\infty$ be the non-decreasing sequence of integers $t \geq 0$, where $t$ occurs exactly $\dim_{\R/\mathfrak{m}} \mathfrak{m}^t/\mathfrak{m}^{t+1}$ times. Let $r_1, \cdots, r_s$ be elements of $R$ and $\phi_1, \cdots, \phi_s$ be ring homomorphisms from $R$ to $\bZ_p$. Then, the determinant of the $s \times s$ matrix $(\phi_i(r_j))$ is divisible by $p^{A(s)}$ for $A(s) = n_1(\R) + \cdots + n_s(\R)$. 
\end{proposition}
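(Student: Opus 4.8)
The plan is to exploit the filtration of $\R = R/pR$ by powers of its maximal ideal $\mathfrak{m}$, and to perform column operations on the matrix $(\phi_i(r_j))$ over $\bZ_p$ that mirror this filtration. The key observation is that a $\bZ_p$-linear combination of the $r_j$'s lying in $\mathfrak{m}^t$ (after reduction mod $p$) forces the corresponding column of $\phi$-values to be divisible by $p$ to a degree controlled by $t$. More precisely, first I would fix a system of representatives for a basis of each graded piece $\mathfrak{m}^t/\mathfrak{m}^{t+1}$. Lifting these back to elements of $R$ and multiplying by appropriate powers of $p$, one builds, by downward induction on $t$, new elements $\tilde r_1, \dots, \tilde r_s$ obtained from $r_1,\dots,r_s$ by a unimodular (determinant $\pm 1$) $\bZ_p$-linear change of columns, such that $\tilde r_j$ reduces mod $p$ into $\mathfrak{m}^{n_j(\R)}$ but not into a higher power in a way that is part of the chosen basis. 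The point is that such column operations do not change the determinant up to a unit, so it suffices to bound the $p$-adic valuation of $\det(\phi_i(\tilde r_j))$.

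Next I would argue that for each $j$, the column $(\phi_1(\tilde r_j), \dots, \phi_s(\tilde r_j))^{\mathsf T}$ is divisible by $p^{n_j(\R)}$. This is because $\tilde r_j \bmod p \in \mathfrak{m}^{n_j(\R)}$ means $\tilde r_j \in pR + \mathfrak{M}^{n_j(\R)}$, where $\mathfrak{M}$ is a lift of $\mathfrak{m}$ to $R$; applying any ring homomorphism $\phi_i: R \to \bZ_p$ sends $pR$ into $p\bZ_p$ and sends $\mathfrak{M}$ into $\phi_i(\mathfrak{M})$. The subtlety is that $\phi_i(\mathfrak{M})$ need not lie in $p\bZ_p$ on the nose, but $\phi_i$ reduces to a homomorphism $\R \to \bZ_p/p\bZ_p = \bF_p$, and the image of $\mathfrak{m}$ under a ring homomorphism to a field is either all of the field or zero; since $\mathfrak{m}$ is the maximal ideal and $\bF_p$ a field, the composite $\R \to \bF_p$ must kill $\mathfrak{m}$ (it factors through $\R/\mathfrak{m}$, which maps to $\bF_p$, and the preimage of $0$ contains $\mathfrak{m}$). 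Hence $\phi_i(\mathfrak{M}) \subseteq p\bZ_p$, so $\phi_i(\mathfrak{M}^{n_j(\R)}) \subseteq p^{n_j(\R)}\bZ_p$, and therefore $\phi_i(\tilde r_j) \in p^{n_j(\R)}\bZ_p$ for every $i$.

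Pulling out the factor $p^{n_j(\R)}$ from the $j$-th column for each $j$, expansion of the determinant gives
\[
\det(\phi_i(\tilde r_j)) = p^{n_1(\R) + \cdots + n_s(\R)} \det(M')
\]
for some matrix $M'$ over $\bZ_p$, so $p^{A(s)} \mid \det(\phi_i(\tilde r_j))$, and since the column operations only introduced a unit, $p^{A(s)} \mid \det(\phi_i(r_j))$ as well. The main obstacle is the bookkeeping in the inductive construction of the $\tilde r_j$: one must verify that the elements of $R$ chosen to realize a basis of $\bigoplus_t \mathfrak{m}^t/\mathfrak{m}^{t+1}$ can be arranged, via integral column operations over $\bZ_p$, starting from an arbitrary list $r_1,\dots,r_s$, into a list respecting the valuation filtration — this is essentially a $p$-adic echelon-form argument over the local ring, and care is needed because $R$ is only noetherian local, not a DVR, so one works with the reduction $\R$ and lifts. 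Once that normal form is in place, the divisibility count is immediate from the field-quotient argument above.
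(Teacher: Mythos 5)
There is a genuine gap, and it sits exactly at the step you flag as the crux of the argument. Having arranged, by unimodular column operations over $\bZ_p$, that $\tilde r_j \bmod p \in \fm^{n_j(\R)}$, you conclude that $\tilde r_j \in pR + \mathfrak{M}^{n_j(\R)}$ (correct) and hence that $\phi_i(\tilde r_j) \in p^{n_j(\R)}\bZ_p$ (not correct). Writing $\tilde r_j = m_j + p\,r_j'$ with $m_j \in \mathfrak{M}^{n_j(\R)}$ and $r_j' \in R$, you do get $\phi_i(m_j) \in p^{n_j(\R)}\bZ_p$, but the second summand only gives $\phi_i(p\,r_j') \in p\bZ_p$. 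So in total $\phi_i(\tilde r_j) \in p^{\min(1,\,n_j(\R))}\bZ_p$, and for any column with $n_j(\R) \geq 2$ the claimed factor of $p^{n_j(\R)}$ simply does not come out of that column. A concrete instance: take $R = \bZ_p[[x]]$, so $\R = \bF_p[[x]]$, $\fm = (x)$, $g(t)=1$ for all $t$, hence $n_1 = 0$, $n_2 = 1$, $n_3 = 2$. Let each $\phi_i$ be evaluation $x \mapsto \xi_i$ with $\xi_i \in p\bZ_p$, and take $r_1 = 1$, $r_2 = x$, $r_3 = x^2 + p$. These already satisfy $\bar r_j \in \fm^{n_j}$, so no column operation is needed in your normal form, yet $\phi_i(r_3) = \xi_i^2 + p$ has $p$-adic valuation exactly $1$, not $2$. (In this example one \emph{can} recover the correct total power of $p$ by a further column operation $r_3 \mapsto r_3 - p\,r_1$, but this rescue relies on $p\,r_3'$ happening to lie in the $\bZ_p$-span of the other $\tilde r_k$'s modulo $\mathfrak{M}^{n_3}$, and there is no reason for that to hold in general: $r_j'$ is an arbitrary element of $R$, while only $s$ scalar column operations are available.)

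The deeper point is that the obvious replacement --- work with the $\mathfrak{M}$-adic filtration of $R$ itself, so that $\tilde r_j \in \mathfrak{M}^{m_j}$ on the nose and the factor $p^{m_j}$ genuinely comes out --- proves a \emph{weaker} statement, because there is only a surjection $\mathfrak{M}^t/\mathfrak{M}^{t+1} \twoheadrightarrow \fm^t/\fm^{t+1}$, so $\dim \mathfrak{M}^t/\mathfrak{M}^{t+1} \geq g(t)$, and the resulting exponent is $\leq A(s)$ (strictly less in the $\bZ_p[[x]]$ example above once $s \geq 3$). So the sharp exponent $A(s)$, phrased through the Hilbert function of $\R = R/pR$ rather than of $R$, cannot be extracted column-by-column in the way you describe. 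The intended argument has to use the ring structure more globally: for instance, one can set $\mathfrak{J}_t = \bigcap_i \phi_i^{-1}(p^t\bZ_p)$, observe that $R/\mathfrak{J}_t$ embeds in $(\bZ/p^t\bZ)^s$, filter $R/\mathfrak{J}_t$ by the images of the $p^aR$, bound each graded piece by $\dim_{\bF_p}\R/\fm^{t-a}$ (which is where $g$ enters, because $p^a\mathfrak{M}^{t-a} \subseteq \mathfrak{J}_t$), sum to get $\operatorname{length}(R/\mathfrak{J}_t) \leq \sum_{n<t}(t-n)g(n)$, and then translate this into a lower bound on the elementary divisors of the lattice $\Phi(R) \subseteq \bZ_p^s$ containing all the columns. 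Taking $t = n_s$ gives precisely $A(s)$. Your surrounding observations are sound --- that $\bar\phi_i$ kills $\fm$ because it is a surjection from the local ring $\R$ onto the field $\bF_p$, and that the $\phi_i$ are $\bZ_p$-linear so column operations make sense --- but the single column-extraction step does not close.
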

\begin{proof} This is the same as the proof of Lemma 2.4 in \cite{<S1>}.\end{proof}

\begin{proof} (Theorem \ref{T1}) Let $R$ be the local ring of $X$ over $\bZ_p$ at the point $P$ with respect to the weight vector $\Bw = (1, 1, w_{2}, \cdots, w_{r+1})$ and $\R = R/pR$. Since $\gcd(x_0, x_{1}) = 1$, there exists some index $j = 0,1$ such that $p \nmid x_j$. Without loss of generality, suppose that $p \nmid x_0$. Then we can replace $M_j(x_0, \cdots, x_{r+1})$ with 
\[M_j\left(1, \frac{x_1}{x_0},\frac{x_{2}}{x_0^{w_2}}, \cdots, \frac{x_{r+1}}{x_0^{w_{r+1}}}\right)\]
without changing the $p$-adic valuation of $\det(M_j(\boldsymbol \xi_l))$. These rational functions are elements of $R$. We consider the evaluation maps at the points $\boldsymbol \xi_1, \cdots, \boldsymbol\xi_s$, which are homomorphisms  from $R$ to $\bZ_p$. Since $\bZ_p \subset R$, the conditions for the ring appearing in Proposition \ref{4P2} is satisfied. Thus it follows that
\[ p^{A(s)} | \Delta.\]
It remains to estimate $A(s)$. Let $g = g_{X,P}$ be as in proposition \ref{4P1} and set $G(t) = g(0) + g(1) + \cdots + g(t).$ Since $g(t) = m_P t^{r-1}/(r-1)! + O_{d,r}(t^{r-2}),$ it follows that
\[ G(t) = \frac{m_P t^{r}}{r!} + O_{d,r}( t^{r-1}).\]
By the definition of $g$ and $(n_l(\R))$, it follows that
\[ A(G(t)) = g(1) + \cdots + tg(t) = \frac{m_P t^{r+1}}{(r+1)(r-1)!} + O_{d,r}( t^{r}),\]
and explicitly we have
\begin{align*} \left(\frac{r!}{m_P}\right)^{\frac{1}{r}} G(t)^{1 + \frac{1}{r}} & = \left(\frac{r!}{m_P}\right)^{\frac{1}{r}} \left(\frac{m_P t^{r}}{r!} + O_{d,r}( t^{r-1})\right)^{1 + \frac{1}{r}} \\
& =  \frac{m_P t^{r+1}}{r!} + O_{d,r}( t^{r}). \end{align*}
Multiplying by $r/(r+1)$ gives
\[ A(G(t)) = \left(\frac{r!}{m_P}\right)^{\frac{1}{r}} \left(\frac{r}{r+1}\right) G(t)^{1 + \frac{1}{r}} + O_{d,r}( G(t)),\]
since $t^{r} = O_{\Bw, d, r}( G(t)).$ The fact that
\[ A(s) = \left(\frac{r!}{m_P}\right)^{\frac{1}{r}} \left(\frac{r}{r+1}\right) s^{1 + \frac{1}{r}} + O_{d,r}( s)\]
follows from the observation that if $t$ is the unique integer such that $G(t-1) < s \leq G(t)$, then 
\[ 0 \leq A(G(t)) - A(s) \leq tg(t) \leq \frac{m_P t^{r}}{(r-1)!} + O_{d,r}( t^{r-1}) \leq rs + O_{d,r}( s^{1 - \frac{1}{r}}),\]
and
\[ 0 \leq G(t)^{1 + \frac{1}{r}} - s^{1 + \frac{1}{r}} \leq G(t)^{1 + \frac{1}{r}} - G(t-1)^{1 + \frac{1}{r}} = O_{d,r}( t^{r}) = O_{d,r}( s).\] 
If $m_P = 1$, then by Proposition \ref{4P1} the constants in front of the error terms may be replaced with a number which depends on $r$ only. 
\end{proof}

We now proceed to give estimates for products of various `bad' primes with respect to a geometrically integral hypersurface $X \subset \bP(\Bw)$. 
\begin{definition} \label{D3} Let $X$ be a geometrically integral hypersurface in $\bP(\Bw)$ of degree $d$. We write $\pi_X$ for the product of all primes $p$ for which $X_p$ is not geometrically integral. 
\end{definition}
Let us denote by $R_{r+1}(d)$ the number of distinct monomials in $x_0, \cdots, x_{r+1}$ of weighted degree $d$ with respect to the weight vector $\Bw = (1, 1, w_{2}, \cdots, w_{r+1})$. \\ \\
The next lemma allows us to capture whether a given polynomial is irreducible over $\ol{\bQ}$ or not by considering a finite set of universal polynomials. This was first proved by Salberger in \cite{<S1>}, and Lemma \ref{5Lem2} below is essentially the same as Lemma 1.8 in \cite{<S2>}, except over weighted projective space. \\ \\
Denote by $\S_d$ the set of vectors $\boldsymbol \beta \in \bZ_{\geq 0}^{r+2}$ such that $\boldsymbol \beta \cdot \Bw = d$. Note that
\[\# \S_d = R_{r+1}(d).\]
Let the elements in $\S_d$ be enumerated by $\bbeta_1, \cdots, \bbeta_{R_{r+1}(d)}.$ 
\begin{lemma} \label{5Lem2} Let $d$ be a positive integer. Then there exists a finite set of universal forms 
\[ \Phi_1(a_{1}, \cdots, a_{{R_{r+1}(d)}}), \cdots, \Phi_t(a_{1}, \cdots, a_{{R_{r+1}(d)}}), \]
with the following property. Whenever the variables $a_{j}$ take values in a field $K$, the form 
\[ F(x_0, \cdots, x_{r+1}) = \sum_{j=1}^{R_{r+1}(d)} a_{j} \Bx^{\bbeta_j} \]
is absolutely irreducible over $K$ if and only if $\Phi_i(a_{1}, \cdots, a_{{R_{r+1}(d)}}) \ne 0$ in $K$ for some $i \in \{1, \cdots, t\}$.
\end{lemma}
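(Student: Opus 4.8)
The plan is to reduce the statement to the classical fact that absolute irreducibility of a form over an algebraically closed field is a constructible condition on its coefficients, and then transport this from ordinary projective space to the weighted setting via the standard ``cover by an ordinary projective space'' trick. First I would recall the analogous statement in ordinary projective space, which is exactly Salberger's Lemma 1.8 in \cite{S2} (and is ultimately a consequence of elimination theory: the locus of coefficient vectors for which $F$ factors is the image of a projective morphism, hence Zariski closed, so its complement is the union of the non-vanishing loci of finitely many polynomials $\Phi_1, \dots, \Phi_t$ in the coefficients). The key observation is that a weighted homogeneous form $F(x_0, \dots, x_{r+1})$ of weighted degree $d$ with respect to $\Bw = (1,1,w_2,\dots,w_{r+1})$, when we substitute $x_i \mapsto x_i$ for $i = 0,1$ and $x_i \mapsto$ (a product of ordinary variables realizing weight $w_i$), pulls back to an ordinary form; more cleanly, one uses the finite surjection $\bP^{r+1}_K \to \bP_K(\Bw)$ (or an appropriate weighted blow-down) under which weighted homogeneous forms of weighted degree $d$ correspond to a linear subspace of the ordinary forms of degree $d$, and absolute irreducibility is preserved in the relevant direction.

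The steps, in order, would be: (1) fix the enumeration $\bbeta_1, \dots, \bbeta_{R_{r+1}(d)}$ of $\S_d$ and regard the generic weighted form $F = \sum_j a_j \Bx^{\bbeta_j}$ as a form with coefficients in the polynomial ring $K[a_1, \dots, a_{R_{r+1}(d)}]$; (2) exhibit the map $\bA^{R_{r+1}(d)} \to \{\text{ordinary forms of degree } d' \text{ in } r+2 \text{ variables}\}$ induced by the substitution realizing the weights, noting it is a linear (in particular morphism of varieties) embedding of coefficient spaces, where $d'$ is $d$ times a suitable common factor; (3) invoke the ordinary-projective version: the set of coefficient vectors of degree-$d'$ forms that are \emph{not} absolutely irreducible is Zariski closed, cut out by finitely many polynomials; (4) pull these back along the linear substitution from (2) to obtain polynomials $\Psi_1, \dots, \Psi_m$ in $a_1, \dots, a_{R_{r+1}(d)}$ whose common vanishing describes the non-absolutely-irreducible weighted forms; (5) check the one subtlety in the equivalence — that the substitution neither creates nor destroys absolute irreducibility for forms in the image — and then set $\{\Phi_i\}$ to be $\{\Psi_j\}$ (so that $F$ is absolutely irreducible iff some $\Phi_i(a_1, \dots, a_{R_{r+1}(d)}) \ne 0$).

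The main obstacle is step (5): one must verify that for a form $F$ arising as a pullback of a weighted homogeneous form, absolute irreducibility of $F$ over $K$ is genuinely equivalent to absolute irreducibility of the corresponding weighted form on $\bP_K(\Bw)$. A factorization of the weighted form obviously pulls back to a factorization of $F$, giving one direction; the reverse direction — that a factorization of the pulled-back ordinary form must respect the grading and descend — requires care because the substitution is not injective on all forms. The cleanest route is to observe that since $w_0 = w_1 = 1$, one can work on the affine chart $x_0 \ne 0$, where $\bP_K(\Bw)$ and $\bP^{r+1}_K$ agree (the weighted structure only affects the behavior at infinity), so irreducibility of the affine dehomogenization is unaffected; absolute irreducibility of a projective hypersurface is equivalent to that of any of its nonempty affine pieces, and this lets one transfer the constructibility statement with no loss. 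Alternatively, and perhaps more in the spirit of the cited references, one simply repeats Salberger's elimination-theoretic argument from \cite{S1}, \cite{S2} verbatim, replacing the parameter space of ordinary forms of degree $d$ by the (linear) parameter space $\bA^{R_{r+1}(d)}$ of weighted homogeneous forms of weighted degree $d$ — the argument that the ``reducible locus'' is closed goes through unchanged because it only uses that the space of possible factors is itself parametrized by a finite-type $K$-scheme and the multiplication map is a morphism. I would present the proof along the latter lines, citing Lemma 1.8 of \cite{S2}, and remark that the only modification needed is the replacement of the coefficient space, exactly as indicated after the statement.
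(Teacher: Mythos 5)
Your second route — the one you commit to — is essentially the paper's proof. The paper works with the Hilbert schemes $\bH_k$ of degree-$k$ hypersurfaces in $\bP(\Bw)$, observes that multiplication induces a morphism $v_k : \bH_k \times \bH_{d-k} \to \bH_d$ of \emph{projective} schemes, and invokes the main theorem of elimination theory to conclude that each image $v_k(\bH_k \times \bH_{d-k})$ is Zariski closed, so the union over $1 \le k \le d-1$ is cut out by finitely many forms $\Phi_i$. This is exactly your plan of repeating Salberger's elimination argument with the weighted coefficient space in place of the ordinary one; the only point you should state fully precisely is that the parameter spaces of candidate factors are projective (not merely of finite type), since properness is what turns constructibility of the image into closedness. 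As for your first route, you correctly flagged the obstacle: the substitution realizing the weights, e.g.\ $x_i \mapsto y_i^{w_i}$ along $\bP^{r+1}_K \to \bP_K(\Bw)$, does not preserve absolute irreducibility when some $w_i > 1$ (already $F = x_2$ with $w_2 = 2$ pulls back to $y_2^2$), so the ordinary-projective constructible set cannot simply be pulled back; abandoning that route in favor of the elimination argument was the right call.
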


\begin{proof} First, we remark that weighted projective space can be realized as an abstract projective scheme by considering a grading corresponding to its weight vector. See Miles Reid's course notes \cite{<Re>}. Thus, let $\bH_k$ denote the Hilbert scheme of degree $k$ hypersurfaces in $\bP(\Bw)$. Since these hypersurfaces are defined by polynomials of degree $k$, there is a natural morphism between $\bH_k \times \bH_{d-k}$ and $\bH_d$. Let $v_k$ denote this morphism. Then, 
\[F(x_0, \cdots, x_{r+1}) = \sum_{j=1}^{R_{r+1}(d)} a_{j} \Bx^{\bbeta_j}\]
has a factor over $K$ of degree $k$ if and only if the corresponding $K$-point on $\bH_d$ lies in $v_k(\bH_k \times \bH_{d-k})$. Also, since $\bH_k \times \bH_{d-k}$ is a projective scheme, $v_k(\bH_k \times \bH_{d-k})$ must be a closed subset of $\bH_d$ by the main theorem in elimination theory in Chapter 3, Section 1 of \cite{<CLO>}. The union of $v_k(\bH_k \times \bH_{d-k})$ over $k = 1, \cdots, d-1$ must be a closed subset of $\bH_d$ defined by a finite set of forms 
\[\Phi_1(a_{1}, \cdots, a_{{R_{r+1}(d)}}), \cdots, \Phi_t(a_{1}, \cdots, a_{{R_{r+1}(d)}})\]
over $\bZ$ such that $F$ is absolutely irreducible over $K$ if and only if $\Phi_i(a_{1}, \cdots, a_{{R_{r+1}(d)}}) = 0$ for all $1 \leq i \leq t$ in $K$. This completes the proof. 
\end{proof}


The next lemma gives an upper bound for $\pi_X$ in the case when $X(\bQ; \BB)$ is not contained in another hypersurface of the same degree as $X$. 

\begin{lemma} \label{5Lem3} Let $X \subset \bP(\Bw)$ be a geometrically integral hypersurface of degree $d$ and $\BB = (B_0, \cdots, B_{r+1}) \in \bR_{\geq 1}^{r+2}$. Then one of the following statements hold: 
\begin{enumerate}
\item[(a)] \label{5L3A} $X(\bQ; \BB)$ lies in a hypersurface $Y \ne X$ of degree $d$,
\item[(b)] \label{5L3C} $\log \pi_X = O_{\Bw, d, r}\left(1 + \log V\right)$. 
\end{enumerate}
\end{lemma}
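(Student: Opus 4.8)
The plan is to run an effective Bertini-type argument combined with the counting bound from the determinant method. First I would fix a generic pencil of hyperplane-type sections: since $w_0 = w_1 = 1$, the coordinates $x_0, x_1$ behave like honest projective coordinates, and for a pair $(\lambda:\mu) \in \bP^1_\bQ$ one can consider the intersection of $X$ with the hyperplane $\mu x_0 - \lambda x_1 = 0$. Reducing modulo a prime $p$, standard spreading-out results (see \cite{Mum}, III.3, or the Bertini-type statements underlying Salberger's \cite{S1}) say that for all but $O_{d,r}(1)$ values of $(\lambda:\mu)$ in $\bF_p$, the section $X_p \cap \{\mu x_0 = \lambda x_1\}$ is geometrically integral whenever $X_p$ itself is. The contrapositive is what I want: if $X_p$ fails to be geometrically integral, then one of a bounded family of auxiliary conditions, detected by the universal forms $\Phi_1, \dots, \Phi_t$ of Lemma \ref{5Lem2} applied to $X$ and to its coordinate slices, must vanish mod $p$.

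The key step is then to produce, for each prime $p$ dividing $\pi_X$, an actual witness in the box. Here I would distinguish the two alternatives in the statement. Assume we are not in case (a), so $X(\bQ;\BB)$ is Zariski-dense enough that it is \emph{not} contained in any hypersurface $Y \ne X$ of degree $d$. Evaluating the universal forms $\Phi_i$ on the coefficient vector of $F$ gives a fixed nonzero integer $\Phi_i(F)$ for at least one $i$ (nonzero because $X$ itself is geometrically integral, by Lemma \ref{5Lem2}); this already handles the primes where the failure of geometric integrality of $X_p$ is "caused" by reduction of the coefficients, contributing $\log \prod_i |\Phi_i(F)| = O_{d,r}(\log \lVert F \rVert)$. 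The remaining primes are those where $X_p$ is geometrically reduced and irreducible as a scheme but acquires a reducibility only visible over $\bF_p$-points — i.e. the reducibility is "seen" by the rational points of $X_p$ in the box. For such $p$, density of $X(\bQ;\BB)$ (negation of (a)) forces $X(\bQ;\BB)$ to meet every degree-$d$ component structure, and tracking how the coordinates of these rational points satisfy the slicing forms $\Phi_i$ produces a fixed polynomial relation in the box variables whose value is a nonzero integer of size $O_{d,r}(V)$; each bad $p$ divides it, so $\log \pi_X = O_{d,r}(1 + \log V)$.

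The main obstacle I anticipate is making the dichotomy in the previous paragraph genuinely clean: one must be careful that "X_p not geometrically integral" is correctly translated, via Lemma \ref{5Lem2} and its slice versions, into the vanishing of an \emph{integer} evaluated on data of controlled height, without hidden dependence on $p$. The subtlety is that the universal forms live on the Hilbert scheme of $\bP(\Bw)$, so one has to check that restricting to the affine chart $x_0 \ne 0$ (as in the proof of Theorem \ref{T1}) and to the $\gcd(x_0,x_1)=1$ representatives does not break the integrality of the resulting resultant-type expressions; the remark after Lemma \ref{5Lem2}, via \cite{Mac} and \cite{Har1}, is exactly what guarantees this. Once that bookkeeping is in place, the height estimate $\log \prod |\Phi_i| = O_{d,r}(1 + \log V)$ is a routine application of the bound on the number and degree of the $\Phi_i$ (which depend only on $d$ and $r$) together with $\lVert F \rVert = O(V^{O_{d,r}(1)})$ on the relevant range, and the lemma follows.
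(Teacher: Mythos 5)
Your proposal starts from the right ingredient, the universal forms $\Phi_i$ of Lemma \ref{5Lem2}, but it has two genuine gaps, one conceptual and one substantive.

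The conceptual gap is the dichotomy between primes where non-integrality is ``caused by reduction of the coefficients'' and ``remaining primes.'' No such split exists. By Lemma \ref{5Lem2} applied over $K=\ol{\bF_p}$, $X_p$ fails to be geometrically integral if and only if \emph{every} $\Phi_j$ vanishes on the reduced coefficient vector $(a_1,\dots,a_{R_{r+1}(d)})$ of $F$, i.e.\ if and only if $p$ divides $\Phi_j(a_1,\dots,a_{R_{r+1}(d)})$ for every $j$. Since $X$ is geometrically integral there is some $i$ with $\Phi_i(a_1,\dots,a_{R_{r+1}(d)})\ne 0$, and then $\pi_X$ divides this single nonzero integer outright. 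There are no leftover primes, the pencil/Bertini slicing in your first paragraph plays no role, and the hand-waved construction in your ``remaining primes'' paragraph is never needed (nor is it a construction: you never produce the claimed ``fixed polynomial relation'').

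The substantive gap is that you assert, without proof, that $\lVert F\rVert = O\bigl(V^{O_{d,r}(1)}\bigr)$ ``on the relevant range.'' This is exactly the content of the lemma, and it is precisely where the failure of alternative (a) is used. Getting $\log\pi_X \le \log|\Phi_i(a_1,\dots,a_{R_{r+1}(d)})| = O_{d,r}(1+\log\lVert F\rVert)$ is routine; the work is in turning $\log\lVert F\rVert$ into $\log V$. The paper's proof does this with a linear-algebra step that your sketch does not supply: form the $\#X(\bQ;\BB)\times R_{r+1}(d)$ matrix $\M$ whose entries are the weighted-degree-$d$ monomials evaluated at the points of $X(\bQ;\BB)$. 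The coefficient vector $\mathbf{f}$ of $F$ lies in $\ker\M$, so $\rank\M \le R_{r+1}(d)-1$; expanding a suitable minor along a column produces a nonzero integral vector $\mathbf{g}\in\ker\M$ with entries $O\bigl(V^{dR_{r+1}(d)}\bigr)$, hence a degree-$d$ form $G$ of controlled height vanishing on $X(\bQ;\BB)$. If (a) fails, $G$ must be a scalar multiple of $F$, which forces $\lVert F\rVert \ll_{d,r} V^{dR_{r+1}(d)}$ and closes the argument. Without some such step, your proposal does not actually prove the bound in case (b); it only reduces (b) to an unjustified height bound on $F$.
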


\begin{proof} Let $F(x_0, \cdots, x_{r+1}) = \sum_{j=1}^{R_{r+1}(d)} a_{j} \Bx^{\bbeta_j}$ be a primitive integral form defining $X$ and 
\[\Phi_1(a_{1}, \cdots, a_{{R_{r+1}(d)}}), \cdots, \Phi_t(a_{1}, \cdots, a_{{R_{r+1}(d)}})\]
be the values of the universal forms in Lemma \ref{5Lem2} of the coefficients $a_{j}$ of $F$. Then $\Phi_i(a_{1}, \cdots, a_{{R_{r+1}(d)}}) \ne 0$ for some $i \in \{1, \cdots, t\}$, as $X$ is geometrically integral. By applying Lemma \ref{5Lem2} to $F_p$, which is $F$ reduced modulo $p$, and setting $K = \ol{\bF_p}$ for the prime factors $p$ of $\Phi_i(a_{1}, \cdots, a_{{R_{r+1}(d)}})$, we see that $\pi_X$ is a factor of $\Phi_i(a_{1}, \cdots, a_{{R_{r+1}(d)}})$. Note that the degree $D$ of $\Phi_i$ and the height $\lVert \Phi_i \rVert$ are bounded in terms of $d$ and $r$. Write $S = \# X(\bQ; \BB)$ and $s = R_{r+1}(d)$. Form the $S \times s$ matrix $\M$, where the rows correspond to the points $\textbf{x}_1, \cdots, \textbf{x}_S \in  X(\bQ; \BB)$ and the columns correspond to the monomials of weighted degree $d$. Then the vector $\textbf{f} \in \bZ^s$ corresponding to the coefficients of $F$ satisfies $\M \textbf{f} = \textbf{0}$, whence the rank of $\M$ is at most $s-1$. Let $s' \leq s-1$ denote the rank of $\M$. Then, for any $(s'+1) \times (s'+1)$ minor $\M'$ of $\M$, we have $\det \M' = 0$, while there exists some $s' \times s'$ minor $\M''$ of $\M$ such that $\det \M'' \ne 0$. Without loss of generality, assume that $\M''$ consisting of the first $s'$ columns and $s'$ rows of $\M$ is such that $\det \M'' \ne 0$. Then, by taking the $(s'+1) \times (s'+1)$ minor $\M'$ consisting of the first $s'+1$ columns and $s'+1$ rows of $\M$, we have that
\begin{equation} \label{mprime} \det \M' = 0.\end{equation}
Expanding $\det \M'$ along the right most column of $\M'$, we see that (\ref{mprime}) implies that there exists an integral vector $\textbf{g} \in \bZ^s$, whose entries are at most $V^{ds}$, such that $\M \textbf{g} = \textbf{0}$. Let $G$ be the corresponding weighted form. Note that $G$ is not the zero form and has degree $d$. Further, $G$ vanishes on $X(\bQ; \BB)$. Hence, if (a) does not hold, $G$ must be a constant multiple of $F$. Thus, it follows that 
\begin{equation} \label{4EE2} \lVert F \rVert \ll (R_{r+1}(d))! V^{dR_{r+1}(d)}\end{equation}
where the implied constant is absolute. Therefore, there exists $C_{7}(\Bw, d, r)$ such that 
\[\lvert \Phi_i(a_{1}, \cdots, a_{{R_{r+1}(d)}}) \rvert = O_{\Bw, d, r}(V^{C_7(\Bw, d, r)}).\] 
Since $\pi_X$ divides $\Phi_i(a_1, \cdots, a_{R_{r+1}(d)}),$ we have 
\[ \log \pi_X = O_{\Bw, d, r}( 1 + \log V)\]
if (a) does not hold, as desired.
\end{proof}

\section{Proof of Theorem \ref{MT3}: Preliminaries}
\label{S5} 

In the next two sections we complete the proof of Theorem \ref{MT3}. We have chosen to give arguments similar to those given by Salberger to prove his Lemma 1.4 in \cite{<S2>}, which is stated as Lemma \ref{6Lem1}  below. The argument in the proof of Lemma \ref{6Lem1} is essentially the same as the proof of Lemma 1.4 in \cite{<S2>}; Walsh also proved a similar result in \cite{<Wal>}. \\ \\
For a given point $P$ on $X_p$ let $m_P$ denote the multiplicity of $P$. Next, let us write $n_p = \sum_P m_P$, where the sum is over all points $P \in X_p$. 

\begin{lemma} \label{6Lem1} Let $X$ be a geometrically integral hypersurface in $\bP(\Bw)$ of degree $d$ defined by a primitive form $F$, and let $p$ be a prime for which $X_p$ is geometrically integral.  Suppose there exist $s$ primitive $(r+2)$-tuples of integers
$$\boldsymbol\xi_1, \cdots, \boldsymbol\xi_s$$
representing elements of $X(\bQ; \BB)$. Let $M_1, \cdots, M_s$ be monomials in $(x_0, \cdots, x_{r+1})$ with integer coefficients and the same weighted degree. Then, there is a positive number $\kappa(d,r)$ which depends on $d$ and $r$, such that the determinant of the $s \times s$ matrix formed by the entries $M_j(\boldsymbol\xi_l)$ is divisible by $p^{N}$ with
\[ N > (r!)^{1/r} \frac{r}{r+1} \frac{s^{1 + 1/r}}{n_p^{1/r}} - \kappa(d,r) s. \]
\end{lemma}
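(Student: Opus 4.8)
The plan is to reduce Lemma~\ref{6Lem1} to Theorem~\ref{T1} by a pigeonhole argument over the $\bF_p$-points of $X_p$. Since $X_p$ is geometrically integral, it is in particular reduced, so every point $P \in X_p$ has finite multiplicity $m_P$, and we have set $n_p = \sum_{P \in X_p} m_P$. Given the $s$ primitive tuples $\boldsymbol\xi_1, \dots, \boldsymbol\xi_s$ representing points of $X(\bQ;\BB)$, I first group them according to which point $P$ of $X_p$ they reduce to modulo $p$; write $s_P$ for the number of tuples reducing to $P$, so that $\sum_P s_P = s$.

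The next step is to estimate the $p$-adic valuation of $\det(M_j(\boldsymbol\xi_l))$ from below by applying Theorem~\ref{T1} block by block. More precisely, I would order the columns of the matrix so that the tuples reducing to a common point $P$ appear consecutively, and then exploit the multiplicativity of the valuation coming from the block structure: the argument of Proposition~\ref{4P2} applied separately to the points $P$ (the evaluation homomorphisms split according to the residue point, and the local ring data $g_{X,P}$ only depends on the point) shows that
\[
\ord_p \det(M_j(\boldsymbol\xi_l)) \;\geq\; \sum_{P \in X_p} A_P(s_P),
\]
where $A_P(s_P)$ is the quantity $A(s_P)$ attached to $P$ in Theorem~\ref{T1}. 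By Theorem~\ref{T1} each term satisfies $A_P(s_P) > \left(\tfrac{r!}{m_P}\right)^{1/r}\tfrac{r}{r+1}\, s_P^{1+1/r} - \kappa(d,r)\, s_P$. Summing, the error terms contribute $-\kappa(d,r)\sum_P s_P = -\kappa(d,r) s$, and it remains to bound the main term from below.

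The remaining step is the convexity estimate: I must show
\[
\sum_{P \in X_p} \left(\frac{r!}{m_P}\right)^{1/r} s_P^{1+1/r} \;\geq\; (r!)^{1/r}\, \frac{s^{1+1/r}}{n_p^{1/r}}.
\]
This follows from H\"older's inequality: writing $s = \sum_P s_P = \sum_P \big(m_P^{1/(r+1)}\big)\big(m_P^{-1/(r+1)} s_P\big)$ and applying H\"older with exponents $\tfrac{r+1}{1}$ and $\tfrac{r+1}{r}$ (so that the conjugate exponents match the power $1+1/r = \tfrac{r+1}{r}$), one gets
\[
s \;\leq\; \Big(\sum_P m_P\Big)^{1/(r+1)} \Big(\sum_P m_P^{-1/r} s_P^{(r+1)/r}\Big)^{r/(r+1)} = n_p^{1/(r+1)} \Big(\sum_P m_P^{-1/r} s_P^{1+1/r}\Big)^{r/(r+1)}.
\]
Rearranging gives $\sum_P m_P^{-1/r} s_P^{1+1/r} \geq s^{(r+1)/r} / n_p^{1/r} = s^{1+1/r}/n_p^{1/r}$, and multiplying by $(r!)^{1/r}$ yields the claim. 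Combining with the previous paragraph gives $N > (r!)^{1/r}\tfrac{r}{r+1}\,\tfrac{s^{1+1/r}}{n_p^{1/r}} - \kappa(d,r) s$, as desired.

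The main obstacle I anticipate is making the block-decomposition of the valuation rigorous: Theorem~\ref{T1} is stated for a single point, and I need to argue that the contributions of distinct residue points genuinely add. The cleanest route is not to quote Theorem~\ref{T1} as a black box but to re-run the proof of Proposition~\ref{4P2} (Lemma 2.4 of \cite{S1}): one chooses, for each point $P$, the rational-function representatives adapted to $P$ as in the proof of Theorem~\ref{T1}, and observes that the filtration-counting bound $A(s) = n_1(\R) + \cdots + n_s(\R)$ is replaced by $\sum_P \big(n_1(\R_P) + \cdots + n_{s_P}(\R_P)\big)$ when the evaluation points split among several residue points, since column operations producing high powers of $p$ can be performed within each block independently. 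Once that additivity is in hand, the asymptotic analysis of $A_P(s_P)$ is exactly as in the proof of Theorem~\ref{T1} and the H\"older step is routine.
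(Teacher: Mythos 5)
Your proposal follows essentially the same route as the paper: group the tuples by residue point $P$ modulo $p$, apply Theorem~\ref{T1} to the block of size $s_P$ attached to each $P$, sum the resulting lower bounds, and then use H\"older's inequality exactly as you do to pass from $\sum_P m_P^{-1/r} s_P^{1+1/r}$ to $s^{1+1/r}/n_p^{1/r}$. The H\"older step and the bookkeeping of the error terms match the paper verbatim.

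The one place where you correctly flag an obstacle --- making the ``block additivity'' of the $p$-adic valuation rigorous --- is also where the paper's argument is cleaner than what you sketch. Rather than re-opening Proposition~\ref{4P2} and trying to run the filtration argument simultaneously at several residue points (which is awkward, since Proposition~\ref{4P2} is stated for a single local ring $(R,\fm)$ and the maximal ideal depends on $P$), the paper quotes Theorem~\ref{T1} as a black box for each block and obtains additivity by a generalized Laplace expansion of $\Delta = \det(M_j(\boldsymbol\xi_l))$: expanding along the set of columns $I_P$ writes $\Delta = \sum \pm \det(\M_P)\det(\M_P')$, where each $\M_P$ is an $s_P\times s_P$ minor whose columns all reduce to $P$ and hence satisfies $p^{N_P}\mid\det(\M_P)$ by Theorem~\ref{T1}; iterating on the complementary minor $\M_P'$ over the remaining residue points gives, in every term of the full expansion, a product of minors $\prod_P \det(\M_P)$ divisible by $p^{\sum_P N_P}$. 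This is a purely determinantal device that bypasses any need to reconcile the filtration argument across distinct points, and it is the piece you would want to insert in place of the re-run of Proposition~\ref{4P2}. With that substitution your proof is complete and agrees with the paper's.
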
 
\begin{remark} The number $\kappa(d,r)$ is the same as in Theorem \ref{T1}.
\end{remark}
\begin{proof} Let $P$ be an $\bF_p$-point on $X_p$. Write $I_P \subset \{1, \cdots, s\}$ for the set of indices $l$ such that $\boldsymbol \xi_l + p \bZ^{r+2}$ represents $P$, and write $s_P = \# I_P$. Then, by Theorem \ref{T1}, there exists a non-negative integer 
\begin{equation} \label{6E1} N_P > \left(\frac{r!}{m_P}\right)^{1/r} \frac{r}{r+1} s_P^{1 + 1/r} - \kappa(d,r) s_P, \end{equation}
such that $p^{N_P} | \det(\M_P)$, where $\M_P$ is a $s_P \times s_P$ submatrix of $\M$ with second indices $l \in I_P$. By Laplace expansion, we can express $\Delta$ as follows:
\[ \Delta = \sum \operatorname{sgn}(\M_P, \M_P')\det(\M_P) \det(\M_P'),\]
where the sum is over all $s_P \times s_P$ minors $\M_P$ along the indices in $I_P$ and $\M_P'$ is the complementary minor of $\M_P$. We can iterate this process with each $\M_P'$, which consists of rows with indices in the set $\{1, \cdots, s\} \setminus I_P$. Each iteration yields a divisor of $\Delta$ which is independent of $p^{N_P}$. Hence, we get that $p^N | \Delta$, where 
\[N = \sum_P N_P > (r!)^{1/r} \frac{r}{r+1} \sum_P \frac{s_P^{1 + 1/r}}{m_P^{1/r}} - \kappa(d,r) s.\]
By H\"older's inequality, we get that
\[ s = \sum_P s_P \leq \left(\sum_P m_P \right)^{1/(r+1)} \left( \sum_P \frac{s_P^{1 + 1/r}}{ m_P^{1/r}} \right)^{r/(r+1)}.\]
Re-arranging, we obtain
\[ \sum_P \frac{s_P^{1 + 1/r}}{m_P^{1/r}} \geq \frac{s^{1 + 1/r}}{n_p^{1/r}}.\]
Thus, we have that
\[ N \geq (r!)^{1/r}\frac{r}{r+1}\frac{s^{1 + 1/r}}{n_p^{1/r}} - \kappa(d,r) s,\]
as desired. \end{proof}
We now draw on some results of Lang and Weil in \cite{<LW>} on the number of points of algebraic varieties over finite fields. Let us define $X_{p, \sing}$ to be the singular locus of $X_p$. Let $X_{p,j}$ be the zero locus of the partial derivative $\displaystyle \frac{\partial F}{\partial x_j}$ over $\bF_p$. Then $X_{p, \sing} \subset X_p \cap X_{p,j}$ for each $j = 0, \cdots, r+1$. In particular, $X_{p, \sing}$ has co-dimension at least one in $X_p$ since the partial derivatives of $F$ do not all vanish identically. By example 4 on page 130 of \cite{BR}, both $X_p$ and $X_{p,j}$ arise as quotients under the same action of hypersurfaces of degree $d$ and $d-1$ respectively over $\bP^{r+1}(\bF_p)$, thus the usual B\'{e}zout's theorem gives an upper bound for the number of components in $X_{p, \sing}$ as well as its degree. Therefore, the sum of the degrees of the irreducible components of $X_{p, \sing}$ is bounded in terms of $d$ and $r$. Hence, by Lemma 1 in \cite{<LW>}, we have $\# X_{p, \sing}(\bF_p) = O_{\Bw, d,r}(p^{r-1})$. Since the multiplicity of a point on $X_p$ is bounded in terms of $d$, it follows that

\[ \sum_P (m_P - 1)= O_{\Bw, d,r}(p^{r-1}).\]
Next we examine the sizes of equivalence classes for each point $\Bx$ in $\bP_{\bF_p}(\Bw)$. Suppose $\Bx = (x_0, \cdots, x_{r+1})$. If either $x_0$ or $x_1$ is non-zero in $\bF_p$, then we see that for $\lambda \not \equiv \gamma \pmod{p}$ the points
\begin{equation} \label{weight 1} (\lambda x_0, \lambda x_1, \lambda^{w_2} x_2, \cdots, \lambda^{w_{r+1}} x_{r+1})\end{equation}
and
\begin{equation} \label{weight 2} (\gamma x_0, \gamma x_1, \gamma^{w_2} x_2, \cdots, \gamma^{w_{r+1}} x_{r+1})\end{equation}
are distinct in $\bA_{\bF_p}^{r+2}$. Therefore, the equivalence class of $\Bx$ in $\bP_{\bF_p}(\Bw)$ has size $p-1$. In general, if (\ref{weight 1}) and (\ref{weight 2}) are in the same equivalence class in $\bP_{\bF_p}(\Bw)$, then the congruence
\[\lambda^{w_j} \equiv \gamma^{w_j} \pmod{p}\]
has to hold for each $j$ such that $x_j$ is non-zero, by the definition of weighted projective space. Thus, if the non-zero coordinates of $\Bx$ have co-prime weights then (\ref{weight 1}) and (\ref{weight 2}) are distinct whenever $\lambda \not \equiv \gamma \pmod{p}$. If the weights of the non-zero coordinates of $\Bx$ are not co-prime, then let $g$ be their $\gcd$. Without loss of generality, we may suppose that the non-zero coordinates are $x_2, \cdots, x_k$ for some $k \leq r+1$. For each non-zero $\lambda$ in $\bF_p$, choose a $g$-th root $\lambda^{1/g}$ in $\ol{\bF_p}$. Then we see that
\[(0,0, \lambda^{w_2/g} x_2, \cdots, \lambda^{w_k/g} x_k, \cdots)\]
and
\[(0,0, \gamma^{w_2/g} x_2, \cdots, \gamma^{w_k/g} x_k, \cdots)\]
are distinct in $\bA_{\bF_p}^{r+2}$ whenever $\lambda \not \equiv \gamma \pmod{p}$. Therefore, each equivalence class in $\bP_{\bF_p}(\Bw)$ contains exactly $p-1$ elements. Put $X'(\bF_p)$ to be the variety in $\bA_{\bF_p}^{r+2}$ defined by the same polynomial as $X$. It then follows that
\[\# X(\bF_p) = (p-1) \# X'(\bF_p),\]
so Theorem 1 of \cite{<LW>} implies that $\# X(\bF_p) = p^r + O(d^2 p^{r-1/2}) + O_{\Bw, d, r}\left(p^{r-1}\right)$, hence
\[ n_p = p^r + O(d^2 p^{r-1/2}) + O_{\Bw, d, r}\left(p^{r-1}\right).\]


More specifically, the implied constant is at most $d^2$ by the argument in \cite{<LW>}. Note that for all real numbers $\alpha \in \bR_{\geq 0}$, we have $\alpha - 1 = (\alpha^{1/r} - 1)(\alpha^{(r-1)/r} + \cdots + 1),$ and so $|\alpha^{1/r} - 1| \leq |\alpha - 1|$. Thus, $n_p^{1/r}  = p + O_{\Bw,d,r}(p^{1/2})$. We summarize this as a lemma:
\begin{lemma} \label{6Lem2} If $X_p$ is geometrically integral, then $n_p^{1/r} = p + O_{\Bw,d,r}(p^{1/2}).$
\end{lemma}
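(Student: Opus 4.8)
The plan is to pass from the weighted count $n_p = \sum_P m_P$ to the naive point count $\# X_p(\bF_p)$, to control the difference by the size of the singular locus of $X_p$, to invoke the Lang--Weil estimate, and finally to extract $r$-th roots by an elementary inequality.

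First I would write $n_p - \# X_p(\bF_p) = \sum_P (m_P - 1)$, so that only the singular points of $X_p$ contribute to the difference. Since $X_{p,\sing}$ is cut out by the partial derivatives of the form defining $X_p$, it has dimension at most $r-1$ and, by Bézout's theorem \cite{Fu}, the sum of the degrees of its irreducible components is bounded purely in terms of $d$ and $r$ (in fact it has degree at most $d-1$). Hence Lemma~1 of \cite{LW} gives $\# X_{p,\sing}(\bF_p) = O_{d,r}(p^{r-1})$, and since each $m_P \le d$ we obtain $\sum_P (m_P - 1) \le (d-1)\# X_{p,\sing}(\bF_p) = O_{d,r}(p^{r-1})$, with implied constant at most $d^2$.

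Next, since $X_p$ is geometrically integral of dimension $r$ and degree $d$, Theorem~1 of \cite{LW} yields $\# X_p(\bF_p) = p^r + O(d^2 p^{r-1/2})$. Combining this with the bound from the previous paragraph gives $n_p = p^r + O(d^2 p^{r-1/2})$, that is, $n_p/p^r = 1 + O(d^2 p^{-1/2})$.

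Finally I would take $r$-th roots using the elementary observation that for every real $\alpha \ge 0$ one has $\alpha - 1 = (\alpha^{1/r} - 1)(\alpha^{(r-1)/r} + \cdots + 1)$ with the second factor at least $1$, so $|\alpha^{1/r} - 1| \le |\alpha - 1|$. Applying this with $\alpha = n_p/p^r$ gives $|n_p^{1/r}/p - 1| \le |n_p/p^r - 1| = O(d p^{-1/2})$, whence $n_p^{1/r} = p + O(d p^{1/2})$, as asserted. There is no real obstacle here; the only point requiring a little care is that the degree bounds feeding into the Lang--Weil estimates (for $X_p$ itself and for $X_{p,\sing}$) are uniform in $p$, which is exactly what Bézout provides.
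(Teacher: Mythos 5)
Your proof follows the paper's argument step by step: bound $\sum_P (m_P-1)$ via Bézout and Lemma~1 of Lang--Weil, invoke Theorem~1 of Lang--Weil for $\# X_p(\bF_p)$, combine to get $n_p = p^r + O(d^2 p^{r-1/2})$, and extract an $r$-th root using the elementary inequality $|\alpha^{1/r}-1|\le|\alpha-1|$. The match is essentially exact, down to the same small jump at the end where the intermediate $O(d^2 p^{-1/2})$ is stated as $O(d p^{-1/2})$ in the conclusion, which appears in the paper as well.
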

We are now ready to complete the proof of Theorem \ref{MT3}. 

\section{Proof of Theorem \ref{MT3}: Completion}
\label{S6} 

Let $S = \# X(\bQ; \BB; \mathfrak{P})$, and let 
\[\boldsymbol \xi_1, \cdots, \boldsymbol \xi_S \in X(\bQ;\BB, \mathfrak{P})\]
be primitive integral $(r+2)$-tuples representing elements of $X(\bQ;\BB; \mathfrak{P})$. Let $I$ be the weighted homogeneous ideal generated by $F$ in $\bZ[x_0, \cdots, x_{r+1}]$. For a positive integer $s$, let $u$ be the positive integer such that $\mathcal{H}_I(u-1) < s \leq \mathcal{H}_I(u)$. By (\ref{2E2}), it follows that
\[s = \frac{du^r}{r! w} + O_{\Bw,r}\left(d^{r+2} + d^2 u^{r-1}\right) = \frac{du^r}{r! w} \left(1 + O_{\Bw,r}\left( d^{r+1} u^{-r} +  d u^{-1} \right) \right), \]
hence
\[\left(\frac{w \cdot r!}{d}\right)^{1/r} s^{1/r} = u \left(1 + O_{\Bw,r}( d^{r+1}u^{-r}  + d u^{-1})\right)^{1/r}. \]
Rearranging, we obtain
\begin{equation} \label{6E0} u = \left(\frac{w \cdot r!}{d}\right)^{\frac{1}{r}} s^{\frac{1}{r}} + O_{\Bw,r}\left(d \right).
\end{equation}
Observe that 
\[\mathcal{H}_I(u) - \mathcal{H}_I(u-1) = \frac{d(u^r - (u-1)^r)}{r! w} + O_{\Bw,d,r}(u^{r-1}) = O_{\Bw,d,r}(u^{r-1}),\]
hence by our choice of $u$ with respect to $s$, we have
\[\mathcal{H}_I(u) - s = O_{\Bw,d,r}\left(u^{r-1}\right).  \]
Therefore, 
\begin{equation} \label{hilbert} \end{equation} 
\begin{align*} u \mathcal{H}_I(u) & = \left(\left(\frac{w \cdot r!}{d}\right)^{1/r} s^{1/r} + O_{\Bw,r}(d) \right)\left(s + O_{\Bw,d,r}(u^{r-1}) \right) \\
& =  \left(\frac{w \cdot r!}{d}\right)^{1/r} s^{1 + \frac{1}{r}} + O_{\Bw,d,r}\left(s\right). \end{align*}
Let $M_1, M_2, \cdots, M_s$ be distinct monomials of weighted degree $u$ which are not leading monomials of any element in $I = \langle F \rangle$. These monomials are linearly independent over $\bQ$, and any $\bQ$-linear combination of them is not a multiple of $F$. \\ \\
Set
\[\mathfrak{M} = (M_j(\boldsymbol \xi_l))_{\substack{1 \leq j \leq s \\ 1 \leq l \leq S}}.\]
If $S < s$, then $\mathfrak{M}$ has rank at most $s-1$. Hence, $\mathfrak{M}$ has a non-trivial kernel, so there exists a vector $\mathbf{g} \in \bZ^s$ such that 
\[\mathfrak{M} \mathbf{g} = \mathbf{0}. \]
Such a $\mathbf{g}$ gives rise to a form $G$ such that for all $\Bx \in X(\bQ; \BB; \mathfrak{P})$, we have $G(\Bx) = 0$. Thus $G$ defines a hypersurface $Y$ such that $X(\bQ; \BB; \mathfrak{P}) \subset Y$ and $\deg Y = u$. \\ \\
We now assume that $S \geq s$. If we can prove that for any $s \times s$ minor $\M$ of $\mathfrak{M}$ has determinant equal to $0$, then $\mathfrak{M}$ has rank at most $s - 1$. This is the goal we devote the rest of this section to. We choose, as we may, $\M$ to be the $s \times s$ minor of $\mathfrak{M}$ composed of the first $s$ rows, and consider
\[\Delta = \det \M.\]
We estimate $\Delta$ from above as follows:
\[ |\Delta| \leq s! B_0^{\sigma_{I,0}(u)} \cdots B_{r+1}^{\sigma_{I,r+1}(u)},\]
where the $\sigma_{I,j}(u)$'s are as in equation (\ref{2E1}). By Proposition \ref{2P2}, this is equivalent to
$$\displaystyle  |\Delta| \leq s! \left(B_0^{a_{I,0}} \cdots B_{r+1}^{a_{I,r+1}}\right)^{u \mathcal{H}_I(u)} V^{O_{\Bw,d,r}(u^{r})}.$$
Taking logarithms and recalling (\ref{3E2}), this bound becomes
\begin{equation} \label{6E2} \displaystyle \log |\Delta| \leq u \mathcal{H}_I(u) \log (B_0^{a_{I,0}} \cdots B_{r+1}^{a_{I,r+1}}) + s \log s + O_{\Bw,d,r}\left(u^{r} \log V\right). \end{equation}
We want to express everything in terms of $s$. By (\ref{3E2}), (\ref{6E0}), and (\ref{hilbert}), equation (\ref{6E2}) becomes, for some positive $C_{9}(\Bw, d,r)$,
\begin{equation} \label{6E3} \displaystyle \log |\Delta| \leq (r!)^{1/r}\frac{r}{r+1} s^{1 + 1/r} \log W + s \log s + C_{9}(\Bw, d,r) s \log V. \end{equation}
We proceed to prove the first part of the theorem. Let $\ep > 0$ be as in the theorem, and recall the hypothesis
\[WV^\ep \leq \Q \leq  WV^{2 \ep} .\]
Choose $s$ to be
\begin{equation} \label{6E11} s = \left \lceil C_{10}(\Bw, d, r)\left(\frac{r+1}{\ep r(r!)^{1/r}} \left( 1 + 2 \ep+ \left(\frac{w}{d}\right)^{1/r} \right) \right)^r  \right \rceil + 1, \end{equation}
where $C_{10}(\Bw,d, r)$ is a positive number which will be chosen later. For each prime $p$, write $\lvert \cdot \rvert_p$ for the $p$-adic valuation on $\bQ$, normalized so that $\lvert p \rvert_p = p^{-1}$. For convenience, let us write
\[\P = \{p_1, \cdots, p_t\}\]
and
\[ \mathfrak{P} = (P_1, \cdots, P_t),\]
where $P_i$ is a non-singular point on $X_{p_i}$ for each $i$, $1 \leq i \leq t$. Theorem \ref{T1} gives that
\[-\log \lvert \Delta \rvert_{p_i} > \frac{(r!)^{1/r} r}{r+1} s^{1 + 1/r} \log p_i - \kappa'(r) s \log p_i.\]
Observe that
\begin{equation} \label{edash} -\sum_{i=1}^t \log \lvert \Delta \rvert_{p_i} > \frac{(r!)^{1/r} r}{r+1} s^{1 + 1/r} \log \Q - \kappa'(r)  s \log \Q. \end{equation}
By (\ref{6E3}) and (\ref{edash}), there exists a positive number $C_{11}(\Bw,d, r)$ such that
\begin{equation} \label{6E10} \log \lvert \Delta \rvert + \sum_{i=1}^t \log \lvert \Delta \rvert_{p_i} \leq \frac{r (r!)^{1/r}}{r+1} s^{1 + 1/r} \log \frac{ W}{\Q} + C_{11}(\Bw, d, r) s \log V\Q. \end{equation}
We choose $C_{10}(\Bw,d, r)$ to be $C_{11}(\Bw,d, r)^r$. Note that by (\ref{2E4}), we have
\[a_{I,j} \frac{r+1}{r} \left(\frac{w}{d}\right)^{1/r} \leq \left(\frac{w}{d}\right)^{1/r},\]
for $0 \leq j \leq r+1$, whence
\[\log W \leq \left(\frac{w}{d}\right)^{1/r} \log V.\]
By the assumption that $\Q \geq WV^\ep$, the right hand side of (\ref{6E10}) then satisfies
\[\frac{r (r!)^{1/r}}{r+1} s^{1 + 1/r} \log \frac{ W}{\Q} + C_{11}(\Bw,d, r) s \log V\Q  \leq - \ep \frac{r(r!)^{1/r}}{r+1} s^{1 + \frac{1}{r}} \log V + C_{11}(\Bw, d,r)  s \log V\left(1 + 2 \ep + \left(\frac{w}{d}\right)^{1/r}\right), \]
and upon dividing the right hand side by $s \log V$ we have
\begin{equation} \label{forced to be zero}- \ep \frac{r(r!)^{1/r}}{r+1} s^{\frac{1}{r}} + C_{11}(\Bw, d,r)  \left(1 + 2\ep + \left(\frac{w}{d}\right)^{1/r}\right). \end{equation}
If (\ref{6E11}) is satisfied, then 
\[ \ep \frac{r(r!)^{1/r}}{r+1} s^{\frac{1}{r}} > C_{11}(\Bw, d,r)  \left(1 + 2 \ep + \left(\frac{w}{d}\right)^{1/r}\right), \]
whence (\ref{forced to be zero}) is negative. Therefore, we obtain
\begin{equation} \label{6E10A} \log \lvert \Delta \rvert + \sum_{i=1}^t \log \lvert \Delta \rvert_{p_i} < 0.
\end{equation}
Hence, for $WV^\ep \leq \Q \leq WV^{2 \ep}$ and $s$ satisfying (\ref{6E11}), we have
\[ \Delta = 0. \]
This implies that $X(\bQ; \BB, \mathfrak{P})$ is contained in a hypersurface $Y(P_1, \cdots, P_t)$ satisfying
\[\deg Y = O_{d, \Bw,r} \left(s^{1/r}\right) = O_{d,\Bw, r, \ep}(1),\]
defined by a primitive form $G$. To estimate the height of $G$, we argue as in Lemma \ref{5Lem3}. Let $s' \leq s - 1$ denote the rank of $(M_j(\boldsymbol \xi_l))$. Then, from evaluating all $(s'+1) \times (s'+1)$ sub-determinants by expanding along a row, we see that the height of $G$ is at most 
\[\max |\det \M|\]
where the maximum is taken over all $s' \times s'$ minors of $(M_j(\boldsymbol \xi_l))$. This can be bounded just as in (\ref{6E3}), so by (\ref{6E11}) and (\ref{4EE2}), we obtain
\[ \log \lVert G \rVert = O_{d,\Bw,r,\ep}\left(\log V \Q \right). \]
Further, since the monomials which appear in $G$ with a non-zero coefficient are not leading monomials of $I$, $F$ cannot divide $G$; and thus, $X$ cannot be contained in $Y(\mathfrak{P})$. This completes the proof of the first part of Theorem \ref{MT3}. \\ \\
For the second part, suppose that $X$ is geometrically integral. Set
\begin{equation} \label{6E9} s = \left \lceil \max\left \{ \Q^{-r}W^r(1+ \log V\Q)^{C_{12}(\Bw, d, r) }, (\log \Q V)^r \right \} \right \rceil + 1, \end{equation}
where $C_{12}(\Bw, d, r)$ is a number which depends on $d$, $\Bw,$ and $r$, and will be specified later; see (\ref{kappa}). By (\ref{6E9}), it follows that
\begin{equation} \label{degu} u = O_{\Bw, d, r} \left( (\Q^{-1} W + 1) \log V\Q \right) . \end{equation}
We now consider the two cases given by Lemma \ref{5Lem3}. If case (a) holds, we can produce a hypersurface $Y$ of degree $d$, distinct from $X$, which contains $X(\bQ; \BB, \mathfrak{P})$. This is sufficient for the theorem. Thus, it remains to treat the case when $\pi_X = O_{\Bw, d, r}(1 + \log V)$. In this case, we have will have two separate divisors of $\Delta$ to estimate; one coming from the prime factors of $\Q$, and one coming from primes which do not divide $\Q \pi_X$. \\ \\
We now estimate the contribution coming from primes which are co-prime to $\Q \pi_X$. For each prime $p$ such that $X_p$ is geometrically integral, by Lemma \ref{6Lem1} we have
\[ - \log \lvert \Delta \rvert_p \geq \frac{(r!)^{1/r} r}{r+1} s^{1 + 1/r} \frac{\log p}{n_p^{1/r}} - \kappa(d,r) s \log p. \]
We write the sum over the primes $p$ for which $p \nmid \Q \pi_X, p \leq s^{1/r}$ as $\displaystyle \sideset{}{^\ast}\sum_{p \leq s^{1/r}}$. By Lemma \ref{5Lem3}, we have
\begin{equation} \label{6E4} \sum_{p | \Q\pi_X} \frac{\log p}{p} = \log(1 + \log V\Q) + O_{\Bw, d, r}(1).\end{equation}
Then, by applying Lemma \ref{6Lem2} and the prime number theorem, we have, for some positive numbers $C_{13}(\Bw, d, r), C_{14}(\Bw, d, r)$,
\begin{align*} -\sideset{}{^\ast} \sum_{p \leq s^{1/r}} \log \lvert \Delta \rvert_p &
 \geq \frac{(r!)^{1/r} r}{r+1} s^{1+1/r} \sideset{}{^\ast} \sum_{p \leq s^{1/r}} \frac{\log p }{n_p^{1/r}} - \kappa(d,r) s \sum_{p \leq s^{1/r}} \log p \\
& \geq \frac{(r!)^{1/r} r}{r+1} s^{1+1/r} \sideset{}{^\ast} \sum_{p \leq s^{1/r}} \frac{\log p}{p} - C_{13} (\Bw, d, r) s^{1 + 1/r} \\ 
& \geq \frac{(r!)^{1/r} }{r+1} s^{1+1/r}  \left(\log s - r \sum_{p | \Q \pi_X} \frac{\log p}{p}\right) - C_{14} (\Bw, d, r) s^{1 + 1/r} \\
& \geq \frac{(r!)^{1/r} }{r+1} s^{1+1/r} \left( \log s - O_{\Bw, d, r}(\log(1 + \log V\Q)) \right) - C_{14}(\Bw, d, r)  s^{1 + 1/r}. 
\end{align*}
We invoke the bound from equation (\ref{6E3}) and obtain the inequality
\begin{equation} \label{6E6} \log \lvert \Delta \rvert + \sum_{i=1}^t \log \lvert \Delta \rvert_{p_i} + \sideset{}{^\ast} \sum_{p \leq s^{1/r}} \log \lvert \Delta \rvert_p \leq \frac{(r!)^{1/r}}{r+1} s^{1 + 1/r} \log\left[ \frac{ W^{r}}{\Q^r s} \right] + C_{15}(\Bw, d, r) \left(s^{1+1/r} (\log (1 + \log V\Q)) + s\log V\Q \right),\end{equation}
where $C_{15}(\Bw, d, r)$ is a positive number which depends on $d$ and $r$. Note that 
\[\log V\Q  \ll_{\Bw, d, r} s^{1/r}\]
by (\ref{6E9}). We may thus choose a positive number $C_{12}(\Bw, d, r)$ such that 
\begin{equation} \label{kappa} C_{15}(\Bw, d, r) \left(s^{1 + 1/r} + s\log V\Q \right) < \frac{(r!)^{1/r}}{r+1} s^{1 + 1/r}  \C_{12}(\Bw, d, r) \log  (1 + \log V \Q).\end{equation}
Then, equation (\ref{6E6}) becomes
\begin{equation} \label{6E7} \log \lvert \Delta \rvert + \sum_{i=1}^t \log \lvert \Delta \rvert_{p_i} + \sideset{}{^\ast} \sum_{p \leq s^{1/r}} \log \lvert \Delta \rvert_p \leq \frac{(r!)^{1/r}}{r+1} s^{1 + 1/r} \log\left[ \frac{(1 + \log V\Q)^{C_{12}(\Bw,d,r)}W^{r} }{\Q^r s} \right].
\end{equation}
Hence,
\begin{equation} \label{6E8} \Delta = 0\end{equation}
whenever
\[ s > \max \left \{\Q^{-r} W^{r} (1 + \log V\Q)^{C_{12}(\Bw, d, r)}, (\log V\Q)^r \right \}.\]
By our choice of $s$ and $C_{12}(\Bw, d, r)$, this is satisfied. \\ \\
When $s$ is of this size, any set of $s$ $(r+2)$-tuples $\boldsymbol \xi_1, \cdots, \boldsymbol \xi_s \in X(\bQ; \BB; P_1, \cdots, P_t)$ satisfies
$$\displaystyle \Delta = 0,$$
so $(M_j(\boldsymbol \xi_l))$ has rank less than $s$. This implies that $(M_j(\boldsymbol \xi_l))$ has a non-trivial kernel, whence we may find an auxiliary form $G$ of degree $u$ defining a hypersurface $Y(P_1, \cdots, P_t)$ such that 
\[X(\bQ; \BB, P_1, \cdots, P_{t}) \subset Y(P_1, \cdots, P_t).\] 
Further, since the monomials which appear in $G$ with non-zero coefficient are not leading monomials of $I$, it follows that $F$ cannot divide $G$. Since $X$ is geometrically integral, the hypersurface $Y(P_1, \cdots, P_t)$ satisfies the conditions of the theorem. This completes the proof of Theorem \ref{MT3}.


\section{Preliminaries for dealing with binary forms}
\label{S8}

In this section and the next, we use our results from previous sections to prove Theorem \ref{MT1}. Suppose we have a binary form $F(x,y)$ of degree $D$ with integer coefficients. Notice that if $k \geq d/2$, Theorem \ref{MT1} follows from the work of Greaves \cite{<Gre>}. Hence, we may suppose that $k \geq 2$ is an integer which satisfies
\begin{equation} \label{7E2}\frac{7}{18} < \frac{k}{d} < \frac{1}{2}.\end{equation} 
We turn our attention to the following central object
\begin{equation} N_{F,k}(B) = \# \{(x,y) \in \bZ^2 : 1 \leq x, y \leq B, F(x,y) \text{ is } k\text{-free}\}.\end{equation}
We assume that for all primes $p$, there exists a pair of positive integers $(a,b)$, such that $p^k$ does not divide $F(a,b)$. Our strategy will be to show that subject to (\ref{7E2}), we have $N_{F,k}(B) = C_{F,k}B^2 + O(B^2(\log B)^{-\delta})$, where $C_{F,k}$ is as in (\ref{E1}). This would show that $F$ takes on $k$-free values infinitely often. We also note the following observation, which follows easily from the definition of the Mobius function:

$$\displaystyle \sum_{b^k | F(x, y)} \mu(b) = 
\begin{cases}
 1, & \text{if }F(x, y)\text{ is } k\text{-free,} \\
 0, & \text{otherwise.}
\end{cases}$$
For any $\xi > 0$, we write 
$$\displaystyle M_1(B) = \#\{(x,y) \in \bZ^2 : 1 \leq x, y \leq B : p^k | F(x,y) \Rightarrow p > \xi\},$$
$$M_2(B) = \#\left\{(x,y) \in \bZ^2, 1 \leq x, y \leq B : p^k | F(x,y) \Rightarrow p > \xi, \exists p \in \left(\xi, \frac{B^{2}}{\log B}\right] \text{ s.t. } p^k | F(x,y)\right\},$$
and
\[ M_3(B)  = \#\left\{(x,y) \in \bZ^2, 1 \leq x, y \leq B : \exists p > \frac{B^2}{\log B}, v \in \bZ \text{ s.t. } F(x,y) = vp^k \right\}.\]
Note that by their definitions we have
\[ M_1(B) - M_2(B) - M_3(B) \leq N_{F,k}(B) \leq M_1(B),\]
so it suffices to show that $M_1(B)$ dominates the other two terms. Write 
$$\displaystyle N(b, B) = \# \{(x,y) \in \bZ^2 : |x|, |y| \leq B, b^k | F(x,y)\}.$$ 
We have that
\begin{align*} \displaystyle M_1(B) & = \sum_{\substack{b \in \bN \\ p | b \Rightarrow p \leq \xi}} \mu(b) N(b,B) \\
 & =  \sum_{\substack{b \in \bN \\ p | b \Rightarrow p \leq \xi}} \mu(b)\rho_F(b^k)\left \{\frac{B^2}{b^{2k}} + O\left(\frac{B}{b^k} + 1 \right) \right\}. \end{align*}
When $b$ is squarefree, we have the bound
\[ b \leq \prod_{p \leq \xi} p = \exp \left(\sum_{p \leq \xi} \log p \right) \leq e^{2 \xi},\]
by Theorem 4 of \cite{<RS>}. It is clear that the function $\rho_F$ is multiplicative. Since $F$ is a binary form, we see that if $F(x,0) \equiv 0 \pmod{p}$, then $a_D x^D \equiv 0 \pmod{p}$, where $a_D$ is the coefficient of $x^D$ in $F$. There can only be finitely many primes $p$ such that $p | a_D$, and for all other primes we must have $x \equiv 0 \pmod{p}$. In other words, for all but finitely many primes, $0$ is the only solution to $F(x,0) \equiv 0 \pmod{p}$. A similar argument applies for solutions of the form $(0,y)$. Now, suppose that $(x,y)$ is a solution such that $x,y \not\equiv 0 \pmod{p}$. Then,
 \[F(x,y) \equiv y^D F(x/y, 1) \equiv 0 \pmod{p},\]
 and since $y \not \equiv 0 \pmod{p}$, it follows that this solution arises from a zero of $F(\gamma,1)$ over the field of $p$ elements. However, there can be at most $D$ roots to this polynomial, which implies that $\rho_F(p) \ll p$. For $\rho_F(p^k)$, we refer the reader to Lemma 1 of \cite{<Fil>} for the proof of the bound $\rho_F(p^k) \ll p^{2k-2}$. Hence, for any $\ep > 0$ and $b$ square-free, we have $\rho_F(b^k) \ll_\ep b^{2k - 2 + \ep}$. For $k \geq 2$, we have 
$$\displaystyle M_1(B) = B^2 \prod_{p \leq \xi} \left(1 - \frac{\rho_F(p^k)}{p^{2k}}\right) + O\left(\sum_{b \leq e^{2\xi}} (Bb^{k - 2 + \ep} + b^{2k - 2 + \ep})\right).$$
Note that
$$\displaystyle \prod_{p \leq \xi} \left(1 - \frac{\rho_F(p^k)}{p^{2k}}\right)$$
is a partial product of an absolutely convergent product, $C_{F,k}$, and is therefore positive. \\ \\
By setting $\displaystyle \xi = \frac{1}{2k} \log B$, we see that
$$\displaystyle M_1(B) = B^2 \prod_{p \leq \xi} \left(1 - \frac{\rho_F(p^k)}{p^{2k}}\right) + O\left(B^{2 - \frac{1}{k} + \ep}\right).$$
We now consider $M_2(B)$. We refer the reader to Lemma 2 in Greaves \cite{<Gre>}, where he obtained the bound
$$\displaystyle M_2(B) = O\left(B^2(\log B)^{-1}\right),$$
for $k \geq d/2$ and 
\[M_2(B) = O\left(B^2(\log B)^{-1/2}\right)\]
for $k = 2, d = 6$. Helfgott, in \cite{<Hel>}, obtained the error term
\[M_2(B) = O\left(B^2(\log B)^{-\delta}\right)\]
for $\delta = 0.7034\cdots$. We note that the argument in \cite{<Gre>} deals with essentially one prime at a time, so it simultaneously deals with all numbers $z$ divisible by some prime $p$ in the interval $(\xi, B^2(\log B)^{-1}]$. An important feature of Greaves' estimate which is not present in the work of any subsequent author, except Hooley \cite{<Hoo2>} \cite{<Hoo3>}, is that his estimate for $M_2(B)$ is \emph{independent} of any relationship between $k$ and $d$. All further estimates obtained by other authors require a relationship between $k$ and $d$ of the form $k \geq \upsilon_1 d + \upsilon_2$, where $0 < \upsilon_1 \leq 1/2$ and $\upsilon_2 \in \bR$. \\ \\
To complete the proof of the theorem, it will be enough to show that 
$$\displaystyle M_3(B) \ll B^{2 - \eta}$$
for some $\eta > 0$, which will be the focus of the next section.

\section{Application of the determinant method and the proof of Theorem \ref{MT1}}
\label{S9}

We estimate the remaining term $M_3(B)$ via the generalization of Salberger's global determinant method (see \cite{<S2>}) in the weighted projective case established in earlier sections. The argument given here is specialized for the binary form problem. We denote by 
\[ S_3^{(f)}(B) = \left\{(x,y) \in \bZ^2 : 1 \leq x, y \leq B, \exists p > \frac{B^2}{\log B}, v \in \bZ \text{ s.t. } f(x,y) = vp^k \right\}\]
for some irreducible factor $f$ of $F$. Further, write $M_3^{(f)}(B) = \# S_3^{(f)}(B)$. Since $F$ has non-zero discriminant, it follows that
\[ M_3(B) \leq \sum_{f | F, f \text{ irreducible}} M_3^{(f)}(B).\]
Let us fix an irreducible factor $f(x,y) \in \bZ[x,y]$ of $F$, such that $f$ has maximal degree, and write $d = \deg f$. Note that if $p^k > d \lVert f \rVert B^d$, then $p^k > |f(x,y)|$ for all $(x,y) \in [1,B]^2 \cap \bZ^2$. Therefore, $p^k$ cannot divide $f(x,y)$ unless $(x,y) = (0,0)$. Hence, we may assume that $p \leq \left(d \lVert f \rVert\right)^{1/k} B^{d/k}.$ Thus, the relevant range of primes left to consider are
\[\frac{B^2}{\log B} < p \leq d \lVert f \rVert  B^{\frac{d}{k}}. \]
Following Browning in \cite{<B2>} , we partition the above range into dyadic intervals of the form $(H/2, H]$ where 
\[B^2/\log B  \ll H \ll d \lVert f \rVert B^{\frac{d}{k}}.\]
Now write 
\begin{equation} \label{8E0}R(f; H, B) = \# \{(x,y,v,z) \in \bZ^4 : f(x,y) = vz^k, (x,y) \in S_3^{(f)}(B), \end{equation}
\[\gcd(x,y) =1, H/2 < z \leq H, v \ll B^d/H^k, z \text{ prime}, v \ne 0\}.\]
Write $H = B^\beta$, so $B^d/H^k = B^{d-k\beta}$. Summing over these dyadic intervals, we then obtain:
\begin{equation} \label{Dyadic} M_3^{(f)}(B) \ll \log B \sup_{2 - \frac{\log \log B}{\log B} < \beta \leq \frac{d}{k} + \frac{\log(d\lVert f \rVert)}{\log B}} R(f; B^\beta, B). \end{equation}
Therefore, it suffices to examine the maximum size of a single $R(f; B^\beta, B)$, as in \cite{<B2>}. Diverging from Browning's argument, we directly estimate $R(f; B^\beta, B)$ instead of passing to the single variable case. We are then left to count the number of integral solutions to
\begin{equation} \label{8E1} \F(x,y,v,z) = f(x,y) - vz^k = 0 \end{equation}
where $(x,y,v,z)$ is subject to the constraints in (\ref{8E0}) with $H = B^\beta$. \\ \\
Let us denote by $X$ the surface given by (\ref{8E1}). We consider possible singular points in $X(\bQ; \BB)$. First, note that our ambient space $\bP(1,1,2,d-2k)$ has singularities at $(0,0,1,0)$ and $(0,0,0,1)$. However, these points do not line in $X(\bQ;\BB)$ since we are only counting points whose first two coordinates are co-prime. Next recall that a point $\Bz = (x_0, y_0, v_0, z_0)$ on $X$ is singular if 
\[\frac{\partial \F}{\partial x} (\Bz) = \frac{\partial \F}{\partial y} (\Bz) = \frac{\partial \F}{\partial v} (\Bz) = \frac{\partial \F}{\partial z} (\Bz) = 0.\]
Suppose that $\displaystyle \frac{\partial \F}{\partial x} (\Bz) = \frac{\partial \F}{\partial y} (\Bz) = 0$, with $\Bz \ne \mathbf{0}$. Then, by Euler's formula, we have
\begin{align*} 0 & = \left(x_0 \frac{\partial \F}{\partial x} (\Bz) + y_0 \frac{\partial \F}{\partial y} (\Bz) \right) \\ 
& = \left(x_0 \frac{\partial f}{\partial x} (\Bz) + y_0 \frac{\partial f}{\partial y} (\Bz) \right) \\ 
& = d f(x_0, y_0).
\end{align*}
Since $f$ is irreducible over $\bQ$, it has no integral zeroes except $(0,0)$. Therefore, we see that all points in $X(\bQ; \BB)$ are non-singular, since it only counts those points where the first two coordinates are co-prime. \\ \\
Let 
\begin{equation} \label{XBeta} X^\beta(\bQ; \BB) = \{\Bx \in X : \Bx \text{ satisfies (\ref{8E0}) }  \}. \end{equation}In view of Proposition \ref{2P2}, we need to compute the constants $a_x, a_y, a_v, a_z$ with respect to the ideal $I = \langle  \F \rangle$. By (\ref{8E0}), we have
\[B_x = B_y = B, B_v = B^{d - k\beta}, B_z = B^\beta.\]
Note that with respect to reverse lexicographic ordering, the monomial $vz^k$ is maximal in $\F$. Hence, it follows that
\[a_x = a_y = \frac{d - 0}{3d},\]
\[a_v = \frac{d - (d-2k)}{3d(d-2k)} = \frac{2k}{3d(d-2k)},\]
and
\[a_z = \frac{d - 2(k)}{3d(2)} = \frac{d-2k}{6d}.\]
Thus, we have
\[B_x^{a_x} B_y^{a_y} B_v^{a_v} B_z^{a_z} = B^{\frac{1}{3} \left(2 + \frac{2k(d-k\beta)}{d(d-2k)} + \frac{\beta(d-2k)}{2d} \right)}.\]
Next, note that
\[1 + \frac{2k(d-k\beta)}{d(d-2k)} + \frac{\beta(d-2k)}{2d} = \frac{d-k\beta}{d-2k} + \frac{\beta}{2},\]
whence it follows
\[\left(B_x^{a_x} B_y^{a_y} B_v^{a_v} B_z^{a_z}\right)^{\frac{3}{2} \left(\frac{2(d-2k)}{d}\right)^{1/2}} = \left(B^{1 + \frac{d-k\beta}{d-2k} + \frac{\beta}{2}}\right)^{ \frac{1}{2} \left(  \frac{2(d-2k)}{d}\right)^{1/2}}.\]
Let us write 
\begin{equation} \label{exp} \psi = \frac{1}{2} \left(1 + \frac{d-k\beta}{d-2k} + \frac{\beta}{2} \right) \left( \frac{2(d-2k)}{d}\right)^{1/2}.\end{equation}
Observe that $B^\psi$ corresponds to $W$ in Theorem \ref{MT3}. \\ \\
It is clear that $X$ is geometrically integral. Hence, by Theorem \ref{MT3}, there exists a surface $Y(\emptyset) \subset \bP(1,1,d-2k,2)$ not containing $X$ such that 
\begin{equation} \label{yempty} \deg Y(\emptyset) = O_{d, \ep}\left(B^{\psi+\ep}\right)\end{equation}
and 
\[X^\beta(\bQ; \BB) \subset X(\bQ;\BB) \subset Y(\emptyset).\]
We will now show that, in fact, $X_p$ is geometrically integral except for those primes $p$ which divide the coefficients of $x^d$ and $y^d$ in $f(x,y)$. Suppose that 
\[\F(x,y,v,z) = f(x,y) - vz^k\]
admits a factorization into two weighted forms $\F_1, \F_2$ over the algebraic closure of $\ol{\bF_p}$, where $p$ does not divide the coefficient of $x^d$ nor $y^d$ in $f(x,y)$. By Lemma 8 in Chapter 2 of \cite{<CLO>}, it follows that the leading monomial of $\F$ is equal to the product of the leading monomials of $\F_1, \F_2$. Thus, under our ordering $>$, where $vz^k$ is the leading monomial of $\F$, this implies that $\F_1, \F_2$ must take the forms
\[\F_1(x,y,v,z) = a_0 vz^l + \G_1(x,y,z),\]
\[\F_2(x,y,v,z) = b_0 z^{k-l} + \G_2(x,y,z)\]
for some non-negative integer $l \leq k$, since $\F_1, \F_2$ are both weighted homogeneous with respect to $(1,1,d-2k, 2)$. By considering different orderings which order $x$ and $y$ respectively as the highest and applying Lemma 8 in Chapter 2 of \cite{<CLO>}, we see that 
\[\F_1(x,y,v,z) =a_0 vz^l + a_1 x^{d-2k+2l} + a_2 y^{d-2k+2l} + \G_1'(x,y,z),\]
\[\F_2(x,y,v,z) = b_0 z^{k-l} + b_1 x^{2k-2l} + b_2 y^{2k-2l} + \G_2'(x,y,z)\]
where $a_1, a_2, b_1, b_2$ are non-zero in $\ol{\bF_p}$. The terms 
\[a_0 b_1 x^{2k-2l} vz^l, a_0 b_2 y^{2k-2l} vz^l, b_0 a_1 x^{d-2k+2l} z^{k-l}, b_0 a_2 y^{d-2k+2l} z^{k-l}\]
must appear in $\F = \F_1 \F_2$ with non-zero coefficient, which is plainly not the case. This contradiction implies that $X_p$ is geometrically integral over $\bF_p$ whenever $p$ does not divide the coefficients of $x^d$ and $y^d$. \\ \\ 
Recall the definition of $\pi_X$ (Definition \ref{D3}) from Section \ref{S4}. By the preceding argument, it follows that $\pi_X \leq  \lVert f \rVert$. Let $0 < \ep < 1/2$ be a positive number, and let $\{p_1, p_2, \cdots\}$ be the increasing sequence of consecutive primes larger than $\max\{\lVert f \rVert, \log B\}$ for which
\begin{equation} \label{eight-a} p_1 \cdots p_t < B^{\psi + \ep} \leq p_1 \cdots p_{t+1}.\end{equation}
We now give an estimate for $p_{t+1}$. Let
\[\theta(x) = \sum_{p \leq x} \log p, \]
and let us write $\Q_j = p_1 \cdots p_{j}$ for $j = 1,2, \cdots, t+1$, with $\Q_0 = 1$. By the Prime Number Theorem, there exists some absolute constant $C_{16}$ such that
\begin{equation} \label{8E5} p_{t+1} < C_{16} \theta(p_{t+1}) = C_{16} \sum_{p \leq p_{t+1}} \log p, \end{equation}
hence
\begin{align*} 
p_{t+1} - C_{16} \log p_{t+1} & \ll  \sum_{p \leq \max\{\lVert f \rVert, \log B\}} \log p + \sum_{\max\{\lVert f \rVert, \log B\} < p \leq p_{t}} \log p \\ 
& \leq \theta(\log B) + \sum_{p | \pi_X} \log p + \sum_{j=1}^{t} \log p \\
& = \theta(\log B) + \sum_{p | \pi_X} \log p + \log \Q_{t} \\
&\ll \log B + \lVert f \rVert,  \end{align*}
since we know that $p_{t+1} > \max\{\log B, \lVert f \rVert\}$ and therefore we can, by choosing $B$ sufficiently large, make sure that $C_{16} \log p_{t+1} < \frac{1}{2} p_{t+1}.$
Thus, we have
\begin{equation} \label{qt} \Q_{t+1} = O( B^{\psi + \ep} \log B).
\end{equation}
Since the partial derivative
\[\frac{\partial \F}{\partial v} = z^k\]
is only divisible by primes $\gg B^2(\log B)^{-1}$, (\ref{8E5}) implies 
that there is no point $\Bx \in X^\beta(\bQ; \BB)$ which specializes to a singular point on $X_{p_j}$ for $j=1, \cdots, t+1$. Hence, every $\Bx \in X^\beta(\bQ; \BB)$ reduces to a non-singular point on $X_{p_j}$ for every prime $j= 1, \cdots, t+1$. \\ \\
Our goal now is to construct a set of exceptional points $\E$ and a collection of curves $\Gamma$ which cover $X^\beta(\bQ; \BB)$. Consider an irreducible component $\D(\emptyset)$ of $X \cap Y(\emptyset)$. For each point $\Bx \in \D(\emptyset) \cap X^\beta(\bQ; \BB)$, let $P_1(\Bx) = P_1$ be the $\bF_{p_1}$-point on $X_{p_1}$ such that $\Bx \equiv P_1 \pmod{p_1}$. By Theorem \ref{MT3}, there exists a surface $Y(P_1)$ which contains $X^\beta(\bQ; \BB, P_1)$. Thus, there exists an irreducible component $\D_{\Bx}(P_1)$ of $X \cap Y(P_1)$ which contains $\Bx$. If $\D(\emptyset) \ne \D_\Bx(P_1)$, then put $\Bx$ in a set $Z(P_1)$. Repeat this process for each irreducible component $\D$ of $X \cap Y(\emptyset)$, to obtain sets $Z(P_1)$ for each $P_1 \in X_{p_1}$. Note that a surface in $\bP(1,1,2,d-2k)$ of weighted degree $d$ is the quotient of a certain action of a surface of degree $d$ in the straight projective space $\bP^3$, therefore B\'{e}zout's Theorem for straight projective spaces provides an upper bound for the cardinality of the sets $Z(P_1)$. Theorem \ref{MT3} then shows that for each $P_1 \in X_{p_1}$, we have
\[\# Z(P_1) \ll_d  \left(p_1^{-1}B^\psi + \log Bp_1 \right)\left( B^\psi + \log B \right)(\log B)^2.\]
Write 
\[Z(p_1) = \bigcup_{P_1 \in X_{p_1}} Z(P_1).\]
By Lang and Weil's theorem, we have $\# X_{p_1} = O_d(p_1^2)$, where it follows that
\[\# Z(p_1) = O_d \left(p_1^2 \left(p_1^{-1}B^{2\psi} + \log Bp_1 \right)(\log B)^2 \right) = O_d\left(B^{2\psi}(\log B)^5 \right).\]
What remains are irreducible components $\C$ of $X \cap Y(\emptyset)$ which are also irreducible components of $X \cap Y(P_1)$ for some $P_1 \in X_{p_1}$. Call this collection of curves $\Gamma^{(1)}$. For each surface $Y(P_1)$, suppose that $G_{P_1}$ is a primitive form which defines $Y(P_1)$. Then, from (\ref{8E1}) we see that we can substitute $v = f(x,y)/z^k$ into $G_{P_1}$ to obtain
\begin{equation} \label{curve sub} G_{P_1}(x,y,v,z) = G_{P_1} \left(x,y,\frac{f(x,y)}{z^k}, z\right).\end{equation}

If $G_{P_1}(x,y,v,z)$ has a $v$ term, then we may replace the $v$'s with $f(x,y)/z^k$ to obtain a form over $\bP(1,1,2)$. If $G$ does not have a term containing $v$, then no substitution is necessary and we again obtain a form over $\bP(1,1,2)$. Since $G_{P_1}$ is weighted homogeneous with respect to $(1,1,d-2k,2)$, it follows that each monomial that appears in $G_{P_1}$ with a non-zero coefficient has the same weighted degree $l$ with respect to the weight vector $(1,1,d-2k, 2)$. Consider a monomial $x^{\alpha_1}y^{\alpha_2}v^{\alpha_3} z^{\alpha_4}$ that appears in $G_{P_1}$ with non-zero coefficient. After the substitution, we obtain
\[ x^{\alpha_1} y^{\alpha_2} \left(\frac{f(x,y)}{z^k}\right)^{\alpha_3} z^{\alpha_4}.\]
Expanding $f(x,y)$ and recalling that $f$ is a binary form of degree $d$, it follows that each monomial which appears in the expansion $f(x,y)^{\alpha_3}$ has degree $d \alpha_3$.  Now, we multiply by a large power of $z$, say $z^L$, so that 
\[z^L G_{P_1} \left(x,y,\frac{f(x,y)}{z^k}, z\right)\]
is a polynomial in $x,y,z$. Each monomial that appears in $z^L x^{\alpha_1} y^{\alpha_2} (f(x,y)/z^k)^{\alpha_3} z^{\alpha_4}$ has weighted degree
\[2L + \alpha_1 + \alpha_2 + d\alpha_3 - 2k \alpha_3 + 2 \alpha_4 = 2L + l,\]
so $z^L G_{P_1}(x,y, f(x,y)/z^k, z)$ is a polynomial over $\bP(1,1,2)$. Further, if we choose $L$ to be minimal, then $L \leq kl$. We call the new polynomial $\G_{P_1}(x,y,z)$. It is now clear that the degree of $\G_{P_1}$ is at most $2kl + l = l(2k+1)$, and thus Theorem \ref{MT3} implies
\begin{equation}\label{curve degree bd} \deg \C = \deg \G_{P_1} = O_d\left(\left(p_1^{-1}B^{\psi} \log B + \log Bp_1 \right)\right)  \end{equation}
for each $\C \in \Gamma^{(1)}$. Observe that $\Gamma^{(1)}$ is a collection of irreducible components of $X \cap Y(\emptyset)$, hence
\[\# \Gamma^{(1)} = O_d\left(B^{\psi}\log B \right).\]
We have thus obtained a relatively small set of points $Z(p_1)$ and a collection of curves $\Gamma^{(1)}$ which together cover $X^\beta(\bQ; \BB)$. Moreover, the curves in $\Gamma^{(1)}$ now have degrees bounded as in (\ref{curve degree bd}) and the number of curves in $\Gamma^{(1)}$ is bounded above by the degree of $Y(\emptyset)$. We can continue this process to continue to separate points in $X^\beta(\bQ;\BB)$ into an exceptional set or onto a curve of relatively small degree. \\ \\
Suppose we have obtained $Z(\Q_i)$ for $1 \leq i \leq j$ up to some positive integer $j$. In particular, $Z(\Q_i)$ is the set of points $\Bx \in X^\beta(\bQ;\BB)$ such that $\Bx \not \in Z(\Q_{i-1})$ and $\D(\emptyset) \ne \D_{\Bx}(P_1, \cdots, P_i)$. Notice that
\[\# Z(\Q_i) = O_d \left(\Q_i^2 (p_i^{-1} \Q_{i-1}^{-2} B^{2\psi} + \log B \Q_i) (\log B)^2 \right).\]
Similarly, suppose we have obtained $\Gamma^{(i)}$, $1 \leq i \leq j$, where $\Gamma^{(i)}$ is the set of curves $\C$ of degree
\[O_d\left(\Q_i^{-1} B^\psi \log B + \log B\Q_i \right),\]
such that $\C \in \Gamma^{(i-1)}$ and 
\[\C = \D(\emptyset) = \D_{\Bx}(P_1, \cdots, P_i)\]
for some $(P_1, \cdots, P_i)$. Observe that we have
\[X^\beta(\bQ; \BB) \subset \bigcup_{\C \in \Gamma^{(i)}} \C \cup Z(\Q_i).\]
We now construct $Z(\Q_{j+1})$ given $Z(\bQ_j)$. Consider an irreducible curve $\C \in \Gamma^{(j)}$. For each point $\Bx \in \C \cap (X^\beta(\bQ; \BB) \setminus Z(\Q_j))$, we have
\[\D(\emptyset) = \D_\Bx(P_1) = \D_\Bx(P_1,P_2) = \cdots = \D_\Bx(P_1, \cdots, P_j) = \C.\]
There exists a point $P_{j+1} = P_{j+1}(\Bx) \in X_{p_{j+1}}$ such that $\Bx \equiv P_{j+1} \pmod{p_{j+1}}$. Hence, by Theorem \ref{MT3}, there exists a surface $Y(P_1, \cdots, P_{j+1})$ such that $\Bx \in X \cap Y(P_1, \cdots, P_{j+1})$, and
\[\deg Y(P_1, \cdots, P_{j+1}) = O_d \left(\Q_{j+1}^{-1} B^{\psi + \ep} + \log B\Q_{j+1}  \right).\]
Set $\D_\Bx(P_1, \cdots, P_{j+1})$ to be an irreducible component of $X \cap Y(P_1, \cdots, P_{j+1})$ which contains $\Bx$. Put $\Bx$ in the set $Z(P_1, \cdots, P_{j+1})$ if 
\[\D_\Bx(P_1, \cdots, P_j) \ne \D_\Bx(P_1, \cdots, P_{j+1}),\]
then repeat this process for every point $\Bx \in \C \cap (X^\beta(\bQ; \BB) \setminus Z(\Q_j))$ and for every curve in $\Gamma^{(j)}$ to obtain our sets $Z(P_1, \cdots, P_{j+1})$ for $P_i \in X_{p_i}, i = 1, \cdots, j+1$. By B\'{e}zout's theorem, we have

\begin{align} \label{ZP} \ \# Z(P_1, \cdots, P_{j+1}) & = O_d\left(\deg Y(P_1, \cdots, P_j) \deg Y(P_1, \cdots, P_{j+1} \right) \\ 
& = O_d\left(\Q_j^{-1} \Q_{j+1}^{-1} B^{2 \psi} + (\Q_j^{-1} + \Q_{j+1}^{-1}) B^\psi \log B \Q_{j+1} + \log^2 B \Q_{j+1} \right) \notag \\
& = O_d\left(\Q_{j+1}^{-2} B^{2\psi} \log B + \log^2 B \Q_{j+1} \right) \notag
\end{align}
Write $Z(\Q_{j+1})$ as
\[Z(\Q_{j+1}) = \bigcup_{\substack{P_i \in X_{p_i} \\ 1 \leq i \leq j+1}} Z(P_1, \cdots, P_{j+1}).\]
By Lemma \ref{6Lem2}, we have
\[\# X_{p_j} = p_j^2 + O(d^2 p_j^{3/2}) + O_d(p)\]
for $j = 1, \cdots, t+1$. We write this as
\[\# X_{p_j}/p_j^2 = 1 + O(d^2 p_j^{-1/2}) + O_d(p^{-1}). \]
Therefore, for some number $C_{17}(d) > 0$ depending on $d$, we have
\[\prod_{i=1}^{j+1} \frac{\# X_{p_i}}{p_i^2} \leq \left(\prod_{i=1}^{j+1} \left(1 + p_i^{-1/2}\right) \right)^{C_{17}(d)},\]
hence
\[\prod_{i=1}^{j+1} \# X_{p_i} \leq \Q_{j+1}^2 \left( \prod_{i=1}^{j+1} \left(1 + p_i^{-1/2}\right) \right)^{C_{17}(d)}.\]
Since $\Q_{t+1} = p_1 \cdots p_{t+1} \ll B^{\psi+\ep} \log B$ and $p_i \geq \log B$, there exists a positive number $C_{18}(d)$ such that 
\begin{equation} \label{8E12} t \leq \frac{C_{18}(d) \log B}{\log \log B}.\end{equation}
We now use the inequality
\[1 + \upsilon \leq e^\upsilon\]
which is valid for all $\upsilon \geq 0$, to obtain
\[\prod_{i=1}^{j+1} (1 + p_i^{-1/2}) \leq \prod_{i=1}^{j+1} \exp\left(p_i^{-1/2} \right).\]
Noting that $p_i \geq \log B$ for $i = 1, \cdots, j+1$, it follows that
\[ \prod_{i=1}^{j+1} (1 + p_i^{-1/2}) \leq \exp\left((j+1) (\log B)^{-1/2} \right).\]
Hence, by (\ref{8E12}), we have
\[ \prod_{i=1}^{j+1} (1 + p_i^{-1/2}) \leq \exp\left( \frac{C_{18}(d)(\log B)^{1/2}}{\log \log B} \right),\]
so we obtain
\begin{equation} \label{Weilsub} \prod_{i=1}^{j+1} \# X_{p_i} \leq \Q_{j+1}^2 \exp\left(\frac{C_{19}(d) (\log B)^{1/2}}{\log \log B} \right),
\end{equation}
where $C_{19}(d) = C_{17}(d) C_{18}(d)$. By (\ref{ZP}), (\ref{Weilsub}), and Theorem \ref{MT3}, it follows that:
\begin{equation} \label{exception} \#Z(\Q_{j+1}) = O_d \left( \left(B^{2\psi} + \Q_{j+1}^2 \log^2 B\Q_{j+1}\right) \exp\left(\frac{C_{19}(d) (\log B)^{1/2}}{\log \log B} \right)\right).
\end{equation}
We write $\Gamma^{(j+1)}$ to be the set of irreducible curves $\C \in \Gamma^{(j)}$ which are common irreducible components of $X \cap Y(P_1, \cdots, P_j)$ and $X \cap Y(P_1, \cdots, P_{j+1})$. For each curve $\C \in \Gamma^{(j+1)}$, we have
\[\deg \C = O_d\left(\Q_{j+1}^{-1}B^{\psi} \log B + \log B\Q_{j+1} \right).\]
By (\ref{qt}) and (\ref{exception}), we see that
\begin{equation} \label{8E13}  \#Z(\Q_{t+1}) = O_{d,\ep} \left( B^{2 \psi + \ep} (\log B)^2 \exp\left(\frac{C_{19}(d) (\log B)^{1/2}}{\log \log B} \right)  \right) \end{equation}
We write $\Gamma = \Gamma^{(t+1)}$. If $\C \in \Gamma$, then the hypothesis of the first half of Theorem \ref{MT3} applies, whence
\[\deg \C = O_{d,\ep}( 1).\]
We put the sets $Z(\Q_1), \cdots Z(\Q_{t+1}) $ together to form the exceptional set:
\[ \E = \bigcup_{j=1}^{t+1} Z(\Q_j).\]
Then (\ref{8E12}) and (\ref{8E13}) imply that:
\begin{equation} \label{except} \# \E = O_d\left(B^{2\psi + \ep} \exp\left((\log B )^{1/2}/\log \log B \right)(\log B)^3 (\log \log B)^{-1}\right).\end{equation}
We now turn our attention to the set $\Gamma$. Since $\# \Gamma$ does not exceed the number of irreducible components of $X \cap Y(\emptyset)$, it follows from (\ref{yempty}) that
\begin{equation} \label{gam} \#\Gamma = O_{d,\ep}\left(B^{\psi + \ep}\right).\end{equation}
By construction, it follows that
\begin{equation} \label{rf} R(f; B^\beta, B) \leq \# \E + \# \bigcup_{\C \in \Gamma} \C(\bQ; \BB).
\end{equation}
For $\C \in \Gamma$, $\C$ is a component of $Y(P_1, \cdots, P_{t+1})$ for some $(P_1, \cdots, P_{t+1})$. Moreover, since $\Q_{t+1} = p_1 \cdots p_{t+1}$ satisfies the hypothesis of part (a) of Theorem \ref{MT3}, it follows from B\'{e}zout's theorem that
\begin{equation} \label{degc} \deg \C \leq \deg X \cdot \deg Y(P_1, \cdots, P_{t+1}) = O_{d,\ep}(1).\end{equation}
Let $G^\ast$ be a primitive form which defines $Y(P_1, \cdots, P_{t+1})$. By (\ref{eight-a}) and case a) of Theorem \ref{MT3}, we also have
\[\log \lVert G^\ast \rVert = \log (H(Y(P_1, \cdots, P_{t+1}))) = O_{d,\ep}\left(\log B \right).\]
By following the same substitution as in (\ref{curve sub}), we obtain a form over $\bP(1,1,2)$ by substituting (\ref{8E1}) into $G^\ast$. We call the new polynomial $G(x,y,z)$. Observe that 
\[\log \lVert G \rVert = \log \lVert G^\ast \rVert + O_{d} \left(l \log \lVert f \rVert \right).\]
We may now suppose that $B$ is chosen sufficiently large so that $\log \lVert f \rVert < \log B$. Then we obtain
\begin{equation} \label{height-g} \log \lVert G \rVert = O_{d,\ep}\left( \log B \right).
\end{equation}
Note that the curve $\C$ corresponds naturally to a component $\C'$ of the curve $G(x,y,z) = 0$. If $\C'$ is reducible, we consider each irreducible component separately, noting that there are at most $O_{d,\ep}(1)$ components by B\'{e}zout's theorem and (\ref{degc}). Thus, we may consider each irreducible component $\C''$ of $\C'$. There are two situations. First, $\C''$ may be irreducible over $\bQ$, but reducible over $\ol{\bQ}$. In this case, the rational points on $\C''$ are preserved under the all elements of $\Gal(\ol{\bQ}/\bQ)$, but $\C''$ has a conjugate which is also a component of $\C'$, whence $\C''(\bQ)$ corresponds to the rational points in the intersection of two curves each of degree $O_{d,\ep}(1)$; so by B\'{e}zout's theorem, it follows that
\[\# \C''(\bQ) = O_{d,\ep}(1).\]
We suppose now that $\C$ corresponds to a $\bQ$-defined and geometrically integral component of $G$, which we call $\G$. Hence we have
\[\C \leftrightarrow \G(x,y,z) = 0. \]
By Proposition B.7.3 in \cite{<HS>} and (\ref{height-g}), we have
\[\log \lVert \G \rVert = O_{d,\ep}\left(\log B \right).\]
We write
\[\G(x,y,z) = G_1(x,y) + zG_2(x,y,z), \]
where $G_1(x,y)$ consists of all monomials in $\G$ which only contains $x$ and $y$. Observe that since $\G \in \bP(1,1,2)$ that $G_1$ is homogeneous in $x$ and $y$. We then consider several situations. \\ \\
Let $\Gamma_1$ denote the set of curves $\C \in \Gamma$ such that $f(x,y)$ and $G_1(x,y)$ are coprime. If $xy = 0$, say $y = 0$, then 
\[f(x,0) = a_d x^d = vz^k.\]
Since we have assumed that $z$ is a prime by (\ref{8E0}), it follows that we must have $z | a_d x^d$. However, since we assumed that $z \gg B^2 (\log B)^{-1}$ and $B > \lVert f \rVert$, this is not possible. It follows that no point with $xy = 0$ can lie in $X^\beta(\bQ; \BB)$. Write $f(x,y) = y^d f(x/y, 1)$ and $G_1(x,y) = y^{\deg(G_1)} G_1(x/y, 1)$. Further, write $h(x) = f(x,1)$ and $g(x) = G_1(x, 1)$. There exist polynomials $a(x), b(x) \in \bZ[x]$ and such that 
\[ a(x)h(x) + b(x)g(x) = \operatorname{Res}(h,g),\]
where $\operatorname{Res}(h,g)$ is the resultant of $h$ and $g$, see \cite{<CLO>}. Homogenizing the equation, we obtain
\[ a'(x,y)f(x,y) + b'(x,y)G_1(x,y) = ny^e,\]
where $e$ is the least positive integer such that the left hand side is a binary form. \\ \\
Since $z | G_1(x,y)$ and $z | f(x,y)$, it follows that $z | ny^e$. However, recall from Section \ref{S8} that $z$ is a prime not smaller than $B^{2}(\log B)^{-1}$, and since $y \in [1, B]$, it follows that $z | n$. The resultant $\operatorname{Res}(h,g)$ is bounded by 
\[|\operatorname{Res}(h,g)| \leq (d + \deg \G+1)! (\lVert f \rVert \cdot \lVert \G \rVert)^{d + \deg \G + 1}. \] 
Hence, the number of prime divisors dividing $n$ of size at least $B^2 (\log B)^{-1}$ is at most
\begin{equation} \label{height} O\left(\frac{\deg \G \log \lVert \G \rVert}{\log B } \right) = O_{d,\ep} \left( 1 \right).\end{equation}
We can now argue as in Greaves \cite{<Gre>}. By (\ref{8E0}), we have that $z$ is in fact a prime. Thus, there are at most $d$ solutions to the congruence
\[f(\omega, 1) \equiv 0 \pmod{z}.\]
By (\ref{BFE}), we have
\[f(x,y) \equiv 0 \pmod{z},\]
and since $xy \not \equiv 0 \pmod{z}$, there exists $\omega \ne 0$ such that $x \equiv \omega y \pmod{z}$. For each such $\omega$, Lemma 1 in Greaves \cite{<Gre>} gives that there are at most
\[\frac{B^2}{z} + O(B) = O(B)\]
such solutions. Thus for each $z$, there are at most $d \cdot O(B) = O_d(B)$ many points in $X^\beta (\bQ; \BB)$ corresponding to a point on a curve $\C \in \Gamma_1$. Since there are $O_\ep(B^\ep)$ choices for $z$ and 
\[O_{d, \ep}\left(B^{\psi + \ep}\right) \]
choices for $\C \in \Gamma_1$, it follows that 
\begin{equation} \label{estimate1} \#X^\beta(\bQ; \BB) \cap \bigcup_{\C \in \Gamma_1} \C = O_{d, \ep}\left( B^{\psi + 1 + \ep}\right). \end{equation}
Next, consider the curves $\Gamma_2 \subset \Gamma$ consisting of those $\C \in \Gamma$ such that $f(x,y), G_1(x,y)$ are not co-prime. As we have chosen $f$ to be irreducible, this implies that $f(x,y)$ divides $G_1(x,y)$. By our choice of $\G$, the degree of $\G$ is at least $d$ and at most $O_{d,\ep}(1)$. We write $l = \deg \C = \deg \G$. We calculate the corresponding quantities $a_x, a_y, a_z$ with respect to the monomial ordering $<$. Suppose that $x^{\alpha_x} y^{\alpha_y} z^{\alpha_z}$ is the leading monomial in $\G$ with respect to reverse lexicographic ordering. In particular, we must have
\[\alpha_x + \alpha_y + 2 \alpha_z = l,\]
since $\G$ is a polynomial over $\bP(1,1,2)$ of weighted degree $l$. Further, we have
\[a_x = \frac{l - \alpha_x}{2l},\]
\[a_y = \frac{l - \alpha_y}{2l},\]
and
\[a_z = \frac{l - 2 \alpha_z}{4l}.\]
Hence,
\begin{align*}B_x^{a_x} B_y^{a_y} B_z^{a_z} & = B^{\frac{1}{4}\left(\frac{(4 + \beta)l - 2\alpha_x - 2\alpha_y - 2 \beta \alpha_z}{l} \right)}\\
& = B^{\frac{1}{4} \left(\frac{(2 + \beta) l + 2\alpha_z(2 - \beta) }{l} \right)}.
\end{align*}
Write
\begin{equation} \label{bigpsi} \Psi = \frac{(2 + \beta) l + 2\alpha_z(2 - \beta) }{l^2}.
\end{equation}
Observe that the $W$ in Theorem \ref{MT3} corresponds to the quantity $B^\Psi$. \\ \\
Now we argue as in \cite{<HB1>}. If $\Bx \in \bP(1,1,2)$ is a singular point on $\C$, then $\Bx$ is a common zero of $\G$ and $\displaystyle \frac{\partial \G}{\partial x}$, hence $\Bx$ lies on the intersection 
\[\C \cap \C',\]
where $\C'$ is the zero-locus of $\displaystyle \frac{\partial \G}{\partial x}$. By B\'{e}zout's theorem, the number of singular points on $\C$ is at most
\begin{equation} \label{singularc} O_{d,\ep}\left(1 \right).\end{equation}
It remains to consider non-singular points on $\C$. Suppose $\Bz \in \C^\beta(\bQ; \BB)$ is non-singular, but reduces to a singular point modulo $p$ for some prime $p$. Then, we must have $p$ divides
\[ \frac{\partial \G}{\partial x} (\Bz), \frac{\partial \G}{\partial y} (\Bz), \frac{\partial \G}{\partial z} (\Bz).\]
However, $\Bz$ is non-singular, so one of the partial derivatives above is non-zero. We may suppose, as we may, that $\displaystyle \frac{\partial \G}{\partial x}(\Bz) \ne 0$. Since
\[\left \lvert \frac{\partial \G}{\partial x}(\Bz) \right \rvert \ll_d l \lVert \G \rVert B^{l - 1},\]
it follows that 
\[\# \left \{p > B^{\Psi + \ep} : p | \frac{\partial \G}{\partial x} (\Bz) \right \} \ll_{d,\ep} \frac{l \log (\lVert \G \rVert B)}{\log 2 + \Psi \log B}. \]
Choose $C_{20}(d,\ep)$ to be a number which depends on $d, \ep$ and gives an upper bound for the inequality above. Now set 
\[n = \left \lceil \frac{C_{20}(d,\ep) l \log (\lVert \G \rVert B)}{\log 2 + \Psi \log B}\right \rceil \ll \log B,\]
where the implied constant is absolute, and
\[q_1 < \cdots < q_n\]
to be the first $n$ primes larger than $B^{\Psi + \ep}$. Then there exists $j$ with $1 \leq j \leq n$ such that $q_j \nmid \displaystyle \frac{\partial \G}{\partial x}(\Bz)$, so $\Bz$ will reduce to a non-singular point on $\C_{q_j}$. By Theorem \ref{MT3} and the theorem of Lang-Weil \cite{<LW>}, there exist 
\[R = O(nq_n) \ll_{d,\ep}  B^{\Psi + \ep} \]
forms $\G_1, \cdots, \G_{R}$ of degree $O_{l,\ep}(1) = O_{d,\ep}(1)$, defining curves $\Y_1, \cdots, \Y_{R}$, such that $\C \not \subset \Y_j$ for $j = 1, \cdots, R$, and
\[\C_{\text{non-singular}}^\beta(\bQ; \BB) \subset \bigcup_{j=1}^R \Y_j.\]
By B\'{e}zout's Theorem, (\ref{singularc}), and Theorem \ref{MT3}, we have the bound
\[ \# \C^\beta(\bQ; \BB) = O_{d,\ep} \left( B^{\frac{2 + \beta}{l} + \frac{2\alpha_z(2 - \beta)}{l^2} + \ep}  \right).\]
Further, we have
\[2 - \frac{\log \log B}{\log B} < \beta \leq \frac{d}{k} + \frac{\log (\lVert f \rVert d)}{\log B}.\] 
If $\beta \geq 2$, then certainly
\[2 - \beta \leq 0,\]
hence
\[2\alpha_z(2 - \beta) \leq 0, \]
so we obtain the upper bound
\[\# \C^\beta(\bQ; \BB) = O_{d,\ep}\left(B^{\frac{2 + \beta}{l} + \ep} \right).\]
and if $\beta \leq 2$, then 
\[0 \leq 2 - \beta < \frac{\log \log B}{\log B}.\]
Therefore, we obtain
\begin{align*} B^{\frac{2\alpha_z (2 - \beta)}{\fd^2}} & \leq B^{\frac{2 \alpha_z( \log \log B/\log B)}{l^2}} \\
& = (\log B)^{\frac{2\alpha_z}{l^2}} \\
& \leq (\log B)^{\frac{1}{l}},   
\end{align*}
as $2\alpha_z \leq l$. This again implies that
\[\# \C^\beta(\bQ; \BB) = O_{d,\ep}\left(B^{\frac{2 + \beta}{l} + \ep} \right).\]
Since $l \geq d$ and $\beta \leq d/k + \log(d \lVert f \rVert)/\log B$, it follows that
\[\#\C^\beta(\bQ; \BB) = O_{d,\ep} \left(B^{\frac{2}{d} + \frac{1}{k} + \ep} \right). \] 
Since $k \geq 2$, it follows that
\[\#\C^\beta(\bQ; \BB) = O_{d,\ep} \left(B^{\frac{2}{d} + \frac{1}{2} + \ep} \right). \] 
Combining these estimates, we obtain
\begin{equation} \label{estimate2} \# X^\beta(\bQ; \BB) \cap \bigcup_{\C \in \Gamma_2} \C =  O_{d, \ep}\left( B^{\psi}B^{\frac{2}{d} + \frac{1}{2} + \ep}\right).\end{equation}
By (\ref{except}), (\ref{gam}), (\ref{estimate1}), and (\ref{estimate2}), we have
\begin{equation} \label{xbetasize} 
\# X^\beta(\bQ;\BB) = O_{d,\ep}\left(B^{2\psi + \ep} + B^{1 + \psi + \ep} + B^{\psi + \frac{2}{d} + \frac{1}{2} + \ep} \right).
\end{equation}
Since we may assume $d \geq 6$ by Greaves \cite{<Gre>}, we obtain
\begin{equation} \label{xbetasize2}
\# X^\beta(\bQ;\BB) = O_{d,\ep}\left(B^{2\psi + \ep} + B^{1 + \psi + \ep}\right).
\end{equation}
It remains to show that if $k/d > 7/18$ and $\beta$ is in the range

\[2 - \frac{\log \log B}{\log B} < \beta \leq \frac{d}{k} + \frac{\log(\lVert f \rVert d)}{\log B},\]
then one can choose $\ep$ so that $\psi < 1$. Let us analyze the expression 
\begin{equation} \label{8E10}\frac{d - k\beta}{d-2k} + \frac{\beta}{2}\end{equation}
as a function of $\beta$. Its derivative is given by
\[\frac{-k}{d-2k} + \frac{1}{2} = \frac{d - 4k}{2(d-2k)},\]
which is negative whenever $k/d > 1/4$. Therefore, by (\ref{7E2}), (\ref{8E10}) viewed as a function of $\beta$, is decreasing. Thus, for any 
\[0 < \eta < \frac{\log \log B}{\log B} \]
with 
\[2 - \eta < \beta \leq 2,\]
we have
\begin{align*}\frac{d - k\beta}{d-2k} + \frac{\beta}{2} & \leq \frac{d - k(2 - \eta)}{d-2k} + \frac{2}{2}\\ 
& = 2 + \frac{k \eta}{d-2k} \\ 
& \leq 2 + \frac{k \log \log B}{(d-2k)\log B}.
\end{align*}
Choose $B$ sufficiently large so that
\begin{equation} \label{8E11} \left(\frac{2(d-2k)}{d}\right)^{1/2}\frac{k \log \log B}{(d-2k)\log B} < \ep.\end{equation}
Let $\lambda = k/d$. Then, by (\ref{8E11}), we have
\[\frac{1}{2} \left(2(1 - 2\lambda \right)^{1/2}\left(1 + \frac{d-2k}{d-2k} + \frac{2}{2}\right) + \ep = \frac{3}{\sqrt{2}} \sqrt{1 - 2\lambda} + \ep.\]
To ensure that $\psi < 1$, we are left to consider the inequality
\[\frac{3 \sqrt{1 - 2 \lambda}}{\sqrt{2}} < 1. \]
This is equivalent to 
\[1 - 2 \lambda < \frac{2}{9},\]
which gives
\[\lambda > \frac{7}{18}.\]
Thus, whenever $k/d > 7/18$ and $\ep$ is sufficiently close to zero, we have $\psi < 1$. This completes the proof of Theorem \ref{MT1}, by virtue of (\ref{7E2}).

\section{Another proof of Browning's theorem}
\label{S10}

In this section, we give another proof of Browning's theorem in \cite{<B2>}. It illustrates the differences between our approaches to the determinant method. In \cite{<B2>}, Browning combined elements of the ``affine determinant method" introduced by Heath-Brown in \cite{<HB2>} and Salberger's global determinant method in \cite{<S2>} to prove his result, which is stated below as Theorem \ref{BThm}. Heath-Brown had already shown in \cite{<HB2>} that his affine determinant method could be applied to study integral points on the variety defined by
$$\displaystyle f(x) = yz^k,$$
where $f(x)$ is a polynomial with integral coefficients of degree $d$. More specifically, for irreducible $f(x) \in \bZ[x]$ of degree $d$ with no fixed $k$-th power divisor, Heath-Brown proved that $f$ takes on infinitely many $k$-free values whenever $k \geq (3d+2)/4$. Browning improved on this slightly by showing that Salberger's arguments in \cite{<S2>} can be adopted to augment the affine determinant method to sharpen the above result to $k \geq (3d+1)/4$. \\ \\
We show that our version of the determinant method, detailed in Sections \ref{S2} to \ref{S9}, can also be used to obtain the same result. It is interesting that these two different versions of the determinant method lead to the same conclusion. \\ \\
For convenience, we state Browning's theorem again:
\begin{theorem} (Browning, 2011) \label{BThm} Let $f(x) \in \bZ[x]$ be an irreducible polynomial of degree $d \geq 3$. Suppose that $k \geq (3d+1)/4$. Then, we have
$$\#\{n \in \bZ \cap [1,B]: f(n) \text{ is }k\text{-free}\} \sim c_{f,k} B$$
as $B \rightarrow \infty$, where $c_{f,k}$ is defined as in equation (\ref{E2}). 
\end{theorem}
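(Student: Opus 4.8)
The plan is to transcribe the arguments of Sections~\ref{S8} and~\ref{S9} essentially verbatim, replacing the binary form there by the homogenisation of $f$ and contracting one of the boxes to a point. As in Section~\ref{S8}, write $N_{f,k}(B)=\sum_{b^k\mid f(n),\,1\le n\le B}\mu(b)$ and split according to the largest prime $p$ with $p^k\mid f(n)$, choosing $\xi=\tfrac1{2k}\log B$ and a cut-off $Y=B(\log B)^{-1}$; this produces $M_1(B)\ge M_2(B),M_3(B)$ exactly as in that section, now in one variable. Since $f$ is irreducible, hence separable, $\rho_f(p^k)=\rho_f(p)\le d$ for every $p\nmid\operatorname{disc}(f)$, so the classical device gives
\[ M_1(B)=B\prod_{p\le\xi}\Bigl(1-\frac{\rho_f(p^k)}{p^k}\Bigr)+O_{f,\ep}\bigl(B^{(1+\ep)/k}\bigr)=c_{f,k}B+o(B), \]
using that the partial product converges to $c_{f,k}$ with error $O((\log B)^{-(k-1)})$, while for $k\ge 2$ the middle range is negligible, $M_2(B)\ll dB\sum_{p>\xi}p^{-k}+d\,\pi(Y)\ll B(\log B)^{-1}$. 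Everything thus reduces to showing $M_3(B)=o(B)$.

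For this, and exactly as in Section~\ref{S9}, it suffices to treat a single irreducible factor of maximal degree, still written $f$ of degree $d$, and after a dyadic decomposition $H/2<z\le H$ with $H=B^\beta$ and $1-\tfrac{\log\log B}{\log B}<\beta\le\tfrac dk+\tfrac{\log(d\lVert f\rVert)}{\log B}$, to bound the number $R(f;H,B)$ of integer solutions of $f(n)=vz^k$ with $1\le n\le B$, $z\sim H$ prime, $0\ne v\ll B^dH^{-k}$. I would set $\widehat f(x_0,x_1)=x_1^df(x_0/x_1)$, an irreducible binary form of degree $d$, and take $X\subset\bP(1,1,d-k,1)$ to be the weighted projective surface $\widehat f(x_0,x_1)=vz^k$ --- the weight vector being legitimate since $d-k>0$ --- and apply Theorem~\ref{MT3} with $\Bw=(1,1,d-k,1)$ and box $\BB=(B,1,B^{d-k\beta},B^\beta)$, the entry $1$ for $x_1$ expressing that we want only the fibre $x_1=1$ (so that $\gcd(x_0,x_1)=1$ is automatic). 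The leading monomial of $\widehat f-vz^k$ in the grading of Section~\ref{S2} is $vz^k$, so Proposition~\ref{2P2} gives $a_{x_0}=a_{x_1}=\tfrac13$, $a_v=\tfrac{k}{3d(d-k)}$, $a_z=\tfrac{d-k}{3d}$, and a direct computation (with $B_{x_1}=1$) yields $W=B^{\psi}$ with
\[ \psi=\frac{d+\beta(d-2k)}{2\sqrt{d(d-k)}}. \]
As in Section~\ref{S9} one checks that $X$ is geometrically integral, that $X_p$ is geometrically integral for all but the $O_f(1)$ primes dividing the leading coefficients of $f$ (so $\pi_X=O_f(1)<\log B$ for $B$ large), and --- by Euler's relation together with the irreducibility of $f$, and since $\partial\F/\partial v=-z^k$ with $z>B(\log B)^{-1}>2B^\psi$ --- that every point of $X^\beta(\bQ;\BB)$ reduces to a non-singular point modulo each prime $p>\log B$.

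Next I would run the global determinant iteration of Section~\ref{S9} without change: starting from the hypersurface $Y(\emptyset)$ of degree $O_{d,\ep}(B^{\psi+\ep})$ from Theorem~\ref{MT3}(b) and iterating over the consecutive primes $\log B<p_1<p_2<\cdots$ with $p_1\cdots p_t<2B^\psi\le p_1\cdots p_{t+1}$ (so $t\ll\log B/\log\log B$), one obtains a family $\Gamma$ of $O_{d,\ep}(B^{\psi+\ep})$ curves of degree $O_d(\log B)$ and an exceptional set $\E$ with $\#\E=O_{d,\ep}(B^{2\psi+\ep})$ covering all relevant solutions. On a curve $\C\in\Gamma$ substitute $v=f(x)/z^k$ and clear powers of $z$ to get a plane curve $\mathfrak G(x,z)=0$ of degree $O_d(\log B)$ and height $(\log B)^{O(1)}$, and write $\mathfrak G=G_1(x)+z(\cdots)$. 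If $\gcd(f,G_1)=1$, the resultant argument of Section~\ref{S9} forces $z$ to divide a fixed non-zero integer of size $B^{O(1)}$, leaving $(\log B)^{O(1)}$ admissible primes $z$, and for each the congruence $f(n)\equiv 0\pmod z$ has at most $d$ roots $n$ in $[1,B]$ (as $z>B(\log B)^{-1}$), so these curves contribute $O_{d,\ep}(B^{\psi+\ep})$ in total. If instead $f\mid G_1$, one bounds the integer points of $\mathfrak G(x,z)=0$ in the box by a second application of the determinant method exactly as in Section~\ref{S9}; assuming $d\ge 6$ (the cases $d\le 5$ being classical by Erd\H{o}s) this contributes $O_{d,\ep}(B^{2\psi+\ep})$. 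Altogether $R(f;B^\beta,B)=O_{d,\ep}(B^{2\psi+\ep})$.

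Finally, since $d-2k<0$ the exponent $\psi=\psi(\beta)$ is decreasing on the admissible interval and so is maximised as $\beta\to(1-o(1))^+$, where $\psi\to\psi(1)=\dfrac{d-k}{\sqrt{d(d-k)}}=\sqrt{1-k/d}<1$; hence $M_3(B)\ll\log B\cdot\sup_\beta R(f;B^\beta,B)=O_{d,\ep}\bigl(B^{2\sqrt{1-k/d}+\ep}\bigr)$, which is $o(B)$ precisely when $2\sqrt{1-k/d}<1$, i.e. $k/d>3/4$, i.e. (for integral $k$) $k\ge(3d+1)/4$; combined with the estimates for $M_1$ and $M_2$ this gives $N_{f,k}(B)\sim c_{f,k}B$. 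The main obstacle is the iteration step: one must reproduce, with the modified weight vector $(1,1,d-k,1)$ and the degenerate side-length $B_{x_1}=1$, the entire bookkeeping of Section~\ref{S9} --- the sizes of the sets $Z(\Q_j)$, the Weil bounds $\#X_{p_j}=p_j^2+O_d(p_j^{3/2})$, and the control on the degrees and heights of the iterated auxiliary curves --- and, in the case $f\mid G_1$, carry out the second-stage determinant argument and check that its contribution, together with those of the coprime curves and of $\E$, stays below $B$ throughout the range of $\beta$; the computation above shows that this happens exactly when $k\ge(3d+1)/4$.
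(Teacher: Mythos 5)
Your proposal reproduces, with a bit more detail, the paper's own argument in Section~\ref{S10}: the same $M_1,M_2,M_3$ decomposition as in Section~\ref{S8}, the same weighted surface $\widehat f(x_0,x_1)=vz^k$ in $\bP(1,1,d-k,1)$, the same application of Theorem~\ref{MT3} with box $(B,1,B^{d-k\beta},B^\beta)$ yielding $W=B^\psi$ with $\psi=\tfrac{d+\beta(d-2k)}{2\sqrt{d(d-k)}}$ (the paper writes the equivalent $\tfrac12\sqrt{(d-k)/d}\,\bigl(\tfrac{d-k\beta}{d-k}+\beta\bigr)$), the same iteration from Section~\ref{S9}, and the same final constraint $2\psi<1\iff k/d>3/4$. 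The only point worth flagging is that you have silently (and correctly) interchanged the $v$ and $z$ entries of the box relative to what appears in the paper's text, which appears to be a typo there.
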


We first establish some preliminaries analogous to Section \ref{S8}. Recall that we stated, in equation (\ref{E3}), the notation $\displaystyle N_{f,k}(B) = \# \{1 \leq x \leq B: f(x) \text{ is } k\text{-free}\}.$ We define 
$$\displaystyle N(f; b, B) = \#\{1 \leq x \leq B: b^k | f(x)\}.$$
From elementary properties of the Mobius function, we have
$$\displaystyle N_{f,k}(B) = \sum_{b=1}^\infty \mu(b)N(f; b,B).$$
We also have the formula
\[ N(f; b, B) = \rho_f(b^k)\left(\frac{B}{b^k} + O(1)\right),\]
where as we recall from Section \ref{S1}, $\rho_f(m)$ counts the number of congruence classes modulo $m$ for which $f$ vanishes modulo $m$. Browning \cite{<B2>} obtains the estimate
$$\displaystyle \rho_f(b^k) = O( b^\ep)$$
whenever $b$ is square-free, and so we obtain
\[ N(f; b, B) = B \frac{\rho_f(b^k)}{b^k} + O(b^\ep).\] 
We thefore conclude that
\begin{equation} N_{f,k}(B) = B \sum_{b \leq B^{1-\delta}} \frac{\mu(b) \rho_f(b^k)}{b^k} + \sum_{b > B^{1-\delta}} \mu(b) N(f; b, B) + o(B),
\end{equation}
where $\delta$ is a small positive constant. \\ \\
Define the quantity 
$$E(\xi) = \#\{x \in \bZ \cap [1,B] : \exists b > \xi \text{ s.t. } b^k | f(x) \text{ and } \mu^2(b) = 1\}$$ 
for any $\xi \geq 1$. Using the assumption of the theorem that $k > 3d/4 \geq 1$, we find
\begin{equation} N_{f,k}(B) = c_{f,k} B + o(B) + O(E(B^{1-\delta})).
\end{equation}
We now proceed with the proof of Theorem \ref{BThm}.
\begin{proof} The discussion above essentially reduced the proof of Theorem \ref{BThm} to obtaining a satisfactory upper bound for the quantity $E(B^{1-\delta})$. We first homogenize our polynomial $f$ to obtain a binary form $F(x,y)$. As in the proof of Theorem \ref{MT1}, we write $H = B^\beta$, where 
\[1 - \delta < \beta \ll d/k.\] 
We then apply Theorem \ref{MT3} with the weight vector $(1,1,d-k,1)$ and the box $\textbf{B} = (B, 1, H, O(B^d/H^k))$ to the variety defined by 
$$\displaystyle X: F(x,y) - vz^k = 0.$$
Note that this is a weighted projective surface. By Theorem \ref{MT3} we obtain that all points counted by $E(B^{1-\delta})$ lie on an auxiliary curve $\C$ of degree
\[ O_{d,\ep}\left(B^{\frac{1}{2} \left(\frac{d-k}{d}\right)^{1/2} \left(\frac{d-k\beta}{d-k} + \beta\right) + \ep} \right),\]
which assumes its maximum value at $\beta = 1 - \delta$. Then, as per our analysis in the binary form case in Section \ref{S9}, we deduce that we can partition $\C \cap X$ into a collection of  
\[ O_{d,\ep}\left(B^{\frac{1}{2} \left(\frac{d-k}{d}\right)^{1/2} \left(\frac{d-k\beta}{d-k} + \beta\right) + \ep} \right)\]
geometrically irreducible curves $\Gamma$, and an exceptional set $\E$ consisting of 
\[ O_{d,\ep}\left(B^{ \left(\frac{d-k}{d}\right)^{1/2} \left(\frac{d-k\beta}{d-k} + \beta\right) + \ep} \right)\]
points. By \cite{<Hoo>}, we may assume that $d \geq 3$, and as we have shown in Section \ref{S9}, the contribution from each irreducible curve $\D \in \Gamma$ is no more than
\[O_{d,\ep} \left(B^{\frac{1}{3} + \ep} \right), \]
hence it suffices to take $d,k$ to satisfy
\[\left(\frac{d-k}{d}\right)^{1/2} \left(\frac{d-k\beta}{d-k} + \beta\right) < 1\]
for $\beta = 1 - \delta$, with $\delta > 0$ approaching zero. This is satisfied when $k/d > 3/4$, which is equivalent to $k \geq (3d+1)/4$. This completes the proof of Theorem \ref{BThm}. 
\end{proof}

\section{Proof of Theorem \ref{MT2}}
\label{S11}

In this section, we give a proof of Theorem \ref{MT2}. Much of the argument remains unchanged from that given in \cite{<ST>}. \\ 

We may assume, as in \cite{<ST>}, that $k \leq D$ since $d$ is at most $D$, and also if an integer is $k$-free it is also $(k+1)$-free. Further, we may assume that the coefficients of $x^D$ and $y^D$ are non-zero, since any binary form $F$ is equivalent under integral unimodular substitutions to a form where the leading $x$ and $y$ coefficients are non-zero. Moreover, unimodular substitutions preserve the discriminant of a binary form. \\

Let $A$ be a positive real number. For any value $0 < \theta \leq 1$ and for any non-zero integer $h$, let us write 
\[ \mathfrak{s}(h) = \prod_{\substack{p \leq A^\theta \\ |h|_p^{-1} \leq A^\theta \\p \nmid \mathfrak{D}}} |h|_p^{-1},\]
where $\mathfrak{D}$ denotes the discriminant of $f$. Write $U$ to be the set of pairs $(a,b) \in \bZ^2$ such that $f(a,b) \ne 0$ and the only primes dividing $\gcd(a,b)$ are those that divide $\mathfrak{D}$. Now, define
$$\displaystyle S(\theta, A) = \prod_{(a,b) \in U} \mathfrak{s}(f(a,b)).$$
One can estimate $S(\theta,A)$ in exactly the same way as in \cite{<ST>} (note that in \cite{<ST>}, they wrote $u$ instead of $A$). In particular, by Section 6 of Stewart-Top \cite{<ST>}, we have the estimate
$$\displaystyle S(\theta, A) \leq A^{5\theta d A^2}.$$
As a consequence, we see that the number of pairs $(a,b) \in U$ such that $|\mathfrak{s}(f(a,b))| \geq A^{1/8}$ is at most $40\theta d A^2$. Now, we may argue as in Lemma 2 of \cite{<EM>} that if $h$ and $b$ are integers such that $|h| \leq A^{1/2}$ and $1 \leq b \leq A$, then there are at most $d$ integers $a$ with $f(a,b) = h$. Hence, the number of pairs of integers $(a,b)$ with $1 \leq a, b \leq A$ and $|f(a,b)| \leq A^{1/2}$ is at most 
$$\displaystyle 3d A^{3/2}.$$
Set $\theta = C_{f,k}/120d$. Define $T$ to be the set of integers $(a,b)$ with $1 \leq a,b \leq A$, $f(a,b)$ is $k$-free, $|f(a,b)| \geq A^{1/2}$, and $\mathfrak{s}(f(a,b)) < A^{1/8}$. By Theorem \ref{MT1} and our choice of $\theta$, we have that there exist constants $C_{22}, C_{23} > 0$, which depend on $f$ and $k$, such that whenever $A > C_{22}$, we have
\begin{equation}\displaystyle \# T > \frac{1}{2} C_{23}A^2. \end{equation}
We invoke the work of Stewart in \cite{<Stew>} on estimating the number of solutions to Thue equations. Recall that for any integer $h$, $\omega(h)$ denotes the number of distinct prime factors of $h$. Let $h$ be an integer for which there exists $(a,b) \in T$ such that
\begin{equation} \label{10E2} \displaystyle f(a,b) = h. \end{equation}
Write $h = \mathfrak{s}(f(a,b)) \cdot g$. Since by assumption we have $\mathfrak{s}(f(a,b)) \leq A^{1/8}$ and $|f(a,b)| \geq A^{1/2}$, it follows that $|\mathfrak{s}(f(a,b))| \leq |h|^{1/4}$ and consequently, $|g| \geq |h|^{3/4}$. If $A$ is chosen to be greater than $|\mathfrak{D}|^{24}$ and $|h| \geq |\mathfrak{D}|^{12}$, then choosing $\ep = 1/12$ and applying Corollary 1 of \cite{<Stew>} we obtain that the number of solutions to equation (\ref{10E2}) is at most
$$\displaystyle 5600 d^{1 + \omega(g)}.$$
Observe that trivially we have the bound
\begin{equation} \label{10E3} \displaystyle |f(a,b)| \leq d \lVert f \rVert A^d. \end{equation}
Note that by construction, the prime divisors of $g$ either divide $\mathfrak{D}$ or satisfy $|f(a,b)|_p^{-1} \geq A^\theta$. Hence, by choosing $A$ so that $A^\theta \geq d \lVert f \rVert$, we have
$$\displaystyle \omega(g) \leq \omega(\mathfrak{D}) + (d+1)/\theta.$$
The second term on the right hand side in the above equation is from the worst case, where each prime $p$ such that $|f(a,b)|_p^{-1} \geq A^\theta$ divides $f(a,b)$ with multiplicity one. If there are more than $(d+1)/\theta$ of such primes, then we will have $|f(a,b)| \geq A^{\theta \cdot (d+1)/\theta} = A^{d+1}$, which yields a contradiction to equation (\ref{10E3}) as we chose $A \geq A^\theta \geq d \lVert f \rVert$. Hence, there exist constants $C_{24}, C_{25}$ such that if $A > C_{24}$, then the number of distinct pairs $(a,b) \in T$ is at least $C_{25} A^2$. \\ \\
To finish the proof of the theorem, let $B$ be a real number with $B > d \lVert f \rVert C_{24}^d$ and write $A = (B/d \lVert f \rVert)^{1/d}$. Note that $A > C_{24}$. With this choice of $A$, we have that whenever $(a,b) \in T$, we have $|f(a,b)| \leq B$. Hence,
$$\displaystyle R_k(B) \geq \# T \geq C_{25} (B/d \lVert f \rVert)^{2/d},$$
which completes the proof of Theorem \ref{MT2}.

\newpage

\end{document}